\numberwithin{equation}{section}
\newtheorem{theorem}{Theorem}[section]
\newtheorem{lemma}[theorem]{Lemma}
\newtheorem{cor}[theorem]{Corollary}
\theoremstyle{definition}
\newtheorem{remark}[theorem]{Remark}
\newcommand{\Mm}{\mathcal{M}}
\newcommand{\Rn}{\mathbb{R}^n}
\newcommand{\Z}{\mathbb{Z}}
\newcommand{\supp}{{\rm supp}\,}
\newcommand{\f}{\frac}
\newcommand{\vc}{\infty}
\def\Z{\mathbb{Z}}
\def\Rn{\mathbb{R}^n}
\def\S{\mathcal{S}}
\def\Rnm{\mathbb{R}^{nm}}
\def\Rt{\mathbb{R}^{2n}}
\def\supp{\operatorname{supp}}
\def\dist{\operatorname{dist}}
\def\XXint#1#2#3{{\setbox0=\hbox{$#1{#2#3}{\int}$}
\vcenter{\hbox{$#2#3$}}\kern-.5\wd0}}
\begin{document}

\title[On the weak boundedness of multilinear Littlewood--Paley functions]
 {On the weak boundedness of multilinear Littlewood--Paley functions}

\authors

\author[Hormozi]{Mahdi Hormozi}
\address{Mahdi Hormozi\\
School of Mathematics\\
Institute for Research in Fundamental Sciences (IPM)\\
P. O. Box 19395-5746, Tehran, Iran} \email{me.hormozi@gmail.com}

\author[Sawano]{Yoshihiro Sawano}
\address{Yoshihiro Sawano\\
Department of Mathematics\\
Chuo University\\
1-13-27, Kasuga, 112-8551, Tokyo, Japan} \email{yoshihiro-sawano@celery.ocn.ne.jp}

\author[Yabuta]{K\^{o}z\^{o} Yabuta}
\address{K\^{o}z\^{o} Yabuta\\
Research Center for Mathematics and Data Science\\
Kwansei Gakuin University\\
Gakuen 2-1, Sanda 669-1337\\
Japan}\email{kyabuta3@kwansei.ac.jp}

\thanks{M. Hormozi is supported by a grant from IPM. Y. Sawano was supported by Grant-in-Aid for Scientific Research (C) (19K03546), 
the Japan Society for the Promotion of Science and People's Friendship University of Russia.}

\subjclass[2010]{Primary: 42B20, 42B25.}
\keywords{Singular integrals, weighted norm inequalities, {aperture dependence}}

\arraycolsep=1pt

\begin{abstract}
 In this note, notwithstanding the generalization, 
 we simplify and shorten the proofs of the main results 
 of 
 the third author's paper
 \cite{SXY} significantly. In particular, the new proof for 
 \cite[Theorem 1.1]{SXY} is quite short and, unlike the original proof, 
 does not rely on the properties of 
 the ``Marcinkiewicz function''. 
 This allows us to get a precise linear dependence on Dini constants with 
 a subsequent application to Littlewood--Paley operators by well-known 
 techniques. In other words, we relax the
log-Dini condition in the pointwise bound to the classical Dini condition.
This solves an open problem (see e.g. \cite[pp. 37--38]{CY}). 
Our method can be applied to the multilinear case.
\end{abstract}
\maketitle
\section{Introduction}

We seek a sharp sparse estimate for Littlewood--Paley operators.
The class of integral kernels is wide in the sense that
we allow the moduli $\varphi$ and $w$ of continuity
to satisfy the Dini condition
$\int\limits_{0}^{1} \frac{\varphi(t)+w(t)}{t}dt<\infty$.
Here by a modulus of continuity,
we mean a positive increasing function on $(0,1)$.
Write $\Gamma_\alpha(x)$ for the cone in $\mathbb{R}^{n+1}_{+}$ of aperture $\alpha>1$
centered at $x$, that is,
where 
\begin{equation}\label{eq:220403-1111}
\Gamma_\alpha(x)=\{(y,t)\in {\mathbb R}^{n+1}_+: |x-y|<\alpha t\}.
\end{equation}
Let $S_{\alpha,\phi}$ be the square function defined 
by means of a standard kernel $\phi$ as follows:
\begin{equation}\label{eq:220330-2}
S_{\alpha, \phi} f (x)=\Big(\iint_{\Gamma_\alpha(x)}|f\star \phi_t(y)|^2\f{dydt}{t^{n+1}}\Big)^{\frac12},
\end{equation}
 where $\phi_t(x)=t^{-n} \phi(x/t)$ and $\star$ refers 
to the convolution operation of two functions. 

The study on the linear/multilinear square functions has important applications 
in PDEs and other fields
of mathematics. 
For further details on the theory of linear multilinear square functions and their applications, 
we refer to \cite{CHIRSY, Ler5, Ler6, SXY} and the references therein.

In \cite{Ler6}, Lerner 
proved sharp weighted norm inequalities for $S_{\alpha, \phi} f $ 
by applying \textit{intrinsic square functions} introduced in \cite{Wil}. 
Later on, Lerner himself improved the result---he 
obtained the sharp dependence on $\alpha$---in \cite{Ler5} by using the local mean 
oscillation formula. 
Motivated by these works, 
many authors obtained many important results
 (see e.g. \cite{BBD,BuiNew,BuiHormozi,CY}). 
Recall that 
a modulus of continuity is an increasing concave function defined on $(0,1)$.
Let $\varphi:[0,1] \to [0,\infty)$ be a modulus of continuity which satisfies 
the Dini condition.
That is,
$\varphi:[0,1] \to [0,\infty)$
is an increasing function such that
$[\varphi]_{\rm{Dini}}:=\int_{0}^{1} \frac{\varphi(t)}{t}dt+\varphi(1)<\infty$. 
In the last years, there have been several advances in the fruitful area of 
weighted inequalities concerning the precise determination of the optimal 
bounds of the weighted operator norm of linear and bilinear Calder\'on-Zygmund 
 operators with a Dini continuous kernel in terms of the $A_p$ constant 
 of weights (see e.g. \cite{DHL, Newyork} and the references therein). 
The algorithm to obtain sparse domination is formulated in \cite{Newyork} in 
general
and
can be used to study both standard Calder\'on-Zygmund operators and square 
functions.
However, in order to obtain estimates
for kernels satisfying the Dini condition, the main obstacle is 
the endpoint estimate and its bound. 
In fact, under the Dini assumptions, 
some weak type tricks have not been available until now. 

The thrust of relaxing the $\log$-Dini condition to the Dini condition
comes from the works \cite{HRT17,Lacey17}.
In fact, in these papers, the authors obtained
the sharp estimates
for singular integral operators whose kernels satisfy
the Dini condition.
Thus, it is natural to ask ourselves whether
a counterpart to the Littlewood--Paley operators
is available.

We would like to point out the difference 
between the proofs of the main results
of this paper and
\cite{SXY},
where
the authors in 
\cite{SXY}
assumed the $\log$-Dini condition.
To do so, we use the operators $S_{1,\psi}$ defined via the cone
(\ref{eq:220403-1111}) for $\alpha=1$
and $g^*_{\lambda,\psi}$ defined via rapidly decaying weights.
We refer to (\ref{eq:220411-2}) and (\ref{eq:220411-3})
below for the definition of the operators.
But at this moment let us content ourselves with
the inequality
$S_{1,\psi} \lesssim g^{*}_{\lambda, \psi}$.
Seemingly, it is insufficient to handle $S_{1,\psi}$ solely.
However,
as is seen from (\ref{eq:211107-1}) and so on,
we can recover the boundedness properties
of $S_{\alpha,\psi}$ for $\alpha>1$
from $S_{1,\psi}$.
Furthermore,
(\ref{grelationS-linear}) allows us to recover
the estimate for $g_{\lambda,\psi}^*$.
We will mainly consider
the property of $S_{1,\psi}$,
while
the authors in
\cite{SXY}
dealt with $g^*_{\lambda,\psi}$.

 Moreover, 
 we assume a much weaker condition a priori i.e. the boundedness of $S_{1,\psi}$,
while
the boundedness of $g^*_{\lambda,\psi}$ is assumed in
\cite{SXY}.

The main aim of this paper is, notwithstanding the generalization, 
to simplify and shorten the proofs of the main results of \cite{SXY} significantly 
so that we get a precise linear dependence on Dini constants.
In particular, 
our new proof for \cite[Theorem 1.1]{SXY} 
is quite short and, unlike the proof given in \cite{SXY}, does not rely on the properties of 
the ``Marcinkiewicz function''. 
By means of the product, 
we relax the log-Dini condition in the pointwise bound to the classical Dini condition. See
(\ref{eq-sizecondition}),
(\ref{eq-smoothcondition1})
and
(\ref{eq-smoothcondition2})
for more.
This solves an open problem (see e.g. \cite[pp. 37--38]{CY}). 

In this paper, we discuss separately the linear case and the multilinear 
case for the sake of clarity. That is, we treat the linear case carefully.
The multilinear case is sketchy
due to similarity. 

We employ the following notation:
\begin{itemize}
\item
Denote by $\sharp A$ the cardinality of the set $A$
and by $\overline{A}$ its closure.
\item
For $\kappa,x>0$, write $\log^{\kappa}x=(\log x)^{\kappa}$.
\item
 Denote by $v_n$ the volume of the unit ball.
\item
By a cube,
we mean
a compact or right-open cube whose edges are parallel to 
the
coordinate axes.
\item
Denote by $Q(x,r)$ the closed cube centered at $x$ of volume $(2r)^n$.
For a cube $Q$,
$c(Q)$ stands for its center,
while
$\ell(Q)=|Q|^{\frac1n}$.
Thus,
$c(Q(x,r))=x$ and $\ell(Q(x,r))=2r$.
\item 
For a right-open cube $Q=\prod\limits_{j=1}^n[a_{j},b_{j})$,
the set ${\mathcal D}_{1}(Q)$ of the children of $Q$
stands for the set of all right-open cubes obtained by bisecting $Q$.
Define inductively ${\mathcal D}_k(Q)$ by
$
{\mathcal D}_k(Q)
\equiv\bigcup\limits_{R \in {\mathcal D}_{k-1}(Q)}{\mathcal D}_{1}(R).
$
Denote by ${\mathcal D}(Q)$ the set of all dyadic cubes
generated by
a right-open cube $Q$.
\item
We denote by
${\mathcal D}$ the set of all dyadic cubes.
\item

For $k=1,2,3$,
let
${\mathcal D}^{(k)}({\mathbb R})$
be the minimal family
satisfying the following conditions:
\begin{itemize}
\item
$\{[3j+k-1,3j+k)\}_{j \in {\mathbb Z}} 
\subset {\mathcal D}^{(k)}({\mathbb R})$.
\item
If
$I_1 \in {\mathcal D}^{(k)}({\mathbb R})$
satisfies
$\ell(I_1)=2\ell(I_2)$
or
$\ell(I_2)=2\ell(I_1)$
and
$\sharp(\overline{I_1} \cap \overline{I_2})=1$,
then
$I_2 \in {\mathcal D}^{(k)}({\mathbb R})$.
\end{itemize}For
$\vec{k}=(k_1,k_2,\ldots,k_n) \in \{1,2,3\}^n$,
we define
\begin{equation}\label{eq:220411-1}
{\mathcal D}^{(\vec{k})}({\mathbb R}^n)
=
\{I_1 \times I_2 \times \cdots \times I_n\,:\,
I_j \in {\mathcal D}^{(k_j)}({\mathbb R}),
\ell(I_1)=\ell(I_2)=\cdots=\ell(I_n)\}.
\end{equation}
\item 
We use the symbol $\fint_Q$ to denote the integral average
over a cube $Q$.
\item
Let $Q$ be a cube.
For $f \in L^1(Q)$,
we define
\[
\langle f \rangle_{1,Q}:=
\fint_{Q}|f(x)|dx.
\]
\item
Denote by ${\mathcal M}$ the Hardy--Littlewood maximal operator.
The operator $M_{{\mathcal D}}$ is the maximal operator generated by
the family ${\mathcal D}$,
that is,
\[
M_{{\mathcal D}}f(x)=\sup_{Q \in {\mathcal D}}1_Q(x)
\langle f \rangle_{1,Q}.
\]
Let $\kappa>0$.
We also denote
by ${\mathcal M}_\kappa$ the powered maximal operator:
\[
{\mathcal M}_\kappa f(x)={\mathcal M}[|f
|^\kappa](x)^{\frac{1}{\kappa}}
\quad (x \in {\mathbb R}^n).
\]
\item
For $f \in L^1({\mathbb R}^n)$,
define
the Fourier transform
by{\rm:}
\begin{eqnarray*}
{\mathcal F}f(\xi)
\equiv
(2\pi)^{-\frac{n}{2}}
\int_{{\mathbb R}^n} f(x){\rm e}^{-{\rm i} x \cdot \xi} dx.
\end{eqnarray*}

\item
For an operator $S$,
we write $S^2$ for the operator given by $S^2f=(Sf)^2$.
\end{itemize}

We describe the organization of this paper.
Main results for linear operators
are stated in Section \ref{s1},
while
Section \ref{s7} seeks a passage to the multilinear case.
We illustrate by a couple of examples
that the $\log$-Dini condition is not necessary
for the sparse decomposition
in Section \ref{s10.11},
where we prove the main result for linear operators.
We end this paper with some related results in Section \ref{sect5}.
\section{The linear Littlewood--Paley operators}\label{s1}
Here we state the main results for linear operators.
We give the definition of linear Littlewood--Paley operators
in Section \ref{sect2.1}.
A historical remark is given after we define
the operators we consider in this paper
in Section \ref{sect2.1}.
We indicate how to use the Dini condition in Section \ref{s2}
for Littlewood--Paley operators
whose kernels satisfy the Dini condition.
Section \ref{sec2} is oriented 
to the weak $L^1$-boundedness of $S_{\alpha,\psi}$.
As a preparatory step to prove our theorem,
in Section \ref{secweak2}
we consider two auxiliary operators
${\mathcal M}_{S_{\alpha,\psi}}$ and 
${\mathcal N}_{S_{\alpha,\psi}}$.
Section \ref{sparse-lastpart} proves the main result
formulated in Section \ref{sect2.1}.
\subsection{Definition}\label{sect2.1}
Let us recall the definition of square functions considered in this paper.

\medskip
Let $\psi(x,y)$ be a 
real-valued locally integrable function defined away from the diagonal 
$x =y$ in $\mathbb{R}^{2n}$. 
Let $\varphi,\,w$ be moduli of continuity. We assume that there is a positive 
constant $ A$ so that the following conditions hold
for any $x,y,h \in {\mathbb R}^n$:
\begin{enumerate}[Size condition:]
 \item \begin{equation}\label{eq-sizecondition}
 |\psi(x,y)|\leq A{\mathcal M}1_{Q(x,1)}(y)w\Big( \f{1}{1+|x-y|} \Big).
 \end{equation}
 \end{enumerate}

\begin{enumerate}[Smoothness condition:]
 \item
Whenever $|h|<\f{1}{2}|x-y|$, 
 \begin{equation}\label{eq-smoothcondition1}
 |\psi(x,y)-\psi(x+h,y)|\leq
 A{\mathcal M}1_{Q(x,1)}(y)\varphi\Bigl(\frac{|h|}{1+|x-y|}\Bigr)
 w\Big( \f{1}{1+|x-y|} \Big)
 \end{equation}
 and
\begin{equation}\label{eq-smoothcondition2}
 |\psi(x,y)-\psi(x, y+h)|\leq A{\mathcal M}1_{Q(x,1)}(y)
 \varphi\Bigl(\frac{|h|}{1+|x-y|}\Bigr)w\Big( \f{1}{1+|x-y|} \Big).
\end{equation}
\end{enumerate}

For $t>0$ we define a linear operator $\psi_t$ by
$$
\psi_t f (x)=\f{1}{t^{n}}\int_{\mathbb{R}^n}\psi\Big(\f{x}{t},\f{y}{t}\Big) 
f(y)dy
$$
for $f\in {\mathcal S}(\mathbb{R}^n)$.
For $\lambda>2$
and $\alpha>0$, the square functions $g^*_{\lambda, \psi}$ and
 $S_{\psi,\alpha}$ associated to $\psi$ are defined by
\begin{equation}\label{eq:220411-2}
g^*_{\lambda, \psi} f (x):=\Big(\iint_{{\mathbb R}^{n+1}_+}
\Big(\f{t}{t+|x-y|}\Big)^{n\lambda}|\psi_t f (y)|^2
\f{dydt}{t^{n+1}}\Big)^{\frac12} \quad (x \in {\mathbb R}^n)\end{equation}
and
\begin{equation}\label{eq:220411-3}
S_{\alpha, \psi} f (x):=\Big(\iint_{\Gamma_\alpha(x)}|\psi_t f (y)|^2
\f{dydt}{t^{n+1}}\Big)^{\frac12} \quad (x \in {\mathbb R}^n),
\end{equation}
respectively,
where $\Gamma_\alpha(x)$
is given by (\ref{eq:220403-1111}).
We note that these operators
are generalizations and expansions
of the operators handled 
in \cite[Section 7, (7.2)--(7.4)]{LLO}.
In comparison with the paper \cite{LLO},
we can consider the continuous wavelet expansions.
 We assume a priori
 in this paper that $S_{1,\psi}$ is $L^2$-bounded. 
A direct consequence
of this assumption
is that $S_{\alpha,\psi}$ is $L^2$-bounded for any 
$\alpha>0$, 
as is seen from $\|S_{\alpha,\psi}f\|_{L^2}
=\alpha^{n/2}\|S_{1,\psi}f\|_{L^2}$ 
(see Lemma \ref{l2boundedness}). 
Also, using \cite[Lemma 2.1]{Ler5} and \cite[Lemma 3.1]{BuiHormozi}, 
we have 
\begin{equation}\label{eq:211107-1}
\|S_{\alpha,\psi}\|_{L^1 \to L^{1,\infty}} 
\lesssim \alpha^{n} \|S_{1,\psi}\|_{L^1 \to L^{1,\infty}}.
\end{equation}

The starting point in this section is the weak-$(1,1)$ estimate
of $S_{\alpha,\psi}$.
\begin{theorem}\label{prop:201226-2}
If $\varphi$ and $w$ are moduli of continuity,
then there exist $A>0$ and $D_1>0$ such that
$\|S_{\alpha,\psi}\|_{L^1 \to L^{1,\infty}} \le D_1 \alpha^n
(A [w]_{\rm{Dini}}(1+[\varphi]_{\rm{Dini}})+\|S_{1,\psi}\|_{L^2 \to L^2})$
for all $\alpha \ge 1$.
\end{theorem}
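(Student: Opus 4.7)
The plan is first to reduce to the case $\alpha=1$ via the aperture inequality (\ref{eq:211107-1}), and then to prove the weak $(1,1)$-bound for $S_{1,\psi}$ through a Calder\'on--Zygmund decomposition whose level is tuned so that both the ``good'' and ``bad'' contributions depend \emph{linearly} on $\|S_{1,\psi}\|_{L^2\to L^2}$ and on $[w]_{\rm{Dini}}(1+[\varphi]_{\rm{Dini}})$.

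Fix $f\in L^1({\mathbb R}^n)$ and $\lambda>0$, and apply the Calder\'on--Zygmund decomposition to $f$ at the tuned level $\mu:=c\lambda/\|S_{1,\psi}\|_{L^2\to L^2}$, so $f=g+b$ with $|g|\lesssim \mu$, $\|g\|_{L^2}^2\lesssim \mu\|f\|_{L^1}$, and $b=\sum_{Q\in{\mathcal Q}} b_Q$, where the $Q\in{\mathcal Q}$ are pairwise disjoint, each $b_Q$ is supported in $Q$ with mean zero, $\|b_Q\|_{L^1}\lesssim \mu|Q|$, and $\sum_Q|Q|\lesssim \|f\|_{L^1}/\mu$. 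For the good part, Chebyshev and the $L^2$-hypothesis give
\[
\bigl|\{|S_{1,\psi}g|>\lambda/2\}\bigr|\lesssim \frac{\|S_{1,\psi}\|_{L^2\to L^2}^2\,\mu\,\|f\|_{L^1}}{\lambda^2}\simeq \frac{\|S_{1,\psi}\|_{L^2\to L^2}\|f\|_{L^1}}{\lambda};
\]
the same choice of $\mu$ also yields $|\Omega^*|:=\bigl|\bigcup_Q 3Q\bigr|\lesssim \|f\|_{L^1}/\mu\simeq \|S_{1,\psi}\|_{L^2\to L^2}\|f\|_{L^1}/\lambda$, so it remains to estimate $\{|S_{1,\psi}b|>\lambda/2\}$ on ${\mathbb R}^n\setminus\Omega^*$.

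For $x\in{\mathbb R}^n\setminus\Omega^*$ and $(y,t)\in\Gamma_1(x)$, one exploits the mean-zero property of $b_Q$ to write
\[
\psi_t b_Q(x)=\frac{1}{t^n}\int_Q\bigl[\psi(x/t,z/t)-\psi(x/t,c(Q)/t)\bigr]b_Q(z)\,dz
\]
and invokes the smoothness condition (\ref{eq-smoothcondition2}) (the requirement $|h|<|x-z|/2$ is automatic since $x\notin 3Q$). Squaring the result, integrating over the cone $\Gamma_1(x)$ against $t^{-n-1}\,dy\,dt$, taking the square root, and integrating in $x\in{\mathbb R}^n\setminus\Omega^*$, the target is the off-diagonal bound
\[
\int_{{\mathbb R}^n\setminus\Omega^*} S_{1,\psi}b_Q(x)\,dx\lesssim [w]_{\rm{Dini}}(1+[\varphi]_{\rm{Dini}})\|b_Q\|_{L^1},
\]
from which summation over $Q$ and Chebyshev furnish the desired estimate $\lesssim [w]_{\rm{Dini}}(1+[\varphi]_{\rm{Dini}})\|f\|_{L^1}/\lambda$.

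The main technical obstacle lies in this last step: the $(y,t)$-integration on $\Gamma_1(x)$ must be split according to the relative sizes of $t$, $\ell(Q)$, and $\operatorname{dist}(x,Q)$, invoking the size condition (\ref{eq-sizecondition}) on the regime $t\lesssim\ell(Q)$ (responsible for the bare $[w]_{\rm{Dini}}$ contribution, hence the ``$+1$'' in the final bound) and the smoothness condition (\ref{eq-smoothcondition2}) elsewhere (producing the $[w]_{\rm{Dini}}[\varphi]_{\rm{Dini}}$ term). One then makes a single change of variables in $t$ to extract the Dini integrals $\int_0^1\varphi(r)r^{-1}dr$ and $\int_0^1 w(r)r^{-1}dr$, thereby avoiding the dyadic decomposition in $t$ that produced the $\log$-Dini loss in \cite{SXY}. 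Finally, the localization factor ${\mathcal M}1_{Q(x/t,1)}$ in (\ref{eq-sizecondition})--(\ref{eq-smoothcondition2}) supplies the polynomial decay in $|x-y|$ that keeps the spatial integration at each scale absolutely convergent, after which one recovers the full claim for $\alpha\ge 1$ via (\ref{eq:211107-1}).
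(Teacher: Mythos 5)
Your overall route is the same as the paper's: reduce to $\alpha=1$ via \eqref{eq:211107-1}, run a Calder\'on--Zygmund decomposition, handle the good part through the $L^2$ hypothesis, and establish an off-diagonal $L^1$ bound for $S_{1,\psi}b_Q$ by combining the size condition (near the cube, producing the bare $[w]_{\rm Dini}$) with the smoothness condition (far from the cube, producing $[w]_{\rm Dini}[\varphi]_{\rm Dini}$), exactly as in Lemma~\ref{lem:220411-1} together with Lemma~\ref{lem:210801-1} and Corollary~\ref{cor:220330-1}. Your tuned level $\mu=c\lambda/\|S_{1,\psi}\|_{L^2\to L^2}$ is a genuine refinement: it yields the \emph{linear} dependence on $\|S_{1,\psi}\|_{L^2\to L^2}$ directly, whereas the paper's decomposition at height $\rho$ produces $\|S_{1,\psi}\|_{L^2\to L^2}^2+1$, which then needs the homogeneity normalization (the technique the authors use explicitly later in the multilinear case) to recover the stated linear bound.

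One technical slip: $\Omega^*=\bigcup 3Q$ is too small. You argue that $x\notin 3Q$ makes $|h|<\tfrac12|x-z|$ automatic in \eqref{eq-smoothcondition2}, but with $z\in Q$ and $h=(c(Q)-z)/t$ one has $|c(Q)-z|\le\tfrac{\sqrt n}{2}\ell(Q)$, while $x\notin 3Q$ only guarantees $|x-z|\gtrsim \ell(Q)$; for $n\ge 1$ the ratio $\tfrac{\sqrt n}{2}$ is not below $\tfrac12$. Moreover, the smoothness condition is applied at the cone point $(y,t)\in\Gamma_1(x)$ rather than at $x$ itself, so the relevant distance is $|y-z|$, not $|x-z|$; small $t$ absorbs this but it still needs tracking. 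The paper uses the dimensional enlargement $64nQ$ and the geometric inequality \eqref{eq:220330-201} precisely to close this. Replacing $3Q$ by a suitable $c_n Q$ (and carrying the cone shift through as in the paper) repairs your argument; otherwise the proof structure is sound and matches the paper's.
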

As is seen from
\begin{equation}\label{grelationS-linear}
 g_{\lambda, \psi}^{*}f
 \lesssim \sum_{k=0}^{\infty} 2^{-\frac{k\lambda n} {2}} S_{ 2^{k+1}, \psi}f
 \quad (f \in L^1({\mathbb R}^n)),
\end{equation} 
Theorem \ref{prop:201226-2} guarantees the boundedness of 
$g_{\lambda,\psi}^*$ for $\lambda>2$.

Let
${\mathfrak D}$ be either ${\mathcal D}({\mathbb R}^n)$
or a family given by (\ref{eq:220411-1}).
Let $0<\eta<1$.
We say that
a family ${\mathcal S}$ is said to be $\eta$-sparse,
if
\[
\left|\bigcup_{R \in {\mathcal S}, R \subsetneq Q}R\right|
\le (1-\eta)|Q|.
\]
If $\eta=\frac12$,
then we simply say that
a family ${\mathcal S}$ is sparse.
In this paper, to save the number of parameters,
we let $\eta=\frac12$.
However, a slight modification allows us to extend the case
of $\eta\in(0,\frac12)$.
We are now ready to state our main result.
\begin{theorem}\label{domiCor1}
For any $\alpha \geq 1$
and for all compactly supported $f \in L^1(\Rn)$,
there exist sparse families
${\mathcal S}^{(\vec{k})} \subset {\mathcal D}^{(\vec{k})}$ 
$($depending on $f$$)$
for each
$\vec{k}=(k_1,k_2,\ldots,k_n) \in \{1,2,3\}^n$ such that
\[
S_{\alpha,\psi} f\cdot 1_{Q_0}
\lesssim \alpha^n
\log^{\frac32}(2+\alpha)
([\varphi]_{\rm{Dini}} [w]_{\rm{Dini}}+\|S_{1,\psi}\|_{L^2 \to L^2})
\sum_{\vec{k} \in \{1,2,3\}^n}
\Big[\sum_{P \in {\mathcal S}^{(\vec{k})}} \langle f \rangle_{1,P}^2 1_P\Big]^{\frac{1}{2}}.
\]
Here the implicit constant in $\lesssim$ is independent of $\alpha$,
$[w]_{\rm Dini}$
and
$[\varphi]_{\rm Dini}$.
\end{theorem}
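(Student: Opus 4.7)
\medskip

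\textbf{Proof proposal.} The plan is to prove the theorem in the by now standard Lerner--Nazarov style: first reduce to the dyadic subcubes of a fixed $Q_{0}$, then build a sparse family via a recursive stopping-time procedure driven by two auxiliary maximal operators, and finally redistribute the resulting arbitrary sparse family across the $3^{n}$ shifted dyadic grids ${\mathcal D}^{(\vec{k})}$ defined in \eqref{eq:220411-1}. Because $S_{\alpha,\psi}$ is a square function, it is more natural to work with $S_{\alpha,\psi}^{2}$; the $\ell^{2}$ sum on the right-hand side of the claim will then arise, after taking square roots, from additivity of $S_{\alpha,\psi}^{2}$ over disjoint annuli carved out by the stopping cubes.

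First I would fix $Q_{0}$ and, for any cube $Q \in {\mathcal D}(Q_{0})$ selected by the procedure, decompose
\[
S_{\alpha,\psi}^{2}(f 1_{3Q})(x)
= S_{\alpha,\psi,Q}^{2}(f 1_{3Q})(x)
+ \bigl(S_{\alpha,\psi}^{2}(f 1_{3Q}) - S_{\alpha,\psi,Q}^{2}(f 1_{3Q})\bigr)(x),
\]
where $S_{\alpha,\psi,Q}$ denotes the part of \eqref{eq:220411-3} restricted to $t \le \ell(Q)$. The difference term lives on ``large scales'' and can be controlled pointwise on $Q$ by $\alpha^{n} [w]_{\rm Dini}(1+[\varphi]_{\rm Dini}) \langle f\rangle_{1,Q}^{2}$ using the size/smoothness conditions \eqref{eq-sizecondition}--\eqref{eq-smoothcondition2}; this is the place where the Dini constants enter linearly (squared, to be removed by the final square root). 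For the small-scale piece $S_{\alpha,\psi,Q}^{2}$ I would introduce the two auxiliary operators ${\mathcal M}_{S_{\alpha,\psi}}$ and ${\mathcal N}_{S_{\alpha,\psi}}$ from Section \ref{secweak2}, which measure the truncated square function averaged over $Q$ and its maximal-oscillation counterpart. The choice of the stopping cubes $\{Q_{j}\}$ is then the union of the maximal dyadic children $Q' \subsetneq Q$ such that
\[
\langle f \rangle_{1,Q'} > C \langle f\rangle_{1,Q}
\quad \text{or} \quad
{\mathcal N}_{S_{\alpha,\psi}}(f 1_{3Q})(x) > C \bigl(A[w]_{\rm Dini}(1+[\varphi]_{\rm Dini}) + \|S_{1,\psi}\|_{L^{2}\to L^{2}}\bigr)\langle f\rangle_{1,Q}
\]
for some $x \in Q'$.

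The weak $(1,1)$ bound from Theorem \ref{prop:201226-2}, together with the standard weak $(1,1)$ inequality for the Hardy--Littlewood maximal operator, will guarantee $|\bigcup_{j} Q_{j}| \le \tfrac{1}{2}|Q|$ provided $C$ is chosen as a large multiple of $D_{1}\alpha^{n}\log^{3/2}(2+\alpha)$; the logarithmic factor arises from bounding ${\mathcal M}_{S_{\alpha,\psi}}$ (its aperture-dependent weak-type constant in the John--Nirenberg style step is exactly what produces $\log^{3/2}(2+\alpha)$, as in \cite{Ler5,BuiHormozi}). On the complement $E(Q) := Q \setminus \bigcup_{j} Q_{j}$ both competing quantities are under control, which yields the pointwise bound
\[
S_{\alpha,\psi}^{2}(f 1_{3Q})(x) 1_{E(Q)}(x)
\lesssim
\bigl[\alpha^{n}\log^{3/2}(2+\alpha)\bigr]^{2}\bigl([\varphi]_{\rm Dini}[w]_{\rm Dini} + \|S_{1,\psi}\|_{L^{2}\to L^{2}}\bigr)^{2} \langle f\rangle_{1,Q}^{2}.
\]
Iterating this across the generations of stopping cubes produces a sparse family ${\mathcal S}$ (of arbitrary, not dyadic, cubes) such that $S_{\alpha,\psi}^{2} f 1_{Q_{0}}$ is bounded by $\bigl[\alpha^{n}\log^{3/2}(2+\alpha)(\cdots)\bigr]^{2}\sum_{Q\in{\mathcal S}}\langle f\rangle_{1,Q}^{2}1_{Q}$; taking square roots gives the desired shape.

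Finally, to cast ${\mathcal S}$ as a union of sparse subfamilies ${\mathcal S}^{(\vec{k})}\subset {\mathcal D}^{(\vec{k})}$, I would invoke the $3^{n}$-grids lemma: every cube $Q\subset\mathbb R^{n}$ is contained in some $\widetilde Q \in \bigcup_{\vec{k}}{\mathcal D}^{(\vec{k})}$ with $\ell(\widetilde Q)\le 3\ell(Q)$. Replacing each $Q\in{\mathcal S}$ by such a $\widetilde Q$ costs only a multiplicative constant in $\langle f\rangle_{1,Q}$ and preserves sparseness after splitting by $\vec{k}$. The main obstacle is the middle step: extracting the factor $\alpha^{n}\log^{3/2}(2+\alpha)$ sharply from the weak $(1,1)$ and maximal bounds for ${\mathcal N}_{S_{\alpha,\psi}}$ while keeping the dependence on $[\varphi]_{\rm Dini}$ and $[w]_{\rm Dini}$ linear; this is precisely where the simplification over \cite{SXY} (bypassing the Marcinkiewicz function via the inequality $S_{1,\psi}\lesssim g^{*}_{\lambda,\psi}$ and the a priori $L^{2}$-boundedness of $S_{1,\psi}$) is essential, because it lets the smoothness moduli appear only through the pointwise bounds \eqref{eq-sizecondition}--\eqref{eq-smoothcondition2} rather than through a separate Marcinkiewicz-type control.
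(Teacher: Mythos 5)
Your proposal is in the same Lerner--Nazarov family as the paper's argument and correctly identifies the essential inputs: the weak $(1,1)$ bound of Theorem~\ref{prop:201226-2}, the auxiliary operators ${\mathcal M}_{S_{\alpha,\psi}}$ and ${\mathcal N}_{S_{\alpha,\psi}}$ of Section~\ref{secweak2} with their weak-type bound (Lemma~\ref{weakestiamte}), and the reduction to $3^n$ shifted dyadic grids \`a la Lerner \cite{Newyork}. The overall architecture is close, but several of your technical choices diverge from what the paper actually does, and a couple of your auxiliary claims would need revision to be made rigorous.

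First, your $t$-truncation decomposition (splitting $S_{\alpha,\psi}^{2}(f1_{3Q})$ into $S_{\alpha,\psi,Q}^{2}$ restricted to $t\le\ell(Q)$ plus a large-scale remainder) does not appear in the paper and does not obviously simplify the iteration. The paper never truncates in $t$. Instead it compares $S^2_{\alpha,\psi}(f\cdot 1_{3Q_0})$ and $S^2_{\alpha,\psi}(f\cdot 1_{3\hat P_j})$ directly via the oscillation operator ${\mathcal M}_{S_{\alpha,\psi},Q_0}$, which is built with $|S^2_{\alpha,\psi}f - S^2_{\alpha,\psi}(f1_{3Q})|$. If you keep the $t$-truncation, you create a second source of error (mismatched scales at each generation of stopping cubes) which you would then have to control separately; the paper's formulation avoids this entirely. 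Second, and more importantly, your phrase \emph{``additivity of $S_{\alpha,\psi}^2$ over disjoint annuli''} is not correct as a mechanism: because $|\psi_t(f_1+f_2)|^2\neq|\psi_t f_1|^2+|\psi_t f_2|^2$, the squared square function is not additive over disjoint supports. The $\ell^2$ structure on the right-hand side comes instead from the recursive inequality
\[
S^2_{\alpha,\psi}f\cdot 1_{Q_0}
\le
\sum_{j} S^2_{\alpha,\psi}(f\cdot 1_{3\hat P_j})\cdot 1_{\hat P_j}
+ \gamma([\varphi]_{\rm Dini}[w]_{\rm Dini}+\|S_{1,\psi}\|_{L^2\to L^2})^2\langle f\rangle_{1,3Q_0}^2\cdot 1_{Q_0},
\]
iterated and then followed by a square root; the key algebraic step is the elementary identity $|a^2-b^2|\le|a-b|(|a-b|+2|b|)$, which the paper exploits in Lemma~\ref{weakestiamte} to reduce ${\mathcal M}_{S_{\alpha,\psi}}$ to ${\mathcal N}_{S_{\alpha,\psi}}$. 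If you want to keep a ``disjoint annuli'' picture, you must pass through this identity, not additivity. Third, your stopping rule is a disjunction on $\langle f\rangle_{1,Q'}$ and ${\mathcal N}_{S_{\alpha,\psi}}(f1_{3Q})$, whereas the paper uses the level set of a single operator $\widetilde{\mathcal M}_{S_{\alpha,\psi,Q_0}}=\max\{S_{\alpha,\psi},{\mathcal M}_{S_{\alpha,\psi},Q_0}\}$. The crucial point is that $S_{\alpha,\psi}f$ itself must be under control on the good set; the paper secures this by putting $S_{\alpha,\psi}f$ in the $\max$. Your criterion does deliver this too, but only because ${\mathcal N}_{S_{\alpha,\psi}}f\ge S_{\alpha,\psi}f$ pointwise (by Fatou, as the cube in the supremum shrinks to $x$); you should state this explicitly, since without it your claimed pointwise bound of $S^2_{\alpha,\psi}(f1_{3Q})$ on $E(Q)$ is unjustified. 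Finally, the $3^n$-grid reduction is placed at the very end of your argument, but the stopping cubes your procedure produces are already dyadic; the paper performs the shifted-grids reduction \emph{before} the stopping-time (reducing Theorem~\ref{domiCor1} to the fixed-$Q_0$ Lemma~\ref{domiTheorem}), which is the cleaner order since the reason for the $3^n$ grids is to cover the compact support of $f$ by a single $3Q_0$ from some shifted grid, not to dyadicize arbitrary cubes.

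In summary: your route is salvageable and essentially parallel in spirit, but you should (i) drop the $t$-truncation and compare $S^2(f1_{3Q_0})$ with $S^2(f1_{3\hat P_j})$ directly via ${\mathcal M}_{S_{\alpha,\psi},Q_0}$; (ii) replace the ``additivity'' heuristic with the $|a^2-b^2|$ identity; (iii) make explicit the pointwise domination ${\mathcal N}\ge S_{\alpha,\psi}$ if you wish to stop on ${\mathcal N}$ rather than on $\max\{S_{\alpha,\psi},{\mathcal M}\}$; and (iv) move the $3^n$-grid reduction to the outer layer of the argument.
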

A standard argument as in \cite{Newyork}
boils down the proof of Theorem \ref{domiCor1}
to the following lemma:
\begin{lemma}\label{domiTheorem}
For any $\alpha \geq 1$
and 
for any $f \in L^1(\Rn)$ 
supported in $3Q_0$ for some $Q_0 \in \mathscr{D}$,
there exists a sparse family ${\mathcal S} \subset {\mathcal D}(Q_0)$ 
$($depending on $f$$)$ such that
\begin{equation}\label{Sparse-Dini}
S_{\alpha,\psi} f\cdot 1_{Q_0}
\lesssim \alpha^n
\log^{\frac32}(2+\alpha)
([\varphi]_{\rm{Dini}} [w]_{\rm{Dini}}+\|S_{1,\psi}\|_{L^2 \to L^2})
\Big[\sum_{P \in {\mathcal S}} 
\langle f \rangle_{1,3P}^2 1_P\Big]^{\frac{1}{2}}.
\end{equation}
Here the implicit constant is
independent of $\alpha$,
$\varphi$ and $w$.
\end{lemma}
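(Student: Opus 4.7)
The plan is to derive Lemma \ref{domiTheorem} via the standard recursive sparse-domination algorithm of \cite{Newyork}, using the weak-$(1,1)$ bound of Theorem \ref{prop:201226-2} as the analytic input and taking advantage of the square-function identity $S_{\alpha,\psi}^2=(S_{\alpha,\psi})^2$ to shape the right-hand side of \eqref{Sparse-Dini}.

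First, following Section \ref{secweak2}, I would introduce a grand maximal truncation $\mathcal{M}_{S_{\alpha,\psi}}$, defined roughly by
\[
\mathcal{M}_{S_{\alpha,\psi}} f(x) := \sup_{Q \ni x}\, \esssup_{\xi \in Q} \, S_{\alpha,\psi}(f \cdot 1_{\R^n \setminus 3Q})(\xi),
\]
where $Q$ ranges over cubes containing $x$. Combining Theorem \ref{prop:201226-2} with the size/smoothness conditions \eqref{eq-sizecondition}--\eqref{eq-smoothcondition2} to absorb the ``in/out'' exchange on cubes, the target is a weak-type bound
\[
\|\mathcal{M}_{S_{\alpha,\psi}}\|_{L^1 \to L^{1,\infty}} \le \Lambda := C\,\alpha^n \log^{\frac32}(2+\alpha)\bigl([\varphi]_{\rm Dini}[w]_{\rm Dini} + \|S_{1,\psi}\|_{L^2 \to L^2}\bigr).
\]
The logarithmic factor is the cost of passing from $S_{1,\psi}$ to $S_{\alpha,\psi}$ through \eqref{grelationS-linear} combined with a Whitney-type decomposition of the cone $\Gamma_\alpha$.

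Second, I would run the recursive stopping-time construction inside $\mathcal{D}(Q_0)$. Set $\mathcal{S}_0 = \{Q_0\}$ and, given $P \in \mathcal{S}_k$, let $\mathrm{ch}(P)$ denote the family of maximal cubes $R \in \mathcal{D}(P)$ with $R \subsetneq P$ for which at least one of
\[
\langle f \rangle_{1,3R} > 2^{n+2} \langle f \rangle_{1,3P}, \qquad \mathcal{M}_{S_{\alpha,\psi}}(f \cdot 1_{3P})(x) > C\Lambda \langle f \rangle_{1,3P}\ \text{for a.e.\ } x \in R
\]
holds, with a fixed large absolute constant $C$. The weak-$(1,1)$ inequalities for $\mathcal{M}_{S_{\alpha,\psi}}$ and for the dyadic maximal operator yield $\bigl|\bigcup_{R \in \mathrm{ch}(P)} R\bigr| \le \tfrac12 |P|$, so the family $\mathcal{S} := \bigcup_k \mathcal{S}_k$, with $\mathcal{S}_{k+1} := \bigcup_{P \in \mathcal{S}_k} \mathrm{ch}(P)$, is $\tfrac12$-sparse. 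Third, on the set $E_P := P \setminus \bigcup_{R \in \mathrm{ch}(P)} R$ the off-diagonal part of $S_{\alpha,\psi} f$ at scales coarser than $\ell(\mathrm{ch}(P))$ is pointwise controlled by $\Lambda \langle f \rangle_{1,3P}$, while the piece at scales below $\ell(\mathrm{ch}(P))$ is handled recursively. Using the pointwise decomposition of $S_{\alpha,\psi}^2 f$ as an $\ell^2$ sum over stopping levels and squaring through, one arrives at
\[
(S_{\alpha,\psi} f)^2 \cdot 1_{Q_0} \lesssim \Lambda^2 \sum_{P \in \mathcal{S}} \langle f \rangle_{1,3P}^2 \, 1_P,
\]
and extracting a square root delivers \eqref{Sparse-Dini}.

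The main obstacle is the first step: establishing the weak-$(1,1)$ bound for $\mathcal{M}_{S_{\alpha,\psi}}$ with linear dependence on $[\varphi]_{\rm Dini}$ and $[w]_{\rm Dini}$ and the stated power of $\alpha$ and logarithm. This is where the decisive improvement over \cite{SXY} sits; it requires exploiting the Dini conditions \eqref{eq-smoothcondition1}--\eqref{eq-smoothcondition2} directly rather than a log-Dini strengthening, leveraging only the $L^2$-boundedness hypothesis on $S_{1,\psi}$, and pushing Theorem \ref{prop:201226-2} through the grand maximal definition without incurring further losses beyond $\log^{3/2}(2+\alpha)$.
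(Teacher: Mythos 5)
Your overall strategy (Lerner's sparse-domination algorithm, with a weak-$(1,1)$ maximal-truncation bound and a weak-$(1,1)$ bound for $S_{\alpha,\psi}$ as the two analytic inputs) is the right one and is indeed the route the paper takes. But there is a genuine gap in the way you handle the sublinearity of the square function, and a misattribution of the logarithmic loss.

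The key point your proposal glosses over is that $S_{\alpha,\psi}$ is sublinear, so the additive ``in/out'' telescoping that makes Lerner's iteration go through for Calder\'on--Zygmund operators does not apply to $S_{\alpha,\psi}$ itself. The paper resolves this by running the entire iteration on the \emph{squared} operator and by defining the relevant maximal operator as
\[
\mathcal{M}_{S_{\alpha,\psi}}f(x)=\sup_{Q}1_Q(x)\sqrt{\big|S_{\alpha,\psi}f(x)^2-S_{\alpha,\psi}(f\cdot 1_{3Q})(x)^2\big|},
\]
i.e.\ via the absolute difference of squares, \emph{not} as a grand maximal truncation $\sup_{Q\ni x}\esssup_{\xi\in Q}S_{\alpha,\psi}(f\cdot 1_{\R^n\setminus 3Q})(\xi)$ as you wrote. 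The difference-of-squares form is precisely what makes the telescoping work at the crucial step: for each stopping cube $\hat P_j$ meeting $E^{\rm c}$ one bounds $\big|S_{\alpha,\psi}^2(f\cdot 1_{3Q_0})-S_{\alpha,\psi}^2(f\cdot 1_{3\hat P_j})\big|\le\inf_{\hat P_j}\widetilde{\mathcal{M}}^2_{S_{\alpha,\psi},Q_0}(f\cdot 1_{3Q_0})$, which then feeds the recursion on $S_{\alpha,\psi}^2$. Your ``squaring through'' and ``pointwise decomposition of $S_{\alpha,\psi}^2 f$ as an $\ell^2$ sum over stopping levels'' does gesture at this but never produces the needed inequality; the scale-based decomposition of the cone you allude to (coarser/finer than $\ell(\mathrm{ch}(P))$) is not how the argument is actually carried out --- what is decomposed is the \emph{support} of $f$ (in/out of $3P$), not the $t$-range of the integral. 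The paper does show how to relate the difference-of-squares operator $\mathcal{M}_{S_{\alpha,\psi}}$ to the pointwise truncation operator $\mathcal{N}_{S_{\alpha,\psi}}$ (via $|a^2-b^2|=|a-b||a+b|$), so the two are not unrelated, but the iteration genuinely requires the difference-of-squares form.

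A second, smaller issue: you attribute the $\log^{3/2}(2+\alpha)$ factor to ``passing from $S_{1,\psi}$ to $S_{\alpha,\psi}$ through \eqref{grelationS-linear} combined with a Whitney-type decomposition of the cone $\Gamma_\alpha$.'' In fact, the passage $S_{1,\psi}\to S_{\alpha,\psi}$ costs only $\alpha^n$ (see \eqref{eq:211107-1} and Lemma \ref{l2boundedness}). The $\log^{3/2}(2+\alpha)$ factor enters in the weak-$(1,1)$ bound for $\mathcal{N}_{S_{\alpha,\psi}}$ in Lemma \ref{weakestiamte}: the oscillation estimate $|S_{\alpha,\psi}(f\cdot 1_{(3Q)^c})(x)-S_{\alpha,\psi}(f\cdot 1_{(3Q)^c})(x')|$ for $x,x'\in Q$ is controlled, after Minkowski in the cone variable, using Lemma \ref{auxiliary}(b) (contributing $\log^{1/2}(2+\alpha)$ from the $w$-integral) and Lemma \ref{auxiliary}(c) (contributing $\log(2+\alpha)$ from the $\varphi$-sum). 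Those two steps are exactly where the Dini-vs-log-Dini improvement lives, so getting this accounting right is important for achieving the linear dependence on $[\varphi]_{\rm Dini}[w]_{\rm Dini}$ with the stated log power.
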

We offer words to the existing results.
First of all,
Lemma \ref{domiTheorem} resembles
\cite[Theorem 7.2]{LLO}.
However,
Lemma \ref{domiTheorem} carefully keeps track
of the constant.
So, we will describe its proof from scratch.
Let us also remark that a passage to the multilinear case is done
later using the same idea in the linear case.
By considering $w(\xi)=\xi^\delta$ and $\varphi(\xi)=\xi^\gamma$, 
we learn that
our result is a generalization of \cite[Theorem 1.1]{SXY}. Notwithstanding the generalization, our proof for 
 \cite[Theorem 1.1]{SXY} is simpler and, unlike the proof given 
 in \cite{SXY}, does not rely on the properties of 
 the ``Marcinkiewicz function''.
 
In the sequel, for the sake of later use, we define 
$\varphi(t):=\varphi(1)$ and 
$w(t):=w(1)$ for $t>1$, without loss of generality. 
We note that for any $A>1$, 
$\int_0^{A}\varphi(t)dt/t\le C_A [\varphi]_{\rm{Dini}}$ and 
$\int_0^{A}w(t)dt/t\le C_A [w]_{\rm{Dini}}$. 
The size of $C_A$ is important because
it contributes to the power of $\log(2+\alpha)$ in 
(\ref{Sparse-Dini}).
See Lemma \ref{auxiliary}
for how fast $C_A$ grows.
It is noteworthy not to assume the $\log$-Dini condition
on $w$ and $\varphi$, that is,
we do not have to assume that
\[
\int_0^1 w(t)\log \frac1t \cdot \frac{dt}{t}<\infty
\]
or that
\[
\int_0^1 \varphi(t)\log \frac1t \cdot \frac{dt}{t}<\infty.
\]

Motivated by the works of Coifman and Meyer \cite{CM},
Shi, Xue and Yan
together with the third author introduced and investigated
the multilinear square functions given by
(\ref{eq:220411-2})
and
(\ref{eq:220411-3}) in \cite{SXY, XY}. 
The multilinear square functions have 
important applications in PDEs and other fields
of mathematics. 
In particular, Fabes, Jerison, and Kenig applied multilinear square functions in PDEs. 
For example,
in \cite{FJK1}, they studied the solutions of
the Cauchy problem for non-divergence form parabolic equations 
via some multilinear Littlewood--Paley type estimates for the square root of an elliptic operator in divergence form. 
Also,
they obtained the necessary and sufficient conditions for absolute continuity of elliptic-harmonic measure
using
a multilinear Littlewood--Paley estimate in \cite{FJK2}. 
Moreover, in \cite{FJK3}, they applied a class of multilinear square functions to Kato's problem. 
For further details on the theory of multilinear square functions and their applications, 
we refer to \cite{CY, CDM, CMM, CM, FJK1, FJK3} and the references therein.
 
\subsection{Auxiliary estimates}\label{s2}
First, for the case of $L^2(\Rn)$, we verify how strongly
the operator norm depends on
the aperture $\alpha$.
\begin{lemma}\label{l2boundedness}
Let $\alpha \geq 1$. Then
$\|S_{\alpha,\psi}f\|_{L^2}
=\alpha^{n/2}\|S_{1,\psi}f\|_{L^2} $
for all $f \in L^2({\mathbb R}^n)$.
\end{lemma}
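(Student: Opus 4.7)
The plan is to square both sides, apply Fubini's theorem to the double integral defining $\|S_{\alpha,\psi}f\|_{L^2}^2$, and then evaluate an explicit volume integral to extract the factor $\alpha^n$. The identity $\|S_{\alpha,\psi}f\|_{L^2}=\alpha^{n/2}\|S_{1,\psi}f\|_{L^2}$ is of the same flavour as the classical computation
$\|S_{\alpha,\phi}f\|_{L^2}^2 \simeq \alpha^n \iint_{\mathbb{R}^{n+1}_+}|f\star\phi_t(y)|^2\,dy\,dt/t$, so the argument should proceed with no real analytic input---only a book-keeping of constants.

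Concretely, I would first write
\[
\|S_{\alpha,\psi}f\|_{L^2}^2=\int_{\mathbb R^n}\iint_{\Gamma_\alpha(x)}|\psi_t f(y)|^2\,\frac{dy\,dt}{t^{n+1}}\,dx
\]
and then swap the $x$-integration with the $(y,t)$-integration by Fubini (the integrand is non-negative, so no integrability issue arises). The key observation is the symmetric reformulation of the cone: $(y,t)\in\Gamma_\alpha(x)$ if and only if $x$ lies in the open ball $\{x\in\mathbb R^n:|x-y|<\alpha t\}$, whose Lebesgue measure equals $v_n(\alpha t)^n=v_n\alpha^n t^n$. Substituting this into the swapped integral gives
\[
\|S_{\alpha,\psi}f\|_{L^2}^2=v_n\alpha^n\iint_{\mathbb R^{n+1}_+}|\psi_t f(y)|^2\,\frac{dy\,dt}{t}.
\]

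The same computation with $\alpha=1$ yields $\|S_{1,\psi}f\|_{L^2}^2=v_n\iint_{\mathbb R^{n+1}_+}|\psi_t f(y)|^2\,dy\,dt/t$, and taking the quotient gives $\|S_{\alpha,\psi}f\|_{L^2}^2=\alpha^n\|S_{1,\psi}f\|_{L^2}^2$, which is the claim after extracting square roots. There is no real obstacle here; the only mild subtlety is making sure that Fubini applies without a finiteness hypothesis, which is fine because the integrand is non-negative and measurability follows from the joint measurability of $(x,y,t)\mapsto |\psi_t f(y)|^2 1_{\Gamma_\alpha(x)}(y,t)$. Note that the assumption $\alpha\ge 1$ plays no role in the identity itself; it is only needed elsewhere in the paper to make sure the cone is wide enough for related pointwise estimates.
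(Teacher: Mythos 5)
Your proposal is correct and follows essentially the same route as the paper: square the norm, apply Tonelli's theorem, evaluate the inner $x$-integral as the volume $v_n(\alpha t)^n$ of the ball $\{|x-y|<\alpha t\}$, and compare with the $\alpha=1$ case. No differences worth noting.
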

 
 \begin{proof}
 Let $f \in L^2({\mathbb R}^n)$.
 Then
 \begin{align*}
\|S_{\alpha,\psi}f\|_{L^2}^2 
&= \int_{\Rn} \left( \iint_{\Gamma_{\alpha(x)}} |\psi_tf(y)|^2 \frac{dy dt}{t^{n+1}} \right) dx \\
&=\iint_{{\mathbb R}^{n+1}_+} |\psi_tf(y)|^2 \left( \int_{\Rn} 1_{\{|y-x|<\alpha t \}}(x) dx \right) \frac{dydt}{t^{n+1}} \\
&= v_n \alpha^n \iint_{{\mathbb R}^{n+1}_+} |\psi_tf(y)|^2 \frac{dydt}{t}. 
\end{align*}
If we let $\alpha=1$ in the above, then we obtain
\[
\|S_{1,\psi}f\|_{L^2}^2 
=v_n \iint_{{\mathbb R}^{n+1}_+} |\psi_tf(y)|^2 \frac{dydt}{t}.
\]
From these two equalities, we obtain the desired result.
\end{proof}

We will use the following lemma several times:
\begin{lemma}\label{auxiliary} Let $\alpha \geq 1$. 
Let $w$ be a modulus of continuity satisfying the Dini condition.
\begin{enumerate}
\item[$(a)$]
$\displaystyle
\left(
\int_0^\infty
\left|
\frac{1}{(1+t)^{n}}
w\left(\frac{4t}{t+1}\right)
\right|^2\frac{dt}{t}
\right)^{\frac12} \lesssim [w]_{\rm{Dini}}.
$
\item[$(b)$]
$\displaystyle
\int_0^\infty
\left\{
\frac{1}{(t+1)^{n}}
w\left((1+\alpha)t\right)\right\}^2\frac{dt}{t} \lesssim 
\log (2+\alpha)[w]_{\rm{Dini}}^2.
$
\item[$(c)$] 
$\displaystyle\sum_{k=1}^{\infty} w\left( \frac{1+\alpha}{2^{k+1}} \right) \lesssim \log(2+\alpha) [w]_{\rm{Dini}}.
$
\item[$(d)$]
$\displaystyle
\int_0^\alpha w(t)\frac{dt}{t} \lesssim \log(2+\alpha)[w]_{\rm{Dini}}.
$
\item[$(e)$]Let $m \in \mathbb{N}$. Then
$[{w}]_{\rm{Dini}}\le[w ^{\frac{1}{m}}]^{m}_{\rm{Dini}}$.
\end{enumerate}

\end{lemma}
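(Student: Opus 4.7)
The plan is to exploit three simple facts about $w$ throughout the argument: monotonicity on $(0,1]$, the extension $w(t)=w(1)$ for $t>1$, and the a priori bound $w(1)\le[w]_{\rm{Dini}}$. I would handle (c) and (d) first as warm-ups, since they are pure integral estimates. For (d), split $\int_0^\alpha w(t)\,dt/t=\int_0^1+\int_1^\alpha$; the first piece is by definition $\le[w]_{\rm{Dini}}$, while the second equals $w(1)\log\alpha\le[w]_{\rm{Dini}}\log(2+\alpha)$. For (c), compare the sum to an integral via monotonicity: for each $k\ge 1$, $w((1+\alpha)2^{-k-1})\log 2\le\int_{(1+\alpha)2^{-k-1}}^{(1+\alpha)2^{-k}}w(s)\,ds/s$, and the dyadic intervals on the right partition $(0,(1+\alpha)/2]$, so summing yields $\sum_k w((1+\alpha)2^{-k-1})\lesssim\int_0^{(1+\alpha)/2}w(s)\,ds/s$, which is controlled by (d).

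For (a), I would first bound $w(4t/(t+1))\le w(4t)$ using monotonicity together with $4t/(t+1)\le 4t$, and then split the range at $t=1/4$. On $(1/4,\infty)$ the extension makes $w(4t)=w(1)$, and polynomial decay $(1+t)^{-2n}$ yields a finite integral bounded by a constant multiple of $w(1)^2\le[w]_{\rm{Dini}}^2$. On $(0,1/4)$ the substitution $s=4t$ reduces matters to $\int_0^1 w(s)^2\,ds/s$, which is dominated by $w(1)\int_0^1 w(s)\,ds/s\le[w]_{\rm{Dini}}^2$ since $w(s)\le w(1)$ on $(0,1)$.

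Part (b) is the place where the extra factor $\log(2+\alpha)$ actually materialises, and I expect it to be the principal obstacle. I would split the range at $t_0:=1/(1+\alpha)$. On $(0,t_0]$ the substitution $s=(1+\alpha)t$ removes $\alpha$ and converts the integrand to at most $w(s)^2/s$ on $(0,1)$, which is handled as in (a). On $[t_0,1]$, monotonicity and the extension give $w((1+\alpha)t)\le w(1+\alpha)=w(1)$, while $(1+t)^{-n}\le 1$, so the integrand is at most $w(1)^2/t$; integrating contributes $\log(1/t_0)\cdot w(1)^2=\log(1+\alpha)\cdot w(1)^2$, which is precisely the origin of the logarithmic factor. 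Finally, on $[1,\infty)$ the decay $(1+t)^{-2n}$ renders the tail uniformly finite. Combining the three pieces gives the claimed $\log(2+\alpha)[w]_{\rm{Dini}}^2$ bound.

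For (e), set $g=w^{1/m}$, $A=\int_0^1 g(t)\,dt/t$, and $B=g(1)$, so that $[w^{1/m}]_{\rm{Dini}}=A+B$. Since $g$ is increasing, $g(t)\le g(1)=B$ on $(0,1)$, hence $g(t)^m\le B^{m-1}g(t)$. Integrating and adding $g(1)^m=B^m$ yields $[w]_{\rm{Dini}}=\int_0^1 g(t)^m\,dt/t+g(1)^m\le B^{m-1}A+B^m=B^{m-1}(A+B)\le(A+B)^m$, where the last inequality uses $A\ge 0$. This is exactly $[w^{1/m}]_{\rm{Dini}}^m$.
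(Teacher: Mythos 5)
Your proof is correct and follows essentially the same strategy as the paper: split each integral at natural scales (typically $1/(1+\alpha)$ and $1$), exploit monotonicity and the convention $w(t)=w(1)$ for $t>1$, and use $w(1)\le[w]_{\rm{Dini}}$ together with $w(s)^2\le w(1)w(s)$ on $(0,1)$. The only notable small difference is that in (e) you track the $w(1)$ term of $[w]_{\rm{Dini}}$ explicitly via the identity $\int_0^1 g^m\,dt/t + g(1)^m \le B^{m-1}(A+B)\le(A+B)^m$, whereas the paper's displayed computation bounds only the integral part; your version closes that minor bookkeeping gap.
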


\begin{proof}\
\begin{enumerate}
\item[$(a)$] 
Simply change variables $u=\frac{4t}{t+1}$.
We omit further details.
\item[$(b)$]
Write $I_1'$ for the integral on the left-hand side.
We decompose the integral defining $I^{'}_{1}$ into three parts.
\begin{align*}
I^{'}_{1}&=\int_{0}^{\frac{1}{1+\alpha}} \cdots +\int_{\frac{1}{1+\alpha}} ^1 \cdots+ \int_{1}^{\infty} \cdots\\
&\leq w(1) \int_{0}^{\frac{1}{1+\alpha}} \frac{w\left((2+\alpha)t\right)}{t} dt 
+ w^2(1) \int_{\frac{1}{1+\alpha}} ^1 \frac{dt}{t} + w^2(1) \int_{1}^{\infty} \frac{dt}{(t+1)^{2n}}.\\
&\leq w(1) [w]_{\rm{Dini}} + w^2(1) \log (2+\alpha) + w^2(1)\\
&\lesssim \log (2+\alpha)[w]_{\rm{Dini}}^2.
\end{align*}
\item[$(c)$] 
Write $I_2'$ for the sum on the left-hand side.
By the integral test, we have
 \begin{align*}
 \sum_{k=1}^{\infty} w\left( \frac{1+\alpha}{2^{k+1}} \right) &\leq w(1)\log(2+\alpha) + \int_{1}^{\infty} 
 w\left( \frac{1+\alpha}{2^{x}} \right) dx\\
 &\lesssim w(1)\log(2+\alpha)+ \int_{0}^{ \frac{1+\alpha}{2}} \frac{w(t)}{t} dt.\\
 &\lesssim \log(2+\alpha)[w]_{\rm{Dini}}+ \int_{0}^{ 1}\frac{w(t)}{t} dt + w(1) \int_{1}^{\alpha} \frac{1}{t} dt. \\
 &\lesssim \log(2+\alpha)[w]_{\rm{Dini}}.
 \end{align*}
 \item[$(d)$]
 Argue similar to $(c)$.
 \item[$(e)$]
 We calculate
$$
\int_{0}^{1} \frac{w(t)}{t}dt 
\leq w^{\frac{m-1}{m}}(1) \int_{0}^{1} \frac{w^{\frac1m}(t)}{t}dt 
\leq [w ^{\frac{1}{m}}]^{m-1}_{\rm{Dini}} [w ^{\frac{1}{m}}]_{\rm{Dini}}.
$$
 \end{enumerate}
 \end{proof}

The above estimates concern
how to use the Dini condition
for a $1$-dimensional integral.
Now we show how to use
it for the integral over ${\mathbb R}^n$.
For a cube $Q$,
we denote by $c(Q)$ its center and
by $\ell(Q)$ its side-length.
\begin{lemma}\label{lem:210801-1}
Let $w$ be the modulus of continuity satisfying the Dini condition.
Then
for any fixed $m\in \mathbb N$, we have
\[
\sum_{k=1}^\infty
2^{-k n/2}
\int_{{\mathbb R}^n}
\frac{1}{(2^k\ell(Q)+|x-c(Q)|)^n}
w\left(\frac{2^{k+m}\ell(Q)}{2^k\ell(Q)+|x-c(Q)|}\right)dx\lesssim
[w]_{\rm Dini}.
\]

\end{lemma}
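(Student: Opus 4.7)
The plan is to use a translation and dilation to collapse each term of the sum to a single universal integral, and then evaluate that integral via polar coordinates and a substitution that converts it into a weighted one-dimensional integral of $w$ to which the Dini condition applies directly.

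First I would translate and rescale: set $y=x-c(Q)$ and then $z=y/(2^{k}\ell(Q))$. The expression $2^{k}\ell(Q)+|x-c(Q)|$ becomes $2^{k}\ell(Q)(1+|z|)$, while the argument $2^{k+m}\ell(Q)/(2^{k}\ell(Q)+|y|)$ becomes $2^{m}/(1+|z|)$. The Jacobian $(2^{k}\ell(Q))^{n}$ exactly cancels the factor $(2^{k}\ell(Q))^{n}$ produced by the denominator, so for every $k\ge 1$ and every cube $Q$ the inner integral equals
\[
I_{m}:=\int_{\Rn}\frac{1}{(1+|z|)^{n}}\,w\!\left(\frac{2^{m}}{1+|z|}\right)dz,
\]
a quantity that depends only on $m$ and $n$ and, crucially, not on $k$ or $Q$.

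Next I would bound $I_{m}$. Passing to polar coordinates and using the elementary inequality $r^{n-1}/(1+r)^{n}\le 1/(1+r)$ gives
\[
I_{m}\lesssim \int_{0}^{\infty}\frac{1}{1+r}\,w\!\left(\frac{2^{m}}{1+r}\right)dr,
\]
and the substitution $t=2^{m}/(1+r)$ turns this into $\int_{0}^{2^{m}}w(t)t^{-1}\,dt$. Using the convention $w(t)=w(1)$ for $t>1$, splitting at $t=1$, and invoking the inequality $w(1)\le [w]_{\rm{Dini}}$ (or, equivalently, applying Lemma \ref{auxiliary}(d) with $\alpha=2^{m}$) yields $I_{m}\lesssim [w]_{\rm{Dini}}$, where the implicit constant depends on the fixed parameter $m$.

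Finally, the sum over $k$ is immediate: since $I_{m}$ is independent of $k$,
\[
\sum_{k=1}^{\infty}2^{-kn/2}\int_{\Rn}\frac{1}{(2^{k}\ell(Q)+|x-c(Q)|)^{n}}\,w\!\left(\frac{2^{k+m}\ell(Q)}{2^{k}\ell(Q)+|x-c(Q)|}\right)dx=\Big(\sum_{k=1}^{\infty}2^{-kn/2}\Big)I_{m}\lesssim [w]_{\rm{Dini}},
\]
which is the desired estimate. There is no real obstacle; the only point that demands attention is verifying that the dilation by $2^{k}\ell(Q)$ simultaneously normalizes both the denominator $(2^{k}\ell(Q)+|x-c(Q)|)^{n}$ and the inner argument of $w$, and it is precisely the presence of the extra factor $2^{m}$ in the numerator inside $w$ that leaves the pure number $2^{m}$ after the rescaling rather than something depending on $k$.
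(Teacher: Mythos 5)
Your proof is correct. The key observation — that the substitution $z=(x-c(Q))/(2^{k}\ell(Q))$ simultaneously cancels the Jacobian against the denominator and reduces the argument of $w$ to $2^{m}/(1+|z|)$, leaving an integral $I_{m}$ independent of $k$ and $Q$ — is a genuinely slicker route than the paper's. The paper instead fixes $k$ and performs a dyadic annular decomposition over $2^{k}Q$ and $2^{k+l}Q\setminus 2^{k+l-1}Q$ for $l\ge 1$, bounding the integrand piecewise to produce the series $w(2^{m})+\sum_{l\ge 1}w(2^{m-l+1})\lesssim [w]_{\rm Dini}$; this is in effect a Riemann-sum discretization of the same $\int_{0}^{2^{m}}w(t)\,t^{-1}\,dt$ that you obtain in one stroke by polar coordinates and the substitution $t=2^{m}/(1+r)$. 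Your version also makes manifest that the factor $2^{-kn/2}$ is needed only for summability in $k$ and plays no role in taming the integral itself, a fact that is obscured in the paper's bookkeeping. The two arguments are equivalent in substance, but yours is shorter and exposes the structure more cleanly; the paper's dyadic decomposition has the advantage of being the same technique used uniformly throughout its other lemmas (e.g.\ Lemma~\ref{auxiliary} and Corollary~\ref{cor:220330-1}), which keeps the exposition stylistically consistent.
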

\begin{proof}
Fix $k \in {\mathbb N}$.
We calculate
\begin{align*}
\lefteqn{
\int_{{\mathbb R}^n}
\frac{1}{(2^k\ell(Q)+|x-c(Q)|)^n}
w\left(\frac{2^{k+m}\ell(Q)}{2^k\ell(Q)+|x-c(Q)|}\right)dx
}\\
&=
\int_{2^k Q}
\frac{1}{(2^k\ell(Q)+|x-c(Q)|)^n}
w\left(\frac{2^{k+m}\ell(Q)}{2^k\ell(Q)+|x-c(Q)|}\right)dx\\
&\quad+
\sum_{l=1}^\infty
\int_{2^{k+l}Q \setminus 2^{k+l-1}Q}
\frac{1}{(2^k\ell(Q)+|x-c(Q)|)^n}
w\left(\frac{2^{k+m}\ell(Q)}{2^k\ell(Q)+|x-c(Q)|}\right)dx\\
&\le
\int_{2^k Q}
\frac{1}{(2^k\ell(Q))^n}w(2^m)dx+
\sum_{l=1}^\infty
\int_{2^{k+l}Q \setminus 2^{k+l-1}Q}
\frac{1}{(2^{k+l-2}\ell(Q))^n}
w\left(\frac{2^{k+m}\ell(Q)}{2^{k+l-2}\ell(Q)}\right)dx
\\
&\lesssim
w(2^m)+\sum_{l=1}^{\infty}w(2^{m-l+1})
\\
&\lesssim
[w]_{\rm Dini}.
\end{align*}
Thus, if we add this estimate over $k$
after we multiply it by $2^{-kn/2}$,
then we obtain the desired result.
\end{proof}
Similar to Lemma \ref{lem:210801-1},
we use the following estimate:
\begin{cor}\label{cor:220330-1}
Let $w$ be the modulus of continuity satisfying the Dini condition.
Then
for any cube $Q$, 
\begin{equation}\label{eq:211112-4}
\int_{|x-c(Q)|>32n\ell(Q)}
\frac{1}{|x-c(Q)|^{n}}
w\left(\frac{2\sqrt{n}\ell(Q)}{|x-c(Q)|}\right)dx
\lesssim
[w]_{\rm{Dini}}.
\end{equation}
\end{cor}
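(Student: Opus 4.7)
The plan is to reduce the integral to a dyadic sum over spherical annuli and then invoke the Dini condition via the integral test, essentially mimicking the inner summation in the proof of Lemma \ref{lem:210801-1} but with $k=0$ fixed.

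First I would write $r = \ell(Q)$ and decompose the domain of integration as
\[
\{x : |x-c(Q)| > 32n\,r\} = \bigcup_{l=0}^{\infty} A_l,
\qquad
A_l := \{x : 2^{l}\cdot 32 n\, r < |x - c(Q)| \le 2^{l+1}\cdot 32 n\, r\}.
\]
On $A_l$ one has the elementary bounds
\[
|x - c(Q)|^{-n} \le (2^{l}\cdot 32 n\, r)^{-n},
\qquad
\frac{2\sqrt{n}\,r}{|x-c(Q)|} \le 2^{-l-4}/\sqrt{n},
\]
and, since $w$ is increasing, the second inequality yields $w\bigl(2\sqrt{n}\,r/|x-c(Q)|\bigr) \le w(2^{-l-4}/\sqrt{n})$. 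Since $|A_l| \lesssim (2^{l}\cdot 32n\,r)^{n}$, the integral over $A_l$ is bounded by a constant (independent of $Q$ and $l$) times $w(2^{-l-4}/\sqrt{n})$.

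Summing in $l$ and applying the integral comparison, I would conclude
\[
\int_{|x-c(Q)|>32n\ell(Q)}
\frac{1}{|x-c(Q)|^{n}}
w\left(\frac{2\sqrt{n}\,\ell(Q)}{|x-c(Q)|}\right)dx
\lesssim
\sum_{l=0}^{\infty} w(2^{-l-4}/\sqrt{n})
\lesssim
\int_{0}^{1} \frac{w(t)}{t}\,dt
\le [w]_{\rm Dini},
\]
where the last inequality uses the definition of $[w]_{\rm Dini}$.

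No step here is genuinely delicate: the scale-invariant decomposition makes the $r = \ell(Q)$ dependence disappear completely, and the only point that deserves care is choosing the base radius $32n\,r$ large enough that the ratio $2\sqrt{n}\,r/|x-c(Q)|$ stays inside $(0,1)$ on every annulus, so that $w$ is genuinely being evaluated on its domain and the Dini condition is directly applicable. The minor bookkeeping constants like $32n$ and $2\sqrt{n}$ only affect the implicit constant absorbed into $\lesssim$.
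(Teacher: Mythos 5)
Your proposal is correct. It differs slightly in form from the paper's proof: the paper passes directly to the radial integral via polar coordinates, $\int_{32n\ell(Q)}^\infty w(2\sqrt n\,\ell(Q)/r)\,dr/r$, and then performs the substitution $s=2\sqrt n\,\ell(Q)/r$ to land on $\int_0^{c}w(s)\,ds/s$ with $c\le 1$, which is $\le[w]_{\rm Dini}$ immediately. You instead decompose into dyadic annuli $A_l$, bound the integral over each annulus by $w(2^{-l-4}/\sqrt n)$ up to constants, and then run the integral test to compare the resulting sum with $\int_0^1 w(t)\,dt/t$; this is precisely the mechanism used in the paper's Lemma~\ref{lem:210801-1}, so you have, in effect, proved the corollary by the method of that lemma rather than by the cleaner one-line change of variables the paper uses for the corollary itself. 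The two arguments are equivalent in content: both rely on the scale invariance that makes the $\ell(Q)$-dependence drop out and both use monotonicity of $w$ together with the Dini integral. The only point worth spelling out a bit more in your write-up is the final comparison $\sum_l w(2^{-l-4}/\sqrt n)\lesssim\int_0^1 w(t)\,dt/t$, which needs the monotonicity of $w$ to dominate each term by $\frac{1}{\log 2}\int_{2^{-l-4}/\sqrt n}^{2^{-l-3}/\sqrt n} w(t)\,dt/t$; you correctly flag this as ``the integral comparison'' but it is worth stating the one-line justification explicitly.
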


\begin{proof}
Similar to Lemma \ref{lem:210801-1},
we calculate
\begin{align*}
&\int_{|x-c(Q)|>32n\ell(Q)}
\frac{1}{|x-c(Q)|^{n}}
w\left(\frac{2\sqrt{n}\ell(Q)}{|x-c(Q)|}\right)dx
\\
&\sim\int_{32n\ell(Q)}^{\infty}\frac{w(2\sqrt{n}\ell(Q)/r)}{r}dr
\sim \int_{0}^{2/(3\sqrt{n})}{w(s)}\frac{ds}{s}
\le
[w]_{\rm{Dini}}.
\end{align*}
\end{proof}

We indicate how to use the above estimates
for Littlewood--Paley operators.
\begin{lemma}\label{lem:220411-1}Let $Q$ be a cube and let $f$ be an integrable function supported
on the cube $Q$.
Assume that
the moment condition $\int\limits_Q f(x)dx=0$
holds.
Then
for all $x \in {\mathbb R}^n$,
\begin{align}
\label{eq:220328-1}
&\lefteqn{
1_{(64n\ell(Q),\infty)}(|x-c(Q)|)S_{1,\psi}f(x)
}\\
\nonumber
&\lesssim
\frac{A\,[w]_{\rm{Dini}}}{|x-c(Q)|^n}
\varphi\left(\frac{2\sqrt{n}\ell(Q)}{|x-c(Q)|}\right)\|f\|_{L^1}
\\
\nonumber
&\hspace{1.5cm}+
\sum_{k=1}^\infty
\frac{A\,2^{-k n/2}}{(2^k\ell(Q)+|x-c(Q)|)^n}
w\left(\frac{2^{k+2}\ell(Q)}{2^k\ell(Q)+|x-c(Q)|}\right)
{\|f\|_{L^1}}.
\end{align}
\end{lemma}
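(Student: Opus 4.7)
The plan is to exploit the moment condition $\int_Q f = 0$ to rewrite
\[
\psi_t f(y) = \frac{1}{t^n}\int_Q [\psi(y/t,z/t) - \psi(y/t,c(Q)/t)]\,f(z)\,dz,
\]
and then to split $\Gamma_1(x)$ according to the scale of $t$ and the position of $y$ relative to $c(Q)$. Concretely, I would set
\[
E_0 := \Gamma_1(x) \cap \{t < \ell(Q)\}, \qquad
E_k^g := \Gamma_1(x) \cap \{2^{k-1}\ell(Q) \le t < 2^k\ell(Q),\ |y-c(Q)| > 2\sqrt{n}\ell(Q)\},
\]
\[
E_k^b := \Gamma_1(x) \cap \{2^{k-1}\ell(Q) \le t < 2^k\ell(Q),\ |y-c(Q)| \le 2\sqrt{n}\ell(Q)\}
\]
for $k \ge 1$. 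Because $|x - c(Q)| > 64n\ell(Q)$, the hypothesis $|z-c(Q)| \le \tfrac12 |y - c(Q)|$ in \eqref{eq-smoothcondition2} is automatically satisfied throughout $E_0$ and every $E_k^g$, while it generically fails on $E_k^b$.

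On $E_0$ and $E_k^g$, smoothness \eqref{eq-smoothcondition2} combined with the moment condition yields, after inserting the scaling $\mathcal{M}1_{Q(y/t,1)}(c(Q)/t)\lesssim t^n/(t+|y-c(Q)|)^n$, the pointwise bound
\[
|\psi_t f(y)| \lesssim \frac{A \|f\|_{L^1}}{(t+|y-c(Q)|)^n}\,\varphi\!\left(\frac{\sqrt{n}\ell(Q)/2}{t+|y-c(Q)|}\right) w\!\left(\frac{t}{t+|y-c(Q)|}\right).
\]
A direct geometric check shows $t + |y-c(Q)| \gtrsim t + |x-c(Q)|$ for every $(y,t)\in \Gamma_1(x)$. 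Combined with the cone-slice volume $v_n t^n$, this reduces the contribution from $E_0$ to estimating $\int_0^{\ell(Q)} w(Ct/|x-c(Q)|)^2\,dt/t \lesssim [w]_{\rm Dini}^2$; taking a square root then produces the first term of \eqref{eq:220328-1}. For each $E_k^g$ I would use monotonicity of $\varphi$ to replace $\varphi(\cdot)$ by the $k$-independent $\varphi(C\ell(Q)/|x-c(Q)|)$ and bound $\sqrt{I_{E_k^g}}$ by a single dyadic block of the form $A\|f\|_{L^1}\,w(\cdot)\,/\,(2^k\ell(Q)+|x-c(Q)|)^n$. Splitting the sum in $k$ at $2^k\ell(Q)\sim|x-c(Q)|$ and comparing to $\int_0^1 w(s)\,ds/s$ via the integral test gives
\[
\sum_{k\ge 1}\frac{w\bigl(C\cdot 2^k\ell(Q)/(2^k\ell(Q)+|x-c(Q)|)\bigr)}{(2^k\ell(Q)+|x-c(Q)|)^n} \lesssim \frac{[w]_{\rm Dini}}{|x-c(Q)|^n},
\]
so that the total good-region contribution is again absorbed into the first term of \eqref{eq:220328-1}.

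On the bad pieces $E_k^b$ smoothness is no longer usable, and I would fall back on the size condition, getting $|\psi_t f(y)| \lesssim A w(1) \|f\|_{L^1}/t^n$. The essential geometric gain is that the $y$-slice of $E_k^b$ has volume $\lesssim \ell(Q)^n$ rather than the full cone-slice volume $\sim t^n\sim (2^k\ell(Q))^n$, because $y$ is trapped in a ball of radius $\lesssim \ell(Q)$ around $c(Q)$. Integrating $t^{-n-1}\,dt$ over $t \sim 2^k\ell(Q)$ then yields $I_{E_k^b} \lesssim A^2 w(1)^2 \|f\|_{L^1}^2\,\ell(Q)^n/(2^k\ell(Q))^{3n}$, whose square root is $\lesssim A\cdot 2^{-kn/2}\,w(1)\,\|f\|_{L^1}/(2^k\ell(Q))^n$. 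Since $E_k^b$ is nonempty only once $2^k\ell(Q)\gtrsim|x-c(Q)|$, the argument of $w$ appearing in the second term of \eqref{eq:220328-1} is then comparable to $1$, and one recovers exactly the $k$-th summand. Combining the three contributions via $\sqrt{A+B+C}\le\sqrt{A}+\sqrt{B}+\sqrt{C}$ finishes the proof. The main obstacle is identifying that the factor $2^{-kn/2}$ is nothing but the square root of the volume ratio $\ell(Q)^n/(2^k\ell(Q))^n$ between the small near-$Q$ slice and the full cone slice, and that this near-$Q$ region is precisely where smoothness must be abandoned in favour of the size estimate.
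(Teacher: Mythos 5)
Your proposal is correct and follows essentially the same route as the paper's own proof: both use the moment condition to rewrite $\psi_t f$ as an integral of $\psi(y/t,z/t)-\psi(y/t,c(Q)/t)$, split the cone according to whether the apex $y=x+z$ is within $O(\ell(Q))$ of $c(Q)$ (where the size condition together with the $O(\ell(Q)^n)$ slice volume produces the $2^{-kn/2}$ dyadic sum) or far from $c(Q)$ (where the smoothness condition \eqref{eq-smoothcondition2} is legitimately applicable and produces the $\varphi$ factor together with $[w]_{\rm Dini}$). The only organizational difference is that you decompose explicitly into dyadic slabs in $t$ from the outset, whereas the paper first applies Minkowski's integral inequality and then the $\sqrt{A+B}\le\sqrt A+\sqrt B$ trick to reach the same dyadic block structure.
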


\begin{proof}
We seek to estimate
\[
\left(
\iint_{\Gamma_1(0)}
\left|
\frac{1}{t^n}
\int_{{\mathbb R}^n}\psi\left(\frac{x+z}{t},\frac{y}{t}\right)f(y)dy
\right|^2\frac{dzdt}{t^{n+1}}
\right)^{\frac12}
.\]
We start with a preparatory geometric observation:
Let $y\in Q$, $(z,t) \in \Gamma_1(0)$.
We use the following geometric observation:
\begin{lemma}
Let $Q$ be a cube and $(z,t) \in \Gamma_1(0)$.
Then
for all $y \in Q$ and $x \in {\mathbb R}^n$
satisfying $|x-c(Q)|>4n \ell(Q)$, we have
\begin{equation}\label{eq:220330-201}
4( t+ |x+z-y|)
> t+ |x-c(Q)|+\ell(Q).
\end{equation}
\end{lemma}
\begin{proof}
Since $t \ge|z|$,
\begin{align*}
4( t+ |x+z-y|)
&\ge
4t+2|x+z-y|\\
&\ge
2t+2|x-y|\\
&\ge
t+|x-c(Q)|+\ell(Q)+(|x-c(Q)|-2|y-c(Q)|-\ell(Q)).
\end{align*}
Thus, by assumption,
we obtain
(\ref{eq:220330-201}).
\end{proof}
We will decompose the integral defining
$S_{1,\psi}f(x)$ into two parts:
\begin{enumerate}
\item
Assume first that
\[
32n\ell(Q)>|x+z-c(Q)|.
\]
Let $y\in Q$, $|x-c(Q)|>64n\ell(Q)$, and let $|z|\le t$. 
Then 
a geometric observation shows that
\[
64n\ell(Q) <|x-c(Q)| 
\leq |x+z-c(Q)|+|z| < 32n \ell(Q) + t
\]
for all $(z,t) \in \Gamma_1(0)$.
Consequently, we have
\begin{equation}\label{eq:220402-1}
t> 32n \ell(Q).
\end{equation} 
By using 
(\ref{eq-sizecondition})
and
(\ref{eq:220330-201}),
we estimate
\begin{align*}
\left|
\frac{1}{t^n}
\int_{{\mathbb R}^n}
\psi\left(\frac{x+z}{t},\frac{y}{t}\right)f(y)dy
\right|
&\lesssim
\int_{{\mathbb R}^n}
\frac{1}{(t+|x+z-y|)^n}
w\left(\frac{t}{t+|x+z-y|}\right)|f(y)|dy\\
&\lesssim
\int_{Q}
\frac{1}{(t+|x-c(Q)|)^n}
w\left(\frac{4t}{t+|x-c(Q)|}\right)|f(y)|dy.
\end{align*}
Hence,
by applying Minkowski's inequality
to the above estimate
and then using (\ref{eq:220402-1}),
we have
\begin{align*}
\lefteqn{
\left(
\iint_{(z,t) \in \Gamma_1(0), |x+z-c(Q)|< 32n\ell(Q)}
\left|
\frac{1}{t^n}
\int_{{\mathbb R}^n}\psi\left(\frac{x+z}{t},\frac{y}{t}\right)f(y)dy
\right|^2\frac{dzdt}{t^{n+1}}
\right)^{\frac12}
}
\\
&\lesssim
\|f\|_{L^1}
\left(
\iint_{(z,t) \in \Gamma_1(0)}
\left|
\frac{1_{\{z: |x+z-\ell(Q) | < 32 n \ell(Q) \} \times [\ell(Q), \infty)}(z,t)}{(t+|x-c(Q)|)^n}
w\left(\frac{4t}{t+|x-c(Q)|}\right)
\right|^2\frac{dzdt}{t^{n+1}}
\right)^{\frac12}
\\
&\sim
\|f\|_{L^1}
\left(
\int_{\ell(Q)}^\infty\ell(Q)^{n}
\left|
\frac{1}{(t+|x-c(Q)|)^n}
w\left(\frac{4t}{t+|x-c(Q)|}\right)
\right|^2\frac{dt}{t^{n+1}}
\right)^{\frac12}.
\end{align*}
By using
the inequality 
\[
\sqrt{A+B} \le \sqrt{A}+\sqrt{B}
\] for $A,B \ge 0$,
we have
\begin{align*}
\lefteqn{
\left(
\iint_{(z,t) \in \Gamma_1(0), |x+z-c(Q)|< 32n\ell(Q)}
\left|
\frac{1}{t^n}
\int_{{\mathbb R}^n}\psi\left(\frac{x+z}{t},\frac{y}{t}\right)f(y)dy
\right|^2\frac{dzdt}{t^{n+1}}
\right)^{\frac12}
}
\\
&\lesssim
\|f\|_{L^1}
\sum_{k=1}^\infty
2^{-k n/2}
\frac{1}{(2^k\ell(Q)+|x-c(Q)|)^n}
w\left(\frac{2^{k+2}\ell(Q)}{2^k\ell(Q)+|x-c(Q)|}\right).
\end{align*}
Thus,
the estimate for $x \in {\mathbb R}^n$ satisfying
$|x+z-c(Q)|<32n\ell(Q)$ is valid;
the most right-hand side we obtained
matches
the second term of the right-hand side of 
(\ref{eq:220328-1}).
\item
Assume instead $32n\ell(Q)<|x+z-c(Q)|$. 
Let $y\in Q$, $|x-c(Q)|>64n\ell(Q)$ and $(z,t) \in \Gamma_1(0)$.
We estimate by using
$|y-c(Q)| \le 2\sqrt{n}\ell(Q)$,
$y \in Q$,
 \eqref{eq-smoothcondition2}
and (\ref{eq:220330-201})
as well as the moment condition on $f$,
\begin{align*}
\lefteqn{
\left|
\frac{1}{t^n}
\int_{{\mathbb R}^n}
\psi\left(\frac{x+z}{t},\frac{y}{t}\right)f(y)dy
\right|
}
\\
&=
\left|
\frac{1}{t^n}
\int_{{\mathbb R}^n}
\psi\left(\frac{x+z}{t},\frac{y}{t}\right)f(y)dy-
\frac{1}{t^n}
\int_{{\mathbb R}^n}
\psi\left(\frac{x+z}{t},\frac{c(Q)}{t}\right)f(y)dy
\right|
\\
&\lesssim
\int_{{\mathbb R}^n}
\frac{1}{(t+|x+z-y|)^{n}}\varphi\left(\frac{|y-c(Q)|}{t+|x+z-y|}\right)
w\left(\frac{t}{t+|x+z-y|}\right)|f(y)|dy
\\
&\lesssim
\int_{{\mathbb R}^n}
\frac{1}{(t+|x-c(Q)|)^{n}}
\varphi\left(\frac{2\sqrt{n}\ell(Q)}{|x-c(Q)|}\right)
w\left(\frac{4t}{t+|x-c(Q)|}\right)|f(y)|dy.
\end{align*}
If we combine this estimate
with Minkowski's inequality,
\begin{align*}
\lefteqn{
\left(
\iint_{(z,t) \in \Gamma_1(0), |x+z-c(Q)|\ge 32n\ell(Q)}
\left|
\frac{1}{t^n}
\int_{{\mathbb R}^n} 
\psi\left(\frac{x+z}{t},\frac{y}{t}\right)f(y)dy
\right|^2\frac{dzdt}{t^{n+1}}
\right)^{\frac12}
}
\\
&\lesssim
\left(
\iint_{\Gamma_1(0)}
\left|
\frac{1}{(t+|x-c(Q)|)^{n}}
\varphi\left(\frac{2\sqrt{n}\ell(Q)}{|x-c(Q)|}\right)
w\left(\frac{4t}{t+|x-c(Q)|}\right)
\right|^2\frac{dzdt}{t^{n+1}}
\right)^{\frac12}
\|f\|_{L^1}
\\
&\sim
\left(
\int_0^\infty
\left|
\frac{1}{(t+|x-c(Q)|)^{n}}
\varphi\left(\frac{2\sqrt{n}\ell(Q)}{t+|x-c(Q)|}\right)
w\left(\frac{4t}{t+|x-c(Q)|}\right)
\right|^2\frac{dt}{t}
\right)^{\frac12}
\|f\|_{L^1}
\\
&\lesssim
\frac{1}{|x-c(Q)|^{n}}
\varphi\left(\frac{2\sqrt{n}\ell(Q)}{|x-c(Q)|}\right)
\left(
\int_0^\infty
\left|
\frac{1}{(1+t)^{n}}
w\left(\frac{4t}{t+1}\right)
\right|^2\frac{dt}{t}
\right)^{\frac12}
\|f\|_{L^1}.
\end{align*}
By Lemma \ref{auxiliary} (a)
and
(\ref{eq:211112-4}),
\begin{eqnarray*}
&&
\left(
\iint_{(z,t) \in \Gamma_1(0), |x+z-c(Q)|\ge 32n\ell(Q)}
\left|
\frac{1}{t^n}
\int_{{\mathbb R}^n} 
\psi\left(\frac{x+z}{t},\frac{y}{t}\right)f(y)dy
\right|^2\frac{dzdt}{t^{n+1}}
\right)^{\frac12}
\\
&&\lesssim
\frac{[w]_{\rm{Dini}}}{|x-c(Q)|^{n}}
\varphi\left(\frac{2\sqrt{n}\ell(Q)}{|x-c(Q)|}\right)
\|f\|_{L^1}.
\end{eqnarray*}
Thus, the estimate for $x \in {\mathbb R}^n$ satisfying $|x+z-c(Q)|\ge 32n\ell(Q)$ is valid;
the most right-hand side we obtained
matches the first term on the right-hand side of 
(\ref{eq:220328-1}).
\end{enumerate}
Thus, we get the desired estimate 
\eqref{eq:220328-1}.
\end{proof}

\subsection{Weak $L^1$-boundedness of $S_{\alpha,\psi}$--Proof of Theorem \ref{prop:201226-2}}\label{sec2}

Let $\rho>0$ and $f \in L^1({\mathbb R}^n)$ be fixed.
We let $E_\rho=\{x \in {\mathbb R}^n\,:\,|f(x)|>\rho\}$.
We write 
Let $\{Q_j\}_{j \in J}$ be a family of bounded-overlapping
(maximal) cubes
that decomposes 
$\{x \in {\mathbb R}^n\,:\,M_{{\mathcal D}}f(x)>\rho\}$
with the property that
\[
\rho \le \langle f \rangle_{1,Q_j} \le 2^{n}\rho
\quad (j \in J).
\]
For details,
see \cite{Duoandikoetxea01}, for example.

 

We prove Theorem \ref{prop:201226-2}.
We may assume $\alpha=1$
in view of (\ref{eq:211107-1}).
Form the Calder\'{o}n-Zygmund decomposition
of $f$ at height $\rho$.
Denote by $g$ the good part and set $b=\sum\limits_{j \in J} b_j=f-g$.
Here each $b_j$, $j \in J$ is given by
\[
b_j:=1_{Q_j}\left(f-\fint_{Q_j}f(y)dy\right).
\]
We have
\begin{equation}\label{eq:220330-1}
\rho|\{x \in {\mathbb R}^n\,:\,|S_{1,\psi}(g)(x)|>\rho\}|
\lesssim
\|S_{1,\psi}\|_{L^2 \to L^2}^2
\|f\|_{L^1},
\end{equation}
since $S_{1,\psi}$ is assumed to be $L^2$-bounded.

Set $\Omega:=\bigcup\limits_{j \in J} 64nQ_j$.
Remark that
\begin{equation}\label{eq:220406-1}
|\Omega| \lesssim \frac{1}{\rho}\|f\|_{L^1}.
\end{equation}
Thanks to Lemmas \ref{lem:220411-1}
and \ref{lem:210801-1}
and 
Corollary \ref{cor:220330-1},
\begin{equation}\label{eq:weak-estimate-10}
\int_{{\mathbb R}^n \setminus \Omega}S_{1,\psi}b_j(x)dx
\lesssim[w]_{\rm Dini}([\varphi]_{\rm Dini}+1)
\|b_j\|_{L^1}\lesssim[w]_{\rm Dini}([\varphi]_{\rm Dini}+1)
\|f\|_{L^1(Q_j)}
\end{equation}
for each $j \in J$.
Estimate (\ref{eq:weak-estimate-10})
will complete the proof, since
(\ref{eq:weak-estimate-10})
yields the control of $b$, while
(\ref{eq:220330-1}) takes care of $g$.
Thus, along with (\ref{eq:220406-1}), we obtain
\[
\rho|\{x \in {\mathbb R}^n\,:\,
S_{1,\psi}f(x)>\rho\}|
\lesssim
(\|S_{1,\psi}\|_{L^2 \to L^2}^{2}
+
1
+A[w]_{\rm Dini}([\varphi]_{\rm Dini}+1))\|f\|_{L^1},
\]
proving Theorem \ref{prop:201226-2}.


\subsection{Weak $L^1$-boundedness of ${\mathcal M}_{S_{\alpha,\psi}}$ and 
${\mathcal N}_{S_{\alpha,\psi}}$}\label{secweak2}

This section is an auxiliary step,
where we deal with Lerner's maximal function
\cite{Newyork}.
We modify its definition so that it is adapted to the $\ell^2$-valued case.
Let $x \in {\mathbb R}^n$.
We consider
\[
{\mathcal M}_{S_{\alpha,\psi}}f(x)
=
\sup_{Q}1_Q(x)
\sqrt{|S_{\alpha,\psi}f(x)^2-S_{\alpha,\psi}(f \cdot 1_{3Q})(x)^2|}
\quad (x \in {\mathbb R}^n)
\]
and
\[
{\mathcal N}_{S_{\alpha,\psi}}f(x)
=
\sup_{Q}1_Q(x)
S_{\alpha,\psi}(f \cdot 1_{{\mathbb R}^n \setminus 3Q})(x)
\quad (x \in {\mathbb R}^n),
\]
where $Q$ moves over all cubes. 
We will invoke
Kolmogorov's inequality.
See \cite[Lemma 5.16]{Duoandikoetxea01} for the proof.
\begin{lemma}\label{lem:Kol}
Let $S$ be a sublinear or linear operator bounded 
from $L^1({\mathbb R}^n)$ to $L^{1,\infty}({\mathbb R}^n)$.
Then for any set $E$ of finite measure,
we have\[
\int_E \sqrt{|S f(x)|}dx
\lesssim
\sqrt{\|S\|_{L^1 \to L^{1,\infty}}|E|\|f\|_{L^1}}.
\]
\end{lemma}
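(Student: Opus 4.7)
The plan is to prove Lemma \ref{lem:Kol} by the standard layer-cake / distribution-function argument combined with an optimization in a splitting parameter. This is a classical pointwise interpolation-style computation, so the main work is bookkeeping rather than any real obstacle.

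First, I would rewrite the integral using the distribution-function formula. For any nonnegative measurable function $g$ and any set $E$ of finite measure,
\[
\int_E g(x)\,dx=\int_0^\infty |\{x\in E : g(x)>\lambda\}|\,d\lambda.
\]
Applying this with $g(x)=\sqrt{|Sf(x)|}$ and changing the threshold variable gives
\[
\int_E \sqrt{|Sf(x)|}\,dx=\int_0^\infty \bigl|\{x\in E : |Sf(x)|>\lambda^2\}\bigr|\,d\lambda.
\]
Next I would estimate the measure inside the integral by the minimum of the two trivial bounds: on the one hand the set is contained in $E$, and on the other hand the weak-$(1,1)$ hypothesis yields
\[
\bigl|\{x\in{\mathbb R}^n : |Sf(x)|>\lambda^2\}\bigr|\le \frac{\|S\|_{L^1\to L^{1,\infty}}\|f\|_{L^1}}{\lambda^2}.
\]
Combining,
\[
\int_E \sqrt{|Sf(x)|}\,dx\le \int_0^\infty \min\!\left(|E|,\ \frac{\|S\|_{L^1\to L^{1,\infty}}\|f\|_{L^1}}{\lambda^2}\right)d\lambda.
\]

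To finish I would split the last integral at a threshold $\lambda_0>0$, using $|E|$ on $(0,\lambda_0)$ and the weak-type bound on $(\lambda_0,\infty)$. This produces
\[
\int_E \sqrt{|Sf(x)|}\,dx\le |E|\lambda_0+\|S\|_{L^1\to L^{1,\infty}}\|f\|_{L^1}\int_{\lambda_0}^\infty\frac{d\lambda}{\lambda^2}=|E|\lambda_0+\frac{\|S\|_{L^1\to L^{1,\infty}}\|f\|_{L^1}}{\lambda_0}.
\]
Optimizing in $\lambda_0$ (the two terms balance when $\lambda_0=\sqrt{\|S\|_{L^1\to L^{1,\infty}}\|f\|_{L^1}/|E|}$) yields
\[
\int_E \sqrt{|Sf(x)|}\,dx\le 2\sqrt{\|S\|_{L^1\to L^{1,\infty}}|E|\|f\|_{L^1}},
\]
which is exactly the claimed bound with implicit constant $2$.

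There is essentially no hard step here; the only point worth minor care is that sublinearity (or linearity) is used only implicitly, in the sense that the weak-$(1,1)$ bound on $S$ is assumed as a hypothesis, so the argument proceeds purely from the distributional estimate on $|Sf|$. If $f=0$ or $|E|=0$ the inequality is trivial, and otherwise the choice of $\lambda_0$ above is admissible. Hence the lemma follows.
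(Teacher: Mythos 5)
Your proof is correct, and it is the standard layer-cake plus optimization argument; the paper does not prove the lemma itself but cites Duoandikoetxea (Lemma 5.16), which proceeds in essentially the same way. No issues.
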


We seek to show:
\begin{lemma}\label{weakestiamte} 
There exists a constant $D_2>0$,
independent of $\alpha\ge 1$, such that
$$
\|{\mathcal N}_{S_{\alpha,\psi}}\|_{L^1 \to L^{1,\infty}}+
\|{\mathcal M}_{S_{\alpha,\psi}}\|_{L^1 \to L^{1,\infty}}
\le
D_2 \alpha^n\log^{\frac32}(2+\alpha)
([w]_{\rm{Dini}}[\varphi]_{\rm{Dini}}+\|S_{1,\psi}\|_{L^2 \to L^2}).
$$
\end{lemma}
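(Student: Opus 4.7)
My plan is to reduce the bound for $\mathcal{M}_{S_{\alpha,\psi}}$ to that for $\mathcal{N}_{S_{\alpha,\psi}}$ through a pointwise algebraic comparison, and then estimate $\mathcal{N}_{S_{\alpha,\psi}}f$ via a pointwise domination by the Hardy--Littlewood maximal operator $\mathcal{M}f$, from which the weak-type $(1,1)$ bound follows. The second step is the heart of the proof.

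For the first reduction, fix a cube $Q\ni x$ and set $a:=S_{\alpha,\psi}f(x)$, $b:=S_{\alpha,\psi}(f\cdot 1_{3Q})(x)$, $c:=S_{\alpha,\psi}(f\cdot 1_{\mathbb{R}^n\setminus 3Q})(x)$. Minkowski's inequality in the cone space $L^2(\Gamma_\alpha(x),t^{-n-1}\,dy\,dt)$ gives $|a-b|\le c$, and hence
\[
|a^2-b^2|=|a-b|(a+b)\le c(2a+c).
\]
Taking square roots and applying $\sqrt{2ac}\le a+c/2$ by AM--GM, I obtain $\sqrt{|a^2-b^2|}\le a+\tfrac{3}{2}c$, so that passing to the supremum over $Q\ni x$ yields the pointwise bound
\[
\mathcal{M}_{S_{\alpha,\psi}}f(x)\le S_{\alpha,\psi}f(x)+\tfrac{3}{2}\mathcal{N}_{S_{\alpha,\psi}}f(x).
\]
By Theorem \ref{prop:201226-2}, the $L^{1,\infty}$-norm of $S_{\alpha,\psi}f$ is already bounded by a multiple of $\alpha^n([w]_{\rm Dini}(1+[\varphi]_{\rm Dini})+\|S_{1,\psi}\|_{L^2\to L^2})\|f\|_{L^1}$, which is absorbed into the stated right-hand side, so the task reduces to controlling $\mathcal{N}_{S_{\alpha,\psi}}$.

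For the main step I use the size condition \eqref{eq-sizecondition} in rescaled form, $|\psi_t(y,z)|\lesssim A(t+|y-z|)^{-n}w(t/(t+|y-z|))$, together with Minkowski's integral inequality to write $S_{\alpha,\psi}(f\cdot 1_{\mathbb{R}^n\setminus 3Q})(x)\le \int_{\mathbb{R}^n\setminus 3Q}K_\alpha(x,z)|f(z)|\,dz$, where $K_\alpha(x,z)^2$ is the corresponding integral over $\Gamma_\alpha(x)$. I split the $t$-integration at $t=|x-z|/(2\alpha)$: the small-$t$ regime uses $|y-z|\sim |x-z|$ combined with a change of variables and Lemma \ref{auxiliary}(a); the large-$t$ regime is handled by a polar-coordinate substitution centred at $z$ and Lemma \ref{auxiliary}(b), which is the source of the factor $\log^{1/2}(2+\alpha)$. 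Preserving the $w$-weighting that arises from the size condition, I then decompose $\mathbb{R}^n\setminus 3Q$ into dyadic annuli $2^{k+2}Q\setminus 2^{k+1}Q$ and apply Lemma \ref{auxiliary}(c) to sum the annular contributions to $\lesssim\log(2+\alpha)[w]_{\rm Dini}$. The outcome is a pointwise bound of the form
\[
\mathcal{N}_{S_{\alpha,\psi}}f(x)\lesssim A\alpha^n\log^{3/2}(2+\alpha)[w]_{\rm Dini}[\varphi]_{\rm Dini}\mathcal{M}f(x),
\]
from which the weak-type $(1,1)$ boundedness of $\mathcal{M}$ delivers the desired conclusion.

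The main obstacle is the sharp tracking of the $\log^{3/2}(2+\alpha)$ factor. A naive estimate that discards the residual $w$-weighting from $K_\alpha(x,z)$ would produce a logarithmically divergent sum over the dyadic annular decomposition, so one must retain the $w$-factor throughout the cone-integral estimation. The compound effect of the $\log^{1/2}(2+\alpha)$ coming from the wide-cone $L^2$-type integral via Lemma \ref{auxiliary}(b) and the $\log(2+\alpha)$ coming from the annular summation via Lemma \ref{auxiliary}(c) is precisely what produces the stated power $\log^{3/2}(2+\alpha)$; any cruder splitting either overshoots in the power of $\alpha$ or in the power of $\log(2+\alpha)$.
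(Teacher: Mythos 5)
Your reduction of $\mathcal{M}_{S_{\alpha,\psi}}$ to $\mathcal{N}_{S_{\alpha,\psi}}$ (together with $S_{\alpha,\psi}$ itself) via $\sqrt{|a^2-b^2|}\le a+\tfrac32 c$ is sound, and it is in fact a slightly cleaner algebraic step than the paper's $|a^2-b^2|\le|a-b|^2+2|a-b||a|$.

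The main step, however, has a genuine gap. You propose a pointwise domination $\mathcal{N}_{S_{\alpha,\psi}}f(x)\lesssim \alpha^n\log^{3/2}(2+\alpha)[w]_{\rm Dini}[\varphi]_{\rm Dini}\mathcal{M}f(x)$, derived from the \emph{size condition alone}. This cannot work: for $x\in Q$ and $z\in\mathbb{R}^n\setminus 3Q$, integrating the size bound over the cone and changing variables $u=t/|x-z|$ gives $K_\alpha(x,z)\lesssim \alpha^n[w]_{\rm Dini}|x-z|^{-n}$ with \emph{no residual decay in} $|x-z|$; the $w$-factor $w\bigl(t/(t+|y-z|)\bigr)$ is consumed by the $t$-integration and leaves only $[w]_{\rm Dini}$. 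Summing $\int_{2^{k+2}Q\setminus 2^{k+1}Q}K_\alpha(x,z)|f(z)|\,dz\lesssim \alpha^n[w]_{\rm Dini}\mathcal{M}f(x)$ over $k$ then diverges, and Lemma~\ref{auxiliary}(c) cannot rescue you because there is no $k$-dependent factor to make the series summable. This is also visible as an internal inconsistency in your sketch: the size condition has no $\varphi$, yet $[\varphi]_{\rm Dini}$ appears in your conclusion.

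What is actually needed, and what the paper does, is to use the \emph{smoothness condition}~\eqref{eq-smoothcondition1} to bound the \emph{oscillation} $\mathrm{I}:=|S_{\alpha,\psi}(f1_{\mathbb{R}^n\setminus 3Q})(x)-S_{\alpha,\psi}(f1_{\mathbb{R}^n\setminus 3Q})(x')|$ for $x,x'\in Q$. The difference of kernels gains the factor $\varphi\bigl(|x-x'|/(t+|x-z|)\bigr)\lesssim\varphi\bigl(c\,\ell(Q)/|x-z|\bigr)$, and it is precisely this $\varphi$-decay in $|x-z|$ that makes the dyadic annular sum converge (via Lemma~\ref{auxiliary}(c), producing the $[\varphi]_{\rm Dini}$ and a $\log(2+\alpha)$), while Lemma~\ref{auxiliary}(b) on the $t$-integral contributes $[w]_{\rm Dini}\log^{1/2}(2+\alpha)$. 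Note further that this yields a pointwise bound for the \emph{oscillation}, not for $\mathcal{N}_{S_{\alpha,\psi}}f$ itself; one then takes square roots, averages in $x'$, and invokes Kolmogorov's inequality together with the already-established weak-$(1,1)$ bound for $S_{\alpha,\psi}$ (Theorem~\ref{prop:201226-2}) to obtain $\mathcal{N}_{S_{\alpha,\psi}}f\lesssim \alpha^n\log^{3/2}(2+\alpha)[w]_{\rm Dini}[\varphi]_{\rm Dini}\mathcal{M}f+\mathcal{M}_{1/2}\circ S_{\alpha,\psi}f$. The weak-$(1,1)$ boundedness then comes from that of $\mathcal{M}$ and of $\mathcal{M}_{1/2}\circ S_{\alpha,\psi}$, not from a direct pointwise comparison with $\mathcal{M}f$.
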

\begin{proof}
It suffices to handle
${\mathcal N}_{S_{\alpha,\psi}}$.
In fact, arithmetic shows
\[
|a^2-b^2|=|a-b|\cdot|a+b|
\le|a-b|^2+2|a-b|\cdot|a|
\]
and hence
\[
{\mathcal M}_{S_{\alpha,\psi}}f(x)
\le
2\sqrt{{\mathcal N}_{S_{\alpha,\psi}}f(x)}
\sqrt{{\mathcal N}_{S_{\alpha,\psi}}f(x)+S_{\alpha,\psi} f (x)}
\quad (x \in {\mathbb R}^n).
\]
Thus, the estimate for
${\mathcal M}_{S_{\alpha,\psi}}$
follows
once we show that
${\mathcal N}_{S_{\alpha,\psi}}$ is weak-$(1,1)$ bounded
with the norm estimate
$$
\|{\mathcal N}_{S_{\alpha,\psi}}\|_{L^1 \to L^{1,\infty}}
\lesssim
[w]_{\rm{Dini}}[\varphi]_{\rm{Dini}}+\|S_{1,\psi}\|_{L^2 \to L^2}.
$$
Therefore,
We will seek a pointwise estimate
${\mathcal M}_{S_{\alpha,\psi}}$.
We fix a point $x \in {\mathbb R}^n$.

Fix a cube $Q$ so that $x \in Q$.
Let $x' \in Q$ be arbitrary.
We abbreviate
\[
{\rm I}=
|
S_{\alpha,\psi}(f \cdot 1_{{\mathbb R}^n \setminus 3Q})(x)
-
S_{\alpha,\psi}(f \cdot 1_{{\mathbb R}^n \setminus 3Q})(x')
|,
\]
so that 
\begin{equation}\label{eq:211114-1}
|
S_{\alpha,\psi}(f \cdot 1_{{\mathbb R}^n \setminus 3Q})(x)
|
\le
{\rm I}
+
|
S_{\alpha,\psi}(f \cdot 1_{{\mathbb R}^n \setminus 3Q})(x')
|.
\end{equation}
We claim that
\begin{equation}\label{eq:220330-3}
{\rm I}\lesssim [w]_{\rm{Dini}}{\mathcal M}f(x).
\end{equation}
Once (\ref{eq:220330-3}) is established,
we obtain
\begin{equation}\label{eq:220330-4}
|
S_{\alpha,\psi}(f \cdot 1_{{\mathbb R}^n \setminus 3Q})(x)
|
\lesssim [w]_{\rm{Dini}}{\mathcal M}f(x)+
|
S_{\alpha,\psi}(f \cdot 1_{{\mathbb R}^n \setminus 3Q})(x')
|
\end{equation}
for any $x' \in Q$
along with (\ref{eq:211114-1}). 

We write out $
S_{\alpha,\psi}(f \cdot 1_{{\mathbb R}^n \setminus 3Q})(x)$
and
$
S_{\alpha,\psi}(f \cdot 1_{{\mathbb R}^n \setminus 3Q})(x')$
in full:
\begin{align*}
S_{\alpha,\psi}(f \cdot 1_{{\mathbb R}^n \setminus 3Q})(x)
&=
\left(
\iint_{\Gamma_\alpha(0)}
|\psi_t(f \cdot 1_{{\mathbb R}^n \setminus 3Q})(x+y)|^2\frac{dydt}{t^{n+1}}
\right)^{\frac12},
\\
S_{\alpha,\psi}(f \cdot 1_{{\mathbb R}^n \setminus 3Q})(x')
&=
\left(
\iint_{\Gamma_\alpha(0)}
|\psi_t(f \cdot 1_{{\mathbb R}^n \setminus 3Q})(x'+y)|^2\frac{dydt}{t^{n+1}}
\right)^{\frac12}.
\end{align*}
We consider the weighted $L^2$-space
$L^2\left(\frac{dydt}{t^{n+1}},\Gamma_\alpha(0)\right)$.
By the triangle inequality
for this normed space, we have
\[
{\rm I}
\le
\left(
\iint_{\Gamma_\alpha(0)}
|\psi_t(f \cdot 1_{{\mathbb R}^n \setminus 3Q})(x+y)
-\psi_t(f \cdot 1_{{\mathbb R}^n \setminus 3Q})(x'+y)|^2\frac{dydt}{t^{n+1}}
\right)^{\frac12}.
\]
Keeping in mind that
\begin{equation}\label{eq:211114-2}
|x+y-z|+(2+\alpha)t \ge
|x-z|+(2+\alpha)t-|y|\ge
|x-z|+t
\end{equation}
for all $(y,t) \in \Gamma_\alpha(0)$ and $x,z \in {\mathbb R}^n$,
we estimate
\begin{align*}
\lefteqn{
|\psi_t(f \cdot 1_{{\mathbb R}^n \setminus 3Q})(x+y)
-\psi_t(f \cdot 1_{{\mathbb R}^n \setminus 3Q})(x'+y)|
}
\\
&\lesssim
\int_{{\mathbb R}^n \setminus 3Q}
\frac{1}{(t+|x+y-z|)^n}
\varphi\left(\frac{|x-x'|}{t+|x+y-z|}\right)
w\left(\frac{t}{t+|x+y-z|}\right)|f(z)|dz
\\
&\lesssim
\alpha^n
\int_{{\mathbb R}^n \setminus 3Q}
\frac{1}{(t+|x-z|)^n}
\varphi\left(\frac{(2+\alpha)|x-x'|}{t+|x-z|}\right)
w\left(\frac{(2+\alpha)t}{|x-z|}\right)|f(z)|dz\\
&\lesssim
\alpha^n
\int_{{\mathbb R}^n \setminus 3Q}
\frac{1}{(t+|x-z|)^n}
\varphi\left(\frac{\sqrt{n}(2+\alpha)\ell(Q)}{t+|x-z|}\right)
w\left(\frac{(2+\alpha)t}{|x-z|}\right)|f(z)|dz.
\end{align*}
If we integrate
this estimate against $(y,t) \in \Gamma_\alpha(0)$
and use Minkowski's inequality,
we obtain
\[
{\rm I}
\lesssim\alpha^n
\int_{{\mathbb R}^n \setminus 3Q}
\left[
\int_0^\infty
\left\{
\frac{1}{(t+|x-z|)^n}
\varphi\left(\frac{\sqrt{n}(2+\alpha)\ell(Q)}{t+|x-z|}\right)
w\left(\frac{(2+\alpha) t}{|x-z|}\right)\right\}^2\frac{dt}{t}
\right]^{\frac12}|f(z)|dz.
\]
By a change of variables
and Lemma \ref{auxiliary}
(b) 
we obtain
\begin{align*}
\lefteqn{
\int_0^\infty
\left\{
\frac{1}{(t+|x-z|)^n}
\varphi\left(\frac{\sqrt{n}(2+\alpha)\ell(Q)}{t+|x-z|}\right)
w\left(\frac{(2+\alpha)t}{|x-z|}\right)\right\}^2\frac{dt}{t}
}\\
&\le
\frac{1}{|x-z|^{2n}}
\varphi\left(\frac{\sqrt{n}(2+\alpha)\ell(Q)}{|x-z|}\right)^2
\int_0^\infty
\left\{
\frac{1}{(t+1)^{n}}
w\left((2+\alpha)t\right)
\right\}^2\frac{dt}{t}
\\
&{\sim} [w]_{\rm{Dini}}^2\log(2+\alpha)
\frac{1}{|x-z|^{2n}}
\varphi\left(\frac{\sqrt{n}(2+\alpha)\ell(Q)}{|x-z|}\right)^2.
\end{align*}
So, from Lemma \ref{auxiliary} (c),
we have
\begin{align*}
{\rm I}
&\lesssim \alpha^n\log^{\frac12}(2+\alpha)\,[w]_{\rm{Dini}}
\int_{{\mathbb R}^n \setminus 3Q}
\frac{1}{|x-z|^{n}}
\varphi\left(\frac{\sqrt{n}(2+\alpha)\ell(Q)}{|x-z|}\right)|f(z)|dz
\\ 
&\lesssim\alpha^n\log^{\frac12}(2+\alpha)[w]_{\rm{Dini}}
\sum_{k=1}^\infty
\frac{1}{(2^k \ell(Q))^{n}}
\varphi\left(\frac{\sqrt{n}(1+\alpha)}{2^k}\right)
\int_{2^{k+1}Q \setminus 2^k Q}|f(z)|dz
\\
&\lesssim
\alpha^n\log^{\frac32}(2+\alpha)[w]_{\rm{Dini}}[\varphi]_{\rm{Dini}}{\mathcal M}f(x),
\end{align*}
proving (\ref{eq:220330-3}).

As mentioned,
we thus deduce from (\ref{eq:211114-1}) and (\ref{eq:220330-3})
\[
S_{\alpha,\psi}(f \cdot 1_{{\mathbb R}^n \setminus 3Q})(x)
\lesssim
\alpha^n\log^{\frac32}(2+\alpha)[w]_{\rm{Dini}}[\varphi]_{\rm{Dini}}
{\mathcal M}f(x)+S_{\alpha,\psi}(f \cdot 1_{{\mathbb R}^n \setminus 3Q})(x').
\]
By the triangle inequality
\[
S_{\alpha,\psi}(f \cdot 1_{{\mathbb R}^n \setminus 3Q})(x)
\lesssim
\alpha^n\log^{\frac32}(2+\alpha)[w]_{\rm{Dini}}[\varphi]_{\rm{Dini}}
{\mathcal M}f(x)+S_{\alpha,\psi}f(x')+
S_{\alpha,\psi}(f \cdot 1_{3Q})(x').
\]
If we take the square root, then we obtain
\begin{align*}
\lefteqn{
\sqrt{S_{\alpha,\psi}(f \cdot 1_{{\mathbb R}^n \setminus 3Q})(x)}
}\\
&\lesssim
\sqrt{
\alpha^n\log^{\frac32}(2+\alpha)[w]_{\rm{Dini}}[\varphi]_{\rm{Dini}}{\mathcal M}f(x)}+
\sqrt{S_{\alpha,\psi}f(x')}+
\sqrt{S_{\alpha,\psi}(f \cdot 1_{3Q})(x')}.
\end{align*}
Next, we take the average over $Q$ against $x'$
and use the Hardy--Littlewood maximal operator
${\mathcal M}$ to have
\begin{align*}
\sqrt{S_{\alpha,\psi}(f \cdot 1_{{\mathbb R}^n \setminus 3Q})(x)}
&\lesssim
\sqrt{\alpha^n\log^{\frac32}(2+\alpha)[\varphi]_{\rm{Dini}}
[w]_{\rm{Dini}}{\mathcal M}f(x)}\\
&\quad+
{\mathcal M}\left[\sqrt{S_{\alpha,\psi}f}\right](x)+
\fint_Q\sqrt{S_{\alpha,\psi}(f \cdot 1_{3Q})(x')}dx'.
\end{align*}
By Kolmogorov's inequality
(see Lemma \ref{lem:Kol}), we have
\begin{align*}
\lefteqn{
\sqrt{S_{\alpha,\psi}(f \cdot 1_{{\mathbb R}^n \setminus 3Q})(x)}
\lesssim
\sqrt{\alpha^n\log^{\frac32}(2+\alpha) [w]_{\rm{Dini}}[\varphi]_{\rm{Dini}}
{\mathcal M}f(x)}}\\
&\quad+
{\mathcal M}\left[\sqrt{S_{\alpha,\psi}f}\right](x)+
\sqrt{\alpha^n [w]_{\rm{Dini}}[\varphi]_{\rm{Dini}}
\langle f \rangle_{1,3Q}}.
\end{align*}
If we use the Hardy--Littlewood maximal operator
${\mathcal M}$ once again, then we have
\begin{align*}
\sqrt{S_{\alpha,\psi}(f \cdot 1_{{\mathbb R}^n \setminus 3Q})(x)}
&
\lesssim
\sqrt{\alpha^n\log^{\frac32}(2+\alpha) [w]_{\rm{Dini}}[\varphi]_{\rm{Dini}}
{\mathcal M}f(x)}+
{\mathcal M}\left[\sqrt{S_{\alpha,\psi}f}\right](x).
\end{align*}
If we square the above inequality,
we obtain
\[
S_{\alpha,\psi}(f \cdot 1_{{\mathbb R}^n \setminus 3Q})(x)
\lesssim \alpha^n\log^{\frac32}(2+\alpha) 
[w]_{\rm{Dini}}[\varphi]_{\rm{Dini}}
{\mathcal M}f(x)+
{\mathcal M}_{\frac12} \circ S_{\alpha,\psi}f(x)
\]
Since $Q$ is also arbitrary,
it follows that
\[
{\mathcal N}_{S_{\alpha,\psi}}f(x)
\lesssim \alpha^n\log^{\frac32}(2+\alpha)
[w]_{\rm{Dini}}[\varphi]_{\rm{Dini}}
{\mathcal M}f(x)+
{\mathcal M}_{\frac12} \circ S_{\alpha,\psi}f(x).
\]
Since the operators
${\mathcal M}$ and
${\mathcal M}_{\frac12} \circ S_{\alpha,\psi}$
are weak-$(1,1)$ bounded with the norms bounded by constant times $1$ and 
$\alpha^n\log^{\frac32}(2+\alpha) ([w]_{\rm{Dini}}[\varphi]_{\rm{Dini}}+\|S_{1,\psi}\|_{L^2 \to L^2})$,
respectively,
it follows that
${\mathcal N}_{S_{\alpha,\psi}}$ enjoys the same
boundedness property as
${\mathcal M}_{\frac12} \circ S_{\alpha,\psi}$.
\end{proof}

\subsection{Proof of Lemma \ref{domiTheorem}}\label{sparse-lastpart} 
Now, we present the proof of Lemma \ref{domiTheorem}.

We define
\[
{\mathcal M}_{S_{\alpha,\psi},Q_0}f
:=
{\mathcal M}_{S_{\alpha,\psi}}(f \cdot 1_{{3Q_0}}), \quad
\widetilde{\mathcal M}_{S_{\alpha,\psi,Q_0}}f
:=
\max\{S_{\alpha,\psi} f ,{\mathcal M}_{S_{\alpha,\psi,Q_0}}f\}.
\]

By Theorem
\ref{prop:201226-2}
and
Lemma
\ref{weakestiamte},
 we have
\begin{equation}\label{eq:220425-111}
\|\widetilde{ \Mm}_{{S}_{\alpha, \psi,Q_0}}\|_{L^1 \to L^{1,\vc}} \le
\alpha^n\log^{\frac32}(2+\alpha)
(D_1+D_2)[w]_{\rm{Dini}}[\varphi]_{\rm{Dini}}.
\end{equation}
We now define
$$
E:=\Big\{ x\in Q_0: \widetilde{ \Mm}_{{S}_{\alpha, \psi,Q_0}}\Big(f\cdot 1_{3Q_0} \Big)(x) 
> \gamma^{\frac12}([w]_{\rm{Dini}}[\varphi]_{\rm{Dini}}+\|S_{1,\psi}\|_{L^2 \to L^2})
\langle f \rangle_{1,3Q_0}\Big\},
$$
where $\gamma$ is a constant which will be specified shortly. 

From this point, 
the rest of the proof is almost the same as \cite[p. 21]{BBD}.
Since the operator $\widetilde{ \Mm}_{{S}_{\alpha, \psi,Q_0}}$ is of weak type $(1,1)$
thanks to (\ref{eq:220425-111}), we have
$$
|E| \le \alpha^n\log^{\frac32}(2+\alpha)\gamma^{-\frac12}(D_1+D_2)|Q_0|.
$$
Thus,
if we choose $\gamma=2^{2n+2}(D_1+D_2)^2\alpha^{2n}\log^3(2+\alpha)$,
then
$|E| \le 2^{-n-1}|Q_0|$.
By a standard argument using the maximality
(see \cite{Duoandikoetxea01}, for example), 
we can find a family of dyadic cubes $\{ P_j\}_j \subset \mathscr{D}(Q_0)$ 
with the following properties:
\begin{enumerate}
 \item $\{ P_j\}_j$ is pairwise disjoint and $E\subset\bigcup\limits_j P_j$ almost everywhere,
that is,
\[
\left|E \setminus \bigcup_j P_j\right|=0.
\]
 \item
Let $\hat{P}_j $ be the parent of $P_j$.
Then each $P_j$
 satisfies
\[
\frac{|\hat{P}_j \cap E|}{|\hat{P}_j|}
\le
\frac{1}{2^{n+1}}<
\frac{|P_j \cap E|}{|P_j|}.
\]
 Hence in particular $|\hat{P}_j|\le 2^n |E|\le \frac12|Q_0|$ and
 $\hat{P}_j \subsetneq Q_0$.
 \end{enumerate} 
 We now decompose
 \begin{equation}\label{37-BBD}
 S_{\alpha, \psi}^2f\cdot 1_{Q_0}=S_{\alpha, \psi}^2f\cdot 1_{E}+S_{\alpha, \psi}^2f\cdot 1_{Q_0\setminus E}.
 \end{equation}
 Since $S_{\alpha, \psi}f\leq \widetilde{ \Mm}_{{S}_{\alpha, \psi,Q_0}}f$,
we have
 \begin{equation}\label{38-BBD}
 S_{\alpha, \psi}^2f \cdot 1_{Q_0\setminus E} 
 \leq \gamma ([\varphi]_{\rm{Dini}} [w]_{\rm{Dini}}+\|S_{1,\psi}\|_{L^2 \to L^2})^2
 \langle f \rangle_{1,3Q_0}{}^2 \cdot 1_{Q_0\setminus E}
 \end{equation}
in view of the definition of $E$.
 For the term $S_{\alpha, \psi}^2f \cdot 1_{E}$,
 using property (1), we have
\begin{equation}\label{39-BBD}
\begin{split}
 S_{\alpha, \psi}^2f \cdot 1_{ E} &\leq \sum_{j} \Big[ \Big|S_{\alpha, \psi}^2\Big(f\cdot 1_{3\hat{P}_j}\Big) \Big| \cdot 1_{ P_j}
+ \Big|S_{\alpha, \psi}^2f -S_{\alpha, \psi}^2\Big(f\cdot 1_{3\hat{P}_j}\Big) \Big| \cdot 1_{ P_j} \Big].
 \end{split}
 \end{equation}
We recall that $f$ is supported on $3Q_0$.
Thus,
\[
S_{\alpha, \psi}^2f -S_{\alpha, \psi}^2\Big(f\cdot 1_{3\hat{P}_j}\Big)
=S_{\alpha, \psi}^2\Big(f \cdot 1_{3Q_0} \Big) -S_{\alpha, \psi}^2
 \Big(f\cdot 1_{3\hat{P}_j}\Big).
\]

We write the quantity
${\mathcal M}_{S_{\alpha,\psi,Q_0}}f(x)$ out in full:
\[
{\mathcal M}_{S_{\alpha,\psi,Q_0}}f(x)
=
\sup_{Q}1_Q(x)
\sqrt{|S_{\alpha,\psi}(f \cdot 1_{3Q_0})(x)^2-S_{\alpha,\psi}(f \cdot 1_{3Q})(x)^2|}
\quad (x \in {\mathbb R}^n)
\]

 Since
$|\hat{P}_j \cap E| \le 2^{-n-1}|\hat{P}_j|$,
we have $\hat{P}_j \cap E^{\rm c} \neq \emptyset$.
Remark also that $3\hat{P}_j\subset 3Q_0$.
Thus, 
 we have
 \begin{align*}
 \Big|S_{\alpha, \psi}^2\Big(f \cdot 1_{3Q_0} \Big) -S_{\alpha, \psi}^2
 \Big(f\cdot 1_{3\hat{P}_j}\Big) \Big| 
 &\leq \inf_{w \in \hat{P}_j} \widetilde{ \Mm}^2_{{S}_{\alpha, \psi,Q_0}}
 \Big(f\cdot 1_{3Q_0} \Big)(w) 
 \\
 &\leq \gamma ([\varphi]_{\rm{Dini}}[w]_{\rm{Dini}}+\|S_{1,\psi}\|_{L^2 \to L^2})^2
 \langle f \rangle_{1,3Q_0}{}^2 
 \end{align*}
 thanks to property (2).
 This along with \eqref{39-BBD} and property (1), implies that
\begin{equation*}
\begin{split}
 S_{\alpha, \psi}^2f \cdot 1_{ E} &\leq \sum_{j} S^2_{\alpha, \psi}
 \Big(f\cdot 1_{3\hat{P}_j}\Big) \cdot 1_{ \hat{P}_j}
+ \gamma ([\varphi]_{\rm{Dini}}[w]_{\rm{Dini}}+\|S_{1,\psi}\|_{L^2 \to L^2})^2 \sum_{j}
\langle f \rangle_{1,3Q_0}{}^2 \cdot 1_{P_j}
\\
 &=\sum_{j} 
 S_{\alpha, \psi}^2\Big(f\cdot 1_{3\hat{P}_j}\Big) \cdot 1_{ \hat{P}_j}
+ \gamma ([\varphi]_{\rm{Dini}}[w]_{\rm{Dini}}+\|S_{1,\psi}\|_{L^2 \to L^2})^2
\langle f \rangle_{1,3Q_0}{}^2 \cdot 1_{E}.
 \end{split}
 \end{equation*}
 Combining this, \eqref{37-BBD} and \eqref{38-BBD}, we have
 $$
 S_{\alpha, \psi}^2f\cdot 1_{Q_0} 
 \leq \sum_{j} 
 S_{\alpha, \psi}^2\Big(f\cdot 1_{3\hat{P}_j}\Big) 
 \cdot 1_{ \hat{P}_j}
+ \gamma ([\varphi]_{\rm{Dini}}[w]_{\rm{Dini}}+\|S_{1,\psi}\|_{L^2 \to L^2})^2
\langle f \rangle_{1,3Q_0}{}^2 \cdot 1_{Q_0}.
 $$
 Iterating this estimate, we immediately get \eqref{Sparse-Dini} with 
 ${\mathcal S}:=\{P_j^k\}_{k \in \Z^{+},j}$, where $P_j^0= Q_0$, 
 $\{ P_j^1 \}_j = \{ \hat{P}_j \}_j$ and $\{ P_j^k \}_j$ are the cubes obtained 
 at the $k$-th stage of the iterative process. 
 The sparseness of ${\mathcal S}$ is straightforward from the choice of $\gamma$
 and property (3).
This completes our proof.

\section{The Multilinear Littlewood--Paley operators}\label{s7}
We define multi-linear Littlewood--Paley operators
in Section \ref{sect3.1}.
We formulate the main result
for multi-linear Littlewood--Paley operators in Section \ref{sect3.2}.
Section \ref{Application--weighted bounds}
presents corollaries of the theorem
in Section \ref{sect3.2}.
Section \ref{sect3.3} collects an auxiliary estimate.
Sections \ref{sect3.4}--\ref{s:Bad} are oriented to the proof of
one of the main theorems given in Section \ref{sect3.2},
where a weak type estimate is proved.
Section \ref{sect3.4} is a setup,
Section \ref{s:Good} deals with the good part of the functions.
Section \ref{s:Bad} concludes with the estimate of bad parts.
We assume $m=2$ in Sections \ref{sect3.4}--\ref{s:Bad}
for the sake of simplicity.
We give its sparse domination in Section \ref{s33}. 
We remark that
 the Dini-condition of $\varphi^{\frac1m}$ and $w^{\frac1m}$
is needed, but in the sparse bound
 the Dini-condition of $\varphi$ and $w$
suffices.
We need a stronger type of Dini condition,
that is, the one of $\varphi^{\frac1m}$ and $w^{\frac1m}$
when we consider the weak endpoint estimate
via the Calder\'{o}n--Zymgund decomposition.
See Lemma \ref{lem:220330-103},
where we consider the vector-valued functions
made up of more than $1$ bad parts.
\subsection{The definition of multilinear square functions} 
\label{sect3.1}

Let us recall the definition of multilinear square functions considered in this paper. 
Let $\psi(x,\vec{y}):=\psi(x, y_1, \ldots, y_m)$ be a locally integrable function defined away 
from the diagonal $x =y_1=\cdots =y_m$ in $(\mathbb{R}^n)^{m+1}$. 
Let $\varphi$ and $w$ be moduli of continuity. 

We assume that there is
a positive constant $A$ so that the following conditions hold:

\begin{itemize}
\item Size condition: 
\begin{equation}\label{eq:220330-205}
|\psi(x, \vec{y})| \leq A\left( 1+\sum_{i=1}^m |x-y_i|\right)^{-nm} w \left( \frac{1}{ 1+\sum_{i=1}^m |x-y_i|} \right).
\end{equation}
\item Smoothness condition: 
\begin{align*}
\lefteqn{
|\psi(x, \vec{y})-\psi(x', \vec{y})|
}\\
&\leq A\left( 1+\sum_{i=1}^m |x-y_i|\right)^{-nm} 
w \left( \frac{1}{ 1+\sum_{i=1}^m |x-y_i|} \right) \varphi \left( \frac{|x-x'|}{ 1+\sum_{i=1}^m |x-y_i|} \right),
\end{align*}
whenever $|x-x'|< \frac{1}{2}\max_j|x-y_j|$, and
\begin{align*}
\lefteqn{
\left|\psi(x, \vec{y})-\psi(x,y_1,\ldots,y'_i,\ldots,y_m)\right|
}\\
& \leq A\left( 1+\sum_{i=1}^m |x-y_i|\right)^{-nm} 
w \left( \frac{1}{ 1+\sum_{i=1}^m |x-y_i|} \right) \varphi \left( \frac{|y_i-y'_i|}{ 1+\sum_{i=1}^m |x-y_i|} \right),
\end{align*}
whenever $|y_i-y'_i|< \frac{1}{2}\max_j|x-y_j|$ for $i=1,2,\ldots,m$.
\end{itemize}
Let
\begin{equation}\label{eq:L}
L:=t+ |x+z-y_1|+|x+z-y_2|.
\end{equation}
For $t>0$, define the operator $\psi_t$ by
\[
\psi_t\vec{f}(x) :=\frac{1}{t^{nm}}\int_{(\Rn)^m}\psi
\Big(\frac{x}{t},\frac{y_1}{t},\ldots, \frac{y_m}{t}\Big)\prod_{j=1}^m f_j(y_j)dy_j,
\]
for all $\vec{f}=(f_1,\ldots,f_m)\in \S(\Rn) \times \cdots \times \S(\Rn)$.

Given $ \alpha>0$ and $\lambda>2m$, the multilinear square functions 
$S_{\alpha,\psi}$ and $g^*_{\lambda}$ are defined by
\begin{align}\label{eq:220330-101}
S_{\alpha,\psi}\vec{f}(x) 
:=\bigg(\iint_{\Gamma_\alpha(x)}|\psi_t\vec{f}(y)|^2\frac{dydt}{t^{n+1}}
\bigg)^{\frac12}
\end{align} and
\begin{align}\label{eq:220330-102}
g^*_{\lambda}\vec{f}(x) :=\bigg(\iint_{{\mathbb R}^{n+1}_+}\Big(\frac{t}{t+|x-y|}\Big)^{n\lambda}|\psi_t\vec{f}(y)|^2\frac{dydt}{t^{n+1}}\bigg)^{\frac12}.
\end{align}
Hereafter, we assume that there exist some exponents $1\le p_1,\ldots, p_m\le \infty$ and 
some $0<p<\infty$ with 
\begin{equation}\label{eq:220406-111}
\frac 1p=\frac{1}{p_1}+\cdots+\frac{1}{p_m}
\end{equation} 
such 
that $S_{1,\psi}$ maps continuously 
$L^{p_1}(\Rn)\times\cdots\times L^{p_m}(\Rn)$ to $L^p(\Rn)$. 
Under this condition, 
as a preparatory step,
we will establish that $S_{\alpha, \psi}$ and 
$g^*_{\lambda}$ maps continuously 
$L^1(\Rn) \times \cdots \times L^1(\Rn) \rightarrow L^{1/m,\infty}(\Rn)$ 
provided $\lambda > 2m$ and $\alpha>0$.

 \subsection{Main theorem}
\label{sect3.2}
 Based on the linear case, we consider
the multilinear case.
 We handle weak-$(1,1)$ estimates
of $S_{\alpha,\psi}$ and $g_{\lambda,\psi}^*$
given by
(\ref{eq:220330-101})
and
(\ref{eq:220330-102}),
respectively.

Also, by \cite[Lemma 3.1]{BuiHormozi} we have 
\begin{equation}\label{eq:220330-103}
\|S_{\alpha,\psi}\vec{f}\|_{L^{1/m,\infty}}
 \lesssim_{m}\alpha^{nm}\|S_{1,\psi}\vec{f}\|_{L^{1/m,\infty}}
 \end{equation}
 for all $\vec{f} \in L^1({\mathbb R}^n)^m$.
 \begin{theorem}[Weak end-point estimates]\label{Thm-multi-w}
Let $\lambda>2m.$
 Let $\sqrt[m]{\varphi},\,\sqrt[m]{w}$ be moduli of continuity satisfying
 the Dini condition. 
 Then for any $\rho>0$
and
$\alpha \ge 1$, 
 \[
\|S_{\alpha,\psi}\|_{L^{1} \times L^{1} \times \cdots \times L^{1} \to L^{1/m,\infty}}
\lesssim
\alpha^{nm} (A[\sqrt[m]{w}]^m_{\rm{Dini}}(1+ [\sqrt[m]{\varphi}]^m_{\rm{Dini}})
+\|S_{1,\psi}\|_{L^{p_1} \times L^{p_2} \times \cdots \times L^{p_m} \to L^p})
\]
and
\[
\|g^*_{\lambda, \psi}\|_{L^{1} \times L^{1} \times \cdots \times L^{1} \to L^{1/m,\infty}}
\lesssim A
[\sqrt[m]{w}]^m_{\rm{Dini}}(1+ [\sqrt[m]{\varphi}]^m_{\rm{Dini}})
+\|S_{1,\psi}\|_{L^{p_1} \times L^{p_2} \times \cdots \times L^{p_m} \to L^p}.
\]
 \end{theorem}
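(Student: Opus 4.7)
The plan is to adapt the proof of Theorem~\ref{prop:201226-2} to the $m$-linear setting. Since (\ref{eq:220330-103}) reduces the $S_{\alpha,\psi}$ bound to the case $\alpha=1$, and the analog of (\ref{grelationS-linear}),
\[
g^{*}_{\lambda,\psi}\vec f\lesssim\sum_{k=0}^{\infty}2^{-k\lambda n/2}S_{2^{k+1},\psi}\vec f,
\]
whose geometric series converges because $\lambda>2m$ and (\ref{eq:220330-103}) contributes $\alpha^{nm}$-growth, deduces the $g^{*}_{\lambda,\psi}$ bound from that for $S_{\alpha,\psi}$, it suffices to prove the weak $L^{1/m}$ bound for $S_{1,\psi}$.

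Fix $\lambda>0$ and, by homogeneity, assume $\|f_i\|_{L^1}=1$ for every $i$. I would perform a Calder\'on--Zygmund decomposition of each $f_i$ at height $\lambda^{1/m}$, writing $f_i=g_i+b_i=g_i+\sum_j b_{i,j}$ with $\|g_i\|_{L^\infty}\lesssim\lambda^{1/m}$, $\int b_{i,j}\,dx=0$, $\supp b_{i,j}\subset Q_{i,j}$, and $\sum_{i,j}|Q_{i,j}|\lesssim\lambda^{-1/m}$. Set $\Omega:=\bigcup_{i,j}64nQ_{i,j}$, so $|\Omega|\lesssim\lambda^{-1/m}$ already accounts for $x\in\Omega$ in the desired $L^{1/m,\infty}$ bound. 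By multilinearity, $S_{1,\psi}\vec f$ splits into $2^m$ summands $S_{1,\psi}(h_1,\ldots,h_m)$ with each $h_i\in\{g_i,b_i\}$. For the all-good summand, interpolation gives $\|g_i\|_{L^{p_i}}\lesssim\lambda^{(1-1/p_i)/m}$, and then (\ref{eq:220406-111}), the a priori $L^{p_1}\times\cdots\times L^{p_m}\to L^p$ boundedness, and Chebyshev combine to yield the desired weak bound with a constant controlled by a power of $\|S_{1,\psi}\|_{L^{p_1}\times\cdots\times L^{p_m}\to L^p}$.

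For a summand with a nonempty set $E\subset\{1,\ldots,m\}$ of bad slots, Chebyshev with exponent $1/m$ on $\mathbb{R}^n\setminus\Omega$ reduces the matter to
\[
\sum_{\vec j}\int_{\mathbb{R}^n\setminus\Omega}S_{1,\psi}(h_1^{(j_1)},\ldots,h_m^{(j_m)})(x)^{1/m}\,dx\lesssim[\sqrt[m]{w}]^m_{\rm{Dini}}\bigl(1+[\sqrt[m]{\varphi}]^m_{\rm{Dini}}\bigr),
\]
where $h_i^{(j_i)}=b_{i,j_i}$ for $i\in E$ and $h_i^{(j_i)}=g_i$ otherwise. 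The key ingredient is a multilinear analog of Lemma~\ref{lem:220411-1}: using the moment condition on the bad slot $i_0\in E$ whose cube $Q_*:=Q_{i_0,j_{i_0}}$ has smallest side-length, together with the size and smoothness hypotheses (\ref{eq:220330-205}) and the geometric inequality (\ref{eq:220330-201}) applied slot-by-slot, one obtains a pointwise estimate schematically of the form
\[
S_{1,\psi}(h_1^{(j_1)},\ldots,h_m^{(j_m)})(x)\lesssim\frac{\varphi\bigl(\tfrac{\sqrt n\,\ell(Q_*)}{\ell(Q_*)+|x-c(Q_*)|}\bigr)}{(\ell(Q_*)+|x-c(Q_*)|)^{nm}}\prod_{i=1}^m w\bigl(\tfrac{\sqrt n\,\ell(Q_i)}{\ell(Q_i)+|x-c(Q_i)|}\bigr)\prod_i\|h_i^{(j_i)}\|_{L^1}+\text{tail}.
\]
Taking the $1/m$-th power turns the product of $m$ factors of $w$ into a product of $m$ factors of $\sqrt[m]{w}$; integrating in $x$ via Lemma~\ref{lem:210801-1} and Corollary~\ref{cor:220330-1} applied to $\sqrt[m]\varphi$ and $\sqrt[m]w$ then produces the constants $[\sqrt[m]{w}]^m_{\rm{Dini}}$ and $(1+[\sqrt[m]{\varphi}]^m_{\rm{Dini}})$. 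Summing over $\vec j$ uses $\sum_j\|b_{i,j}\|_{L^1}\lesssim\|f_i\|_{L^1}=1$.

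The main obstacle is the multilinear kernel bound together with the distribution of the $m$ factors of $w$ across the $m$ slots: one must exploit cancellation on a single distinguished bad slot, control every other slot purely by the size condition, and then redistribute the resulting product via the elementary inequality $\prod_i\xi_i^{1/m}\le\frac{1}{m}\sum_i\xi_i$ so that each individual factor integrates separately against a $\sqrt[m]w$- or $\sqrt[m]\varphi$-Dini integral. This vector-valued repackaging is precisely what Lemma~\ref{lem:220330-103} achieves, and it explains why the stronger $\sqrt[m]{\cdot}$-Dini hypothesis --- rather than just $[w]_{\rm{Dini}}$ and $[\varphi]_{\rm{Dini}}$ --- must be invoked at the weak endpoint, even though the subsequent sparse domination needs only the plain Dini condition.
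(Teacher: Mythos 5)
Your macro-level plan matches the paper: reduce to $\alpha=1$ via the multilinear aperture inequality, deduce the $g^*_{\lambda,\psi}$ bound from the $S_{\alpha,\psi}$ bound by the geometric series using $\lambda>2m$, perform a Calder\'on--Zygmund decomposition of each $f_i$ at height $\rho^{1/m}$, discard $\Omega$, and handle the all-good summand by interpolation and the a priori $L^{p_1}\times\cdots\times L^{p_m}\to L^p$ bound. The mixed summands with exactly one bad slot also go through essentially as you suggest, by a straightforward multilinear version of Lemma~\ref{lem:220411-1}. However, the treatment of summands with two or more bad slots has a genuine gap, in two places.

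First, your schematic pointwise bound contains a product of $m$ factors of $w$, one per slot, but the size condition~\eqref{eq:220330-205} supplies only a single factor $w\bigl(1/(1+\sum_i|x-y_i|)\bigr)$. The paper recovers a product structure $\prod_{k}(A^{(k)}+B^{(k)})$ only by proving a \emph{separate} single-slot bound for each $k$ and then taking a geometric mean of these $m$ bounds (see the remark after~\eqref{eq:220402-1112}); the $m$ copies of $w$ do not come out of the kernel directly. Second, and more seriously, your plan is to bound, for each fixed tuple $\vec j$, the integral $\int_{\Rn\setminus\Omega} S_{1,\psi}(b_{1,j_1},\ldots)^{1/m}\,dx$ and then sum over $\vec j$. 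Taking the $1/m$-th power produces $\prod_i\|b_{i,j_i}\|_{L^1}^{1/m}$, and $\sum_{\vec j}\prod_i\|b_{i,j_i}\|_{L^1}^{1/m}=\prod_i\sum_{j_i}\|b_{i,j_i}\|_{L^1}^{1/m}$ is \emph{not} controlled by $\prod_i\|f_i\|_{L^1}=1$; the sum of $1/m$-th powers can be arbitrarily large. The AM--GM redistribution you invoke at the end does not rescue this either, because after summing in $\vec j$ each $\xi_i$-term gets multiplied by the (infinite) cardinality of the remaining index set.

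What the paper actually does for the all-bad term is qualitatively different and is the crux of why the $\sqrt[m]{w}$-Dini hypothesis appears. One first applies the $L^\infty$--$L^1$ interpolation $\|h\|_{L^2(\Gamma)}\le\|h\|_{L^\infty(\Gamma)}^{1/2}\|h\|_{L^1(\Gamma)}^{1/2}$ to the cone integrand, so that
\[
S_{1,\psi}(b_1,b_2)(x)\lesssim\sqrt{[w]_{\rm Dini}\,{\mathcal M}b_1(x)\,{\mathcal M}b_2(x)}\;\sqrt{\widetilde S_{1,\psi}(b_1,b_2)(x)},
\]
where $\widetilde S_{1,\psi}$ is the $L^1$-cone quantity. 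The maximal-function factor is handled by Lemma~\ref{lem:220330-111}. For $\widetilde S_{1,\psi}(b_1,b_2)$, Lemma~\ref{lem:220406-1111} (with the sum over $j_1,j_2$ taken \emph{before} any power is applied) yields the pointwise product bound $\widetilde S_{1,\psi}(b_1,b_2)(x)\lesssim C^{(1)}(x)C^{(2)}(x)$, where each $C^{(k)}(x)=\sum_j\|b_{k,j}\|_{L^1}\bigl(A^{(k)}_j(x)+B^{(k)}_j(x)\bigr)$ already carries $\|b_{k,j}\|_{L^1}$ to the \emph{first} power. Only then does one apply Cauchy--Schwarz in the spatial integral, $\int\sqrt{C^{(1)}C^{(2)}}\le\bigl(\int C^{(1)}\bigr)^{1/2}\bigl(\int C^{(2)}\bigr)^{1/2}$, and each $\int C^{(k)}\lesssim\bigl([\sqrt w]_{\rm Dini}+[\sqrt w]_{\rm Dini}^{1/2}[\sqrt\varphi]_{\rm Dini}\bigr)\sum_j\|b_{k,j}\|_{L^1}$ converges. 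It is the $\sqrt{\cdot}$ (in general $\sqrt[m]{\cdot}$) sitting under the square roots in $A^{(k)}_j$ and $B^{(k)}_j$, not a power taken after the $\vec j$-sum, that forces the $\sqrt[m]{w}$-Dini hypothesis. You should replace your summation scheme with this interpolation--product--Cauchy--Schwarz argument; as written your proposal does not close.
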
 

Let $x \in {\mathbb R}^n$
We note that 
\begin{align}\label{grelationS}
 g_{\lambda, \psi}^{*}\vec{f}(x)
&\le
\bigg(\iint_{\Gamma(x,t)}\Big(\frac{t}{t+|x-y|}\Big)^{n\lambda}|\psi_t\vec{f}(y)|^2\frac{dydt}{t^{n+1}}\bigg)^{\frac12}
\nonumber\\
&\quad+
\bigg(\iint_{\Gamma(x,2^kt) \setminus \Gamma(x,2^{k-1}t)}\Big(\frac{t}{t+|x-y|}\Big)^{n\lambda}|\psi_t\vec{f}(y)|^2\frac{dydt}{t^{n+1}}\bigg)^{\frac12}
\nonumber\\
&\lesssim
\sum_{k=0}^\infty 2^{-k n\lambda}
\bigg(\iint_{\Gamma(x,2^k t)}|\psi_t\vec{f}(y)|^2\frac{dydt}{t^{n+1}}\bigg)^{\frac12}
\nonumber\\
& \lesssim \sum_{k=0}^{\infty} 2^{-\frac{k\lambda n} {2}} S_{ 2^{k+1}, \psi}\vec{f}(x)
\end{align} 
 for all $\vec{f}=(f_1, \ldots, f_m) \in L^1({\mathbb R}^n)^m$.
 Hence,
 in the light of 
 (\ref{eq:220330-103}), 
 it is enough to establish
\begin{equation}\label{eq:220426-2}
\rho^{\frac{1}{m}}
\left|\left\{x \in \Rn: S_{1,\psi}\vec{f}(x)>\rho\right\}\right| 
\lesssim A
[\sqrt[m]{w}]^m_{\rm{Dini}}(1+ [\sqrt[m]{\varphi}]^m_{\rm{Dini}})
+\|S_{1,\psi}\|_{L^{p_1} \times L^{p_2} \times \cdots \times L^{p_m} \to L^p}
\end{equation}
 for
 $\rho>0$ and 
 $\vec{f}=\{f_j\}_{j=1}^m \in L^1({\mathbb R}^n)$ with $\|f_j\|_{L^1}=1$. 
\subsection{Application--weighted bounds}
\label{Application--weighted bounds}

Before we come to the proofs of these theorems,
we present applications.
For a general account of multiple weights and related results, 
we refer the reader
to \cite{LOPTT}. We briefly introduce some definitions that we will need.

\medskip
 
 \par Consider $m$ weights $w_1,\dotsc,w_m$ and denote 
$\overrightarrow{w}=(w_1,\dotsc,w_m)$. Also let $1<p_1,\dotsc,p_m<\infty$ and 
$p$ be numbers such that $\frac{1}{p}=\frac{1}{p_1}+\dotsb+\frac{1}{p_m}$ and 
denote $\overrightarrow p = (p_1,\dotsc, p_m)$. Set

 $$\nu_{\vec w}:=\prod_{i=1}^m w_i^{\frac{p}{p_i}}.$$

We say that $\vec{w}$ satisfies the $A_{\vec{p}}$-condition if
\begin{equation}\label{eq:multiap_LOPTT}
 [\vec{w}]_{A_{\vec{p}}}:=\sup_{Q}\Big(\fint_Q\nu_{\vec w}\Big)
 \prod_{j=1}^m\Big(\fint_Q w_j^{1-p'_j}\Big)^{\frac{p}{p_j'}}<\infty.
\end{equation}
The class
$A_{\vec{p}}$
collects all $\vec{w}$
for which
$[\vec{w}]_{A_{\vec{p}}}$ is finite.
When $p_j=1$, $\Big(\fint_Q w_j^{1-p'_j}\Big)^{\frac{p}{p_j'}}$ is 
understood as $\displaystyle(\inf_Qw_j)^{-p}$.
 
 Assume that there exist some exponents
$1\leq p_1,\dotsc,p_m \leq \infty$
and some $0<p<\infty$
with $\frac{1}{p}=\frac{1}{p_1}+\dotsb+\frac{1}{p_m}$,
such that $S_{1, \psi}$ maps continuously
$L^{p_1}(\Rn)\times\dotsb\times L^{p_m}(\Rn)\to L^{p}(\Rn)$. 
 Similar to the techniques given in \cite{BuiHormozi, CY}, one can prove the 
 following theorems:
\begin{theorem}\label{thm:S-sharp}
Let $\sqrt[m]{\varphi},\,\sqrt[m]{w}$ be functions satisfying 
the Dini condition.
Let $\alpha\geq 1$,
 $\vec{w} \in A_{\vec{p}}$ and $\frac1p=\frac{1}{p_1}+\cdots +\frac{1}{p_m}$ 
with $1<p_1,\ldots,p_m<\infty$. 
Write
\begin{align*}
K
&:=\alpha^{nm}
\log^{\frac12+m}(2+\alpha)
(A[\sqrt[m]{w}]^m_{\rm{Dini}}(1+ [\sqrt[m]{\varphi}]^m_{\rm{Dini}})
+\|S_{1,\psi}\|_{L^{p_1} \times L^{p_2} \times \cdots \times L^{p_m} \to L^p})\\
&\quad \times[\vec{w}]_{A_{\vec{p}}}^{\max(\frac{1}{2},\frac{p_1'}{p},\ldots,
\frac{p_m'}{p})}.
\end{align*}
Then
 \begin{equation}\label{eq:BH}
\|S_{\alpha,\psi}\vec{f}\|_{L^{p}(\nu_{\vec{w}})} 
\lesssim K\prod_{i=1}^m\|f_i\|_{L^{p_i}(w_i)}
\end{equation}
for all $\vec{f}=\{f_i\}_{i=1}^m$
satisfying $f_i \in L^{p_i}(w_i)$ for each $i$,
where the implicit constant is independent of $\alpha$ and $\vec{w}$. 
\end{theorem}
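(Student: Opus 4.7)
The plan is to deduce Theorem \ref{thm:S-sharp} by combining the multilinear sparse domination established in Section \ref{s33} (which is the natural $m$-linear extension of Theorem \ref{domiCor1}) with the sharp weighted theory for multilinear sparse operators of square-function type. The strategy is a clean two-step reduction: sparse domination, then weighted estimate for the resulting sparse operator.

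First, I would invoke the multilinear sparse bound. For compactly supported $f_i \in L^{p_i}(w_i)$, Section \ref{s33} produces sparse families $\mathcal{S}^{(\vec{k})} \subset \mathcal{D}^{(\vec{k})}$, $\vec{k} \in \{1,2,3\}^n$, such that
\begin{equation*}
S_{\alpha,\psi}\vec{f}(x) \lesssim \alpha^{nm}\log^{\frac{1}{2}+m}(2+\alpha)\, C_0 \sum_{\vec{k}\in\{1,2,3\}^n} \mathcal{T}_{\mathcal{S}^{(\vec{k})}}(\vec{f})(x),
\end{equation*}
where $C_0 := A[\sqrt[m]{w}]^m_{\mathrm{Dini}}(1+[\sqrt[m]{\varphi}]^m_{\mathrm{Dini}})+\|S_{1,\psi}\|_{L^{p_1}\times\cdots\times L^{p_m}\to L^p}$ is the structural constant appearing in Theorem \ref{Thm-multi-w}, and
\begin{equation*}
\mathcal{T}_{\mathcal{S}}(\vec{f})(x) := \Bigl(\sum_{P \in \mathcal{S}} \prod_{j=1}^m \langle f_j \rangle_{1,3P}^2 \cdot 1_P(x)\Bigr)^{\frac{1}{2}}
\end{equation*}
is the multilinear square-function sparse operator. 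The $\log^{\frac{1}{2}+m}$ factor is consistent with the linear case ($m=1$ gives $\log^{3/2}$) and is produced by iterating the argument of Sections \ref{secweak2}--\ref{sparse-lastpart} in the $m$-linear setting, where each use of Lemma \ref{auxiliary}(b), (c) contributes a factor $\log^{1/2}(2+\alpha)$.

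Next, I would apply the sharp weighted estimate for $\mathcal{T}_{\mathcal{S}}$. Since the outer sum over $\vec{k}$ has cardinality $3^n$, it suffices to bound a single $\mathcal{T}_{\mathcal{S}^{(\vec{k})}}$ in $L^p(\nu_{\vec{w}})$. The operator $\mathcal{T}_{\mathcal{S}}$ is exactly the type of multilinear square-function sparse form treated in \cite{BuiHormozi, CY}: using duality against $g \in L^{(p/2)'}(\nu_{\vec{w}})$, the square sparse form is linearized and compared with the standard multilinear sparse form $\sum_P \prod_j \langle f_j\rangle_{1,3P} \cdot 1_P$, for which the sharp $A_{\vec{p}}$ bound of Lerner--Ombrosi--Pérez--Torres--Trujillo-González is known. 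The square root structure produces the exponent $\max(\frac{1}{2},\frac{p_1'}{p},\ldots,\frac{p_m'}{p})$ on $[\vec{w}]_{A_{\vec{p}}}$, the $\frac{1}{2}$ coming from the square root in $\mathcal{T}_{\mathcal{S}}$ and the ratios $p_j'/p$ being the standard multilinear contribution from each slot.

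The main obstacle will be verifying rigorously that the sparse domination in Section \ref{s33} indeed yields the precise log-power $\log^{\frac{1}{2}+m}(2+\alpha)$ and carries the multilinear square-function shape $\mathcal{T}_{\mathcal{S}}$ above. Once that structural identity is established, the weighted step is routine: one appeals to the sharp $A_{\vec{p}}$ theory for multilinear square-function sparse operators as in \cite{BuiHormozi, CY}, applying it to each of the finitely many $\mathcal{T}_{\mathcal{S}^{(\vec{k})}}$ and summing. This produces exactly the quantity $K$ in the statement, completing the proof.
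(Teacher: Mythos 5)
Your two-step reduction---pass to the multilinear sparse square-function form via Theorem~\ref{domiTheorem2} (globalized by the standard three-lattices argument), then apply the sharp $A_{\vec p}$ bound for such sparse forms from \cite{BuiHormozi,CY}---is exactly the route the paper intends; the text says only ``Similar to the techniques given in \cite{BuiHormozi, CY}, one can prove the following theorems,'' and your outline is what that defers to. Your flagged ``main obstacle'' is not really one: Theorem~\ref{domiTheorem2} already supplies the sparse form with the precise $\alpha^{nm}\log^{\frac12+m}(2+\alpha)$ factor, and the structural constant there feeds in $\|S_{1,\psi}\|_{L^1\times\cdots\times L^1\to L^{1/m,\infty}}$ through Kolmogorov's inequality, which Theorem~\ref{Thm-multi-w} converts into the $A[\sqrt[m]{w}]^m_{\rm Dini}(1+[\sqrt[m]{\varphi}]^m_{\rm Dini})+\|S_{1,\psi}\|_{L^{p_1}\times\cdots\times L^{p_m}\to L^p}$ appearing in $K$. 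One small housekeeping point: after globalizing to the families $\mathcal{S}^{(\vec k)}$ the averages can be taken over $P$ rather than $3P$ (as in Theorem~\ref{domiCor1}), but this is immaterial for the weighted bound.
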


We can mix Theorem \ref{domiCor1}
and Theorem \ref{thm:S-sharp}.
Moreover, 
we can prove the variant of $g_{\lambda,\psi}^*$.
\begin{theorem}
 \label{thm:sharp2}
 Let $w$ and $\varphi$ be functions satisfying the Dini condition.
Let $\lambda> 2m$, $\vec{p}=(p_1,\ldots,p_m)$ with $1<p_1,\dotsc,p_m<\infty$ 
and $1/p_1+\dotsb+1/p_m=1/p$. If $\vec{w}=(w_1,\dotsc,w_m)\in A_{\vec{p}}$, 
then
 \begin{equation}
 \label{eq:sharp2}
\|g^*_{\lambda, \psi}\vec{f}\|_{L^p(\nu_{\vec{w}})} 
 \lesssim K \prod_{i=1}^m \|f_i\|_{L^{p_i}(w_i)}
 \end{equation}
for all $\vec{f}=\{f_i\}_{i=1}^m$ 
satisfying $f_i \in L^{p_i}(w_i)$ for each $i$,
where the implicit constant is independent of $\vec{w}$
and $K$ is as above. 
\end{theorem}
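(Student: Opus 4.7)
The plan is to reduce Theorem \ref{thm:sharp2} to Theorem \ref{thm:S-sharp} via the pointwise domination of $g^*_{\lambda,\psi}$ by a geometric series of cone-type square functions with growing apertures, exactly as recorded in (\ref{grelationS}). This is the multilinear analogue of the scheme that gave (\ref{grelationS-linear}) in the linear case, and it is attractive because all of the hard work, namely the weighted inequality for $S_{\alpha,\psi}$, is already contained in Theorem \ref{thm:S-sharp}.

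To carry this out, first apply Minkowski's inequality in $L^p(\nu_{\vec{w}})$ to the pointwise estimate (\ref{grelationS}) to obtain
\[
\|g^*_{\lambda,\psi}\vec{f}\|_{L^p(\nu_{\vec{w}})} \lesssim \sum_{k=0}^{\infty} 2^{-k\lambda n/2}\,\|S_{2^{k+1},\psi}\vec{f}\|_{L^p(\nu_{\vec{w}})}.
\]
Next, invoke Theorem \ref{thm:S-sharp} with aperture $\alpha_k = 2^{k+1}$ on each summand. For every $k$ this produces a bound of the form $K_k \prod_{i=1}^m \|f_i\|_{L^{p_i}(w_i)}$, where $K_k$ carries a factor $\alpha_k^{nm}\log^{1/2+m}(2+\alpha_k) \sim 2^{knm}(k+2)^{1/2+m}$ multiplying the Dini and weight constants appearing in the definition of $K$. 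Because those Dini and weight constants are independent of $k$, they factor cleanly out of the sum.

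It remains to estimate the scalar series
\[
\sum_{k=0}^{\infty} 2^{-k\lambda n/2}\,2^{(k+1)nm}(k+2)^{1/2+m} = 2^{nm}\sum_{k=0}^{\infty}(k+2)^{1/2+m}\,2^{-kn(\lambda-2m)/2}.
\]
The hypothesis $\lambda>2m$ makes $2^{-n(\lambda-2m)/2}<1$, so the polynomial factor $(k+2)^{1/2+m}$ is absorbed by the geometric decay and the series converges to a finite constant. Collecting this constant into $K$ (by taking, for instance, $\alpha=1$ in the definition of $K$ and adjusting the implicit multiplicative constant) yields (\ref{eq:sharp2}). The only real obstacle is this summability, which is precisely what the hypothesis $\lambda > 2m$ provides; once it is in place, no additional analysis beyond Theorem \ref{thm:S-sharp} is required, so there is no need to rerun the sparse-domination argument of Section \ref{s33}.
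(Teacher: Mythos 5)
Your reduction of $g^*_{\lambda,\psi}$ to the family $\{S_{2^{k+1},\psi}\}_{k\ge 0}$ via (\ref{grelationS}), followed by an application of Theorem \ref{thm:S-sharp} to each term, is precisely the route the paper sets up: (\ref{grelationS}) is derived expressly to transfer estimates for $S_{\alpha,\psi}$ to $g^*_{\lambda,\psi}$, and the paper merely remarks that the weighted bounds then follow ``similar to the techniques given in \cite{BuiHormozi, CY}.'' Your bookkeeping of the $\alpha_k^{nm}\log^{1/2+m}(2+\alpha_k)$ factor and the convergence condition $\lambda>2m$ is correct.

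One small technical point deserves to be spelled out: since $1/p = 1/p_1+\cdots+1/p_m$ with all $p_i>1$, the target exponent $p$ can lie in $(1/m,1)$, and then $L^p(\nu_{\vec w})$ is only quasi-normed, so Minkowski's inequality as invoked does not literally hold. The fix is to use the $p$-th power version of the quasi-triangle inequality,
\[
\Big\|\sum_{k} a_k\Big\|_{L^p(\nu_{\vec w})}^p \le \sum_k \|a_k\|_{L^p(\nu_{\vec w})}^p ,
\]
which yields
\[
\|g^*_{\lambda,\psi}\vec f\|_{L^p(\nu_{\vec w})}^p
\lesssim \sum_{k=0}^{\infty} 2^{-kp\lambda n/2}\,\|S_{2^{k+1},\psi}\vec f\|_{L^p(\nu_{\vec w})}^p .
\]
The resulting series $\sum_k (k+2)^{(1/2+m)p}\,2^{-kpn(\lambda-2m)/2}$ still converges under the hypothesis $\lambda>2m$, so the conclusion is unchanged. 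With that adjustment the proof is complete and coincides with the paper's intended argument.
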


\subsection{Key lemmas}
\label{sect3.3}


Here and below we write
$d\vec{y}=dy_1 \cdots dy_m$.
We use the counterpart of Lemma \ref{auxiliary} to the multilinear setting:
 \begin{lemma}\label{Yabuta} Let $w$ be a modulus of continuity 
satisfying
the Dini condition. Then for $\alpha \geq 1$ and $t>0$, we have
\begin{eqnarray*}
&&
\sup_{|z-x|<\alpha t} \int_{\Rnm} \left(t+\sum_{i=1}^{m}|z-y_i| \right)^{-nm}w \left(\frac{t}{t+\sum_{i=1}^{m}|z-y_i| } \right) \Big| \prod_{i=1}^m f_i(y_i) \Big| d\vec{y}
\\ 
&&\lesssim
\alpha^{n m}\log(2+\alpha)
[w]_{\rm Dini}
\prod_{i=1}^m{\mathcal M}f_i(x)
\end{eqnarray*}
 for all $x \in {\mathbb R}^n$
 and
for all $\vec{f}=\{f_i\}_{i=1}^m \subset L^1(\Rn)$.
 \end{lemma}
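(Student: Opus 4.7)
The plan is to reduce the $z$-centered integral to an $x$-centered one via a dyadic decomposition of $\mathbb{R}^{nm}$ based on the size of $\sum_{i=1}^{m}|x-y_i|$ relative to $\alpha t$. Since $|z-x|<\alpha t$, the triangle inequality gives $\bigl||x-y_i|-|z-y_i|\bigr|\le\alpha t$ for every $i$, and this is the only bridge between the two centers that I need.

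First, I would partition $\mathbb{R}^{nm}$ into the inner region $B_0:=\{\vec y:\sum_i|x-y_i|\le 2m\alpha t\}$ and the annuli $B_j:=\{\vec y:2^jm\alpha t<\sum_i|x-y_i|\le 2^{j+1}m\alpha t\}$ for $j\ge 1$. On $B_0$ each $y_i$ lies in $B(x,2m\alpha t)$; bounding $(t+\sum_i|z-y_i|)^{-nm}\le t^{-nm}$ and $w(\cdot)\le w(1)$, the $m$-fold integration against $\prod_i|f_i(y_i)|$ is at most $t^{-nm}\prod_i v_n(2m\alpha t)^n{\mathcal M}f_i(x)\lesssim\alpha^{nm}w(1)\prod_i{\mathcal M}f_i(x)$. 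This is the sole source of the $\alpha^{nm}$ factor in the final bound.

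On each annulus $B_j$ with $j\ge 1$, the shift estimate gives $\sum_i|z-y_i|\ge\sum_i|x-y_i|-m|z-x|\ge 2^{j-1}m\alpha t$. This yields $(t+\sum_i|z-y_i|)^{-nm}\lesssim(2^jm\alpha t)^{-nm}$ and, by monotonicity of $w$, $w\bigl(t/(t+\sum_i|z-y_i|)\bigr)\le w\bigl(2^{-j+1}/(m\alpha)\bigr)\le w(2^{-j+1})$. Since each $y_i\in B(x,2^{j+1}m\alpha t)$ on $B_j$, the $m$-fold integration produces $\prod_i v_n(2^{j+1}m\alpha t)^n{\mathcal M}f_i(x)$, and the powers of $2^j$, $m\alpha$, and $t$ cancel exactly, leaving a per-annulus bound $\lesssim w(2^{-j+1})\prod_i{\mathcal M}f_i(x)$.

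Summing $\sum_{j\ge 1}w(2^{-j+1})\lesssim[w]_{\rm{Dini}}$ via the integral comparison test as in Lemma \ref{auxiliary}(c), and combining with the $B_0$ contribution, yields the claimed bound. The only delicate point is arranging the dyadic split so that the aperture cost is confined to $B_0$---where each of the $m$ variables contributes a factor of $(\alpha t)^n$ yielding $\alpha^{nm}$---while on each annulus the kernel decay cancels the ball-volume growth exactly. The $\log(2+\alpha)$ in the statement appears to be a conservative over-estimate consistent with Lemma \ref{auxiliary}(b) and is not actually required by the argument sketched above.
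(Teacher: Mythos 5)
Your proposal is correct, and it takes a genuinely different route from the paper's proof. The paper decomposes $\mathbb{R}^{nm}$ into $Q(x,t)^m$ and the annuli $Q(x,2^kt)^m\setminus Q(x,2^{k-1}t)^m$, i.e.\ dyadic shells at scale $t$, and then pushes the comparison $2m\alpha\bigl(t+\sum_i|z-y_i|\bigr)\ge t+\sum_i|x-y_i|$ onto each shell. On the $k$-th shell this produces a kernel factor $\alpha^{nm}(2^kt)^{-nm}$ and the argument of $w$ becomes $2^{2-k}m\alpha$, so the resulting sum $\sum_k w(2^{2-k}m\alpha)$ is controlled by Lemma~\ref{auxiliary}(c), which is precisely where the $\log(2+\alpha)$ enters: for $k\lesssim\log\alpha$ the argument of $w$ is $\ge 1$ and each such term contributes $w(1)$. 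Your decomposition instead starts the annuli at scale $\alpha t$, so the entire aperture penalty is absorbed into the ball $B_0$ (giving $\alpha^{nm}w(1)$), while on each outer annulus the shift estimate $\sum_i|z-y_i|\ge\sum_i|x-y_i|-m\alpha t$ forces the argument of $w$ down to $\lesssim 2^{-j}$ with no residual $\alpha$, so the sum is simply $\sum_{j\ge 1}w(2^{1-j})\lesssim[w]_{\rm Dini}$ and no logarithm appears. Both arguments prove the stated inequality; yours in fact yields the sharper bound $\alpha^{nm}[w]_{\rm Dini}\prod_i\mathcal{M}f_i(x)$ without the $\log(2+\alpha)$ factor, so your closing remark that the logarithm is superfluous here is accurate (it is the paper's choice of $t$-scale annuli, not the lemma itself, that generates it). The one small bookkeeping point worth flagging is that $w(1)\le[w]_{\rm Dini}$ by definition, which is what lets you fold the $B_0$ term into the final constant.
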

 \begin{proof} 
 Fix $x$.
 Let $(z,t) \in \Gamma_\alpha(x)$.
We calculate
\begin{align*}
2m\alpha\left(t+\sum_{i=1}^m|z-y_i|\right)
&\ge
m t+m|z-x|+\sum_{i=1}^m|z-y_i|
\ge
t+\sum_{i=1}^m|x-y_i|.
\end{align*}
We estimate
\begin{align*}
\lefteqn{\int_{\Rnm} 
 \left(t+\sum_{i=1}^{m}|z-y_i| \right)^{-nm}w \left(\frac{t}{t+\sum_{i=1}^{m}|z-y_i| } \right) 
 \Big| \prod_{i=1}^m f_i(y_i) \Big| d\vec{y}
 }\\
&\lesssim
\frac{w(1)}{t^{nm}}
\int_{Q(x,t)^m}
\prod_{i=1}^m|f_i(y_i)|d\vec{y}\\
&\quad+
\sum_{k=1}^\infty
\frac{\alpha^{nm}}{2^{k n m}t^{nm}}
\int_{Q(x,2^k t)^m \setminus Q(x,2^{k-1}t)^m}
w(2^{2-k}m\alpha)
\prod_{i=1}^m|f_i(y_i)|d\vec{y}\\
&\lesssim
w(1)
\prod_{i=1}^m{\mathcal M}f_i(x)
+
\sum_{k=1}^\infty
\alpha^{nm}w(2^{2-k}m\alpha)
\prod_{i=1}^m{\mathcal M}f_i(x)\\
&\lesssim
\alpha^{n m}\log(2+\alpha)
[w]_{\rm Dini}
\prod_{i=1}^m{\mathcal M}f_i(x),
\end{align*}
as required.
 \end{proof}

The next step is the core of the proof
of the weak-$(1,1)$ boundedness.
For functions $\theta_1$ and $\theta_2$,
write
$$
\widetilde{S}_{1,\psi}(\theta_1,\theta_2)(x) :=\iint_{\Gamma_1(0) } \Big| \frac{1}{t^{2n}} \int_{\Rt} \psi \Big(
\frac{x+z}{t}, \frac{y_1}{t}, \frac{y_2}{t} \Big) \theta_{1}(y_1)\theta_2(y_2) d\vec{y} \Big| \frac{dz dt}{t^{n+1}} .
$$
We set
\[I_{\theta_1,\theta_2}(x,z,t) := \Big| \frac{1}{t^{2n}} \int_{\Rt} \psi \Big(
\frac{x+z}{t}, \frac{y_1}{t}, \frac{y_2}{t} \Big) \theta_{1}(y_1)\theta_2(y_2) d\vec{y} \Big|.
\]
Let
\begin{align*}
A^{(k)}(x)&:=
\sqrt{
\frac{[{w}]_{\rm Dini}}{|x-c(Q_{k})|^{2n}}
\varphi\left(\frac{2\sqrt{n}\ell(Q_{k})}{|x-c(Q_{k})|}\right)}\\
B^{(k)}(x)&:=
\sqrt{
\int_{\ell(Q_{k})}^\infty
\frac{|Q_k|}{(t+|x-c(Q_{k})|)^{2n}}
w\left(\frac{t}{t+|x-c(Q_{k})|}\right)\frac{dt}{t^{n+1}}
}.
\end{align*}

We estimate
$I_{\theta_1,\theta_2}(x,z,t)$
as follows:
\begin{lemma}\label{lem:220406-1111}Let $\theta_{1}$ and $\theta_{2}$ be integrable functions.
Assume
that
$\theta_{1}$ and $\theta_{2}$ are supported on cubes
$Q_1$ and $Q_2$, respectively and that
\[
\int_{{\mathbb R}^n}\theta_{1}(x)dx=
\int_{{\mathbb R}^n}\theta_{2}(x)dx=0.
\]
Then, for $x\in \mathbb R^n\setminus (64nQ_1\cup 64nQ_2)$
\begin{align}
\iint_{\Gamma_1(0)} I_{\theta_1,\theta_2}(x,z,t) \frac{dz dt}{t^{n+1}}
\label{eq:220402-111}
\lesssim
\prod_{k=1}^2
\left(
A^{(k)}(x)
+
B^{(k)}(x)
\right)
\|\theta_{k}\|_{L^1}.
\end{align}
\end{lemma}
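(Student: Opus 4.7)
The proof extends the strategy of Lemma \ref{lem:220411-1} to the bilinear setting. The plan is to partition the cone $\Gamma_1(0)$ into four disjoint regions $E_{i_1 i_2}$ indexed by $i_1, i_2 \in \{S, L\}$: the label ``L'' for index $k$ corresponds to $|x+z-c(Q_k)| \ge 32n\ell(Q_k)$, and ``S'' to the opposite. We then bound $\iint_{E_{i_1 i_2}} I_{\theta_1,\theta_2}(x,z,t)\, dz dt/t^{n+1}$ on each of the four regions, seeking a contribution of the form $X^{(1)}_{i_1}(x) X^{(2)}_{i_2}(x) \|\theta_1\|_{L^1}\|\theta_2\|_{L^1}$, where $X^{(k)}_L = A^{(k)}$ and $X^{(k)}_S = B^{(k)}$; summing over the four cases produces $(A^{(1)}+B^{(1)})(A^{(2)}+B^{(2)})$, as desired.

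On the ``LL'' region, we exploit the vanishing means of both $\theta_1$ and $\theta_2$ to replace the kernel $\psi/t^{2n}$ by its mixed difference
\[
\Delta(y_1,y_2)/t^{2n} := (\psi - \psi|_{y_1 \to c(Q_1)} - \psi|_{y_2 \to c(Q_2)} + \psi|_{y_1 \to c(Q_1),\, y_2 \to c(Q_2)})/t^{2n}.
\]
Since only first-order smoothness is available, we will extract two separate bounds on $|\Delta|/t^{2n}$, one with factor $\varphi(|y_1-c(Q_1)|/L)$ obtained by telescoping in the $y_1$-variable, and one with $\varphi(|y_2-c(Q_2)|/L)$ from $y_2$, and take their geometric mean to get
\[
|\Delta|/t^{2n} \lesssim L^{-2n}\sqrt{\varphi(\sqrt{n}\ell(Q_1)/L)\,\varphi(\sqrt{n}\ell(Q_2)/L)}\,w(t/L).
\]
Integrating over $z \in B(0,t)$ (which contributes a factor $\sim t^n$) and applying Cauchy--Schwarz in $t$ to split $L^{-2n}\sqrt{\varphi_1\varphi_2}\,w$ as $(L^{-n}\sqrt{\varphi_1 w})\cdot(L^{-n}\sqrt{\varphi_2 w})$, each factor will be controlled by $L \ge t+|x-c(Q_k)|$, the monotonicity of $\varphi$ and $w$, and the Dini estimate of Lemma \ref{auxiliary}, yielding $A^{(1)}(x)A^{(2)}(x)\|\theta_1\|_{L^1}\|\theta_2\|_{L^1}$.

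On the ``LS'' region (index $1$ large, index $2$ small), we apply the vanishing mean of $\theta_1$ only, producing a $\varphi(|y_1-c(Q_1)|/L)$ factor, and use only the size bound for $\theta_2$. The geometric constraint $|x+z-c(Q_2)|<32n\ell(Q_2)$ combined with $|x-c(Q_2)|>64n\ell(Q_2)$ forces $t \gtrsim \ell(Q_2)$, exactly as in Lemma \ref{lem:220411-1}; the $z$-slice of $E_{LS}$ at height $t$ has volume $\lesssim \ell(Q_2)^n$. The $(z,t)$-integration, again split by Cauchy--Schwarz, factors into an $A^{(1)}$-type piece and a $B^{(2)}$-type piece. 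The ``SL'' case is symmetric. The ``SS'' case uses only the size condition with both smallness constraints active, forcing $t \gtrsim \max(\ell(Q_1),\ell(Q_2))$, and yields $B^{(1)}(x)B^{(2)}(x)\|\theta_1\|_{L^1}\|\theta_2\|_{L^1}$.

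The main technical obstacle will be the mixed-difference estimate on $\Delta$ under only first-order smoothness: the geometric-mean device is essential to extract the $\sqrt{\varphi_1\varphi_2}$ product from two first-order bounds. A secondary delicate point is arranging the Cauchy--Schwarz in the $(z,t)$-integral so that the resulting $\sqrt{\,\cdot\,}$ forms match $A^{(k)}$ and $B^{(k)}$ precisely; this requires a symmetric splitting of $w(t/L)$ as $\sqrt{w}\cdot\sqrt{w}$ and using $L \ge t+|x-c(Q_k)|$ to replace the joint quantity $L$ by the per-factor lower bounds that appear in the definitions of $A^{(k)}$ and $B^{(k)}$.
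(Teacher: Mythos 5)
Your four-region decomposition of $\Gamma_1(0)$ (indexed by whether $|x+z-c(Q_k)|$ is large or small for $k=1,2$) is exactly the paper's, and the treatment of the $SS$, $LS$, and $SL$ regions matches the paper's cases (a), (b), (c) in substance. The one place where you depart from the paper is the $LL$ region, and it is worth spelling out the difference. The paper never forms a mixed second difference $\Delta$ of the kernel. Instead it observes that on $LL$ the integrand $I_{\theta_1,\theta_2}(x,z,t)$ admits \emph{two} separate upper bounds, one obtained by applying the smoothness condition in $y_1$ together with $\int\theta_1=0$ (giving $\lesssim (t+|x-c(Q_1)|)^{-2n}\,\varphi_1\, w(\cdot)\|\theta_1\|_{L^1}\|\theta_2\|_{L^1}$), and another symmetrically in $y_2$. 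Each of these is integrated over $\Gamma_1(0)$ to produce $A^{(1)}(x)^2\|\theta_1\|_{L^1}\|\theta_2\|_{L^1}$ and $A^{(2)}(x)^2\|\theta_1\|_{L^1}\|\theta_2\|_{L^1}$ respectively, and then the geometric mean of the two \emph{already integrated} bounds yields $A^{(1)}A^{(2)}$ directly. No Cauchy--Schwarz in $t$, and no $\sqrt{\varphi_1\varphi_2}$ appearing pointwise.

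Your route — replace $\psi$ by the mixed difference $\Delta$, prove $|\Delta|\lesssim L^{-2n}\sqrt{\varphi_1\varphi_2}\,w(t/L)$ by geometric-averaging two single-order telescoping bounds on $\Delta$, pull out $\sqrt{\varphi_1\varphi_2}$ as constants, and then Cauchy--Schwarz the remaining $t$-integral — is correct but strictly more work. In particular the passage to $\Delta$ buys you nothing: $I$ itself already satisfies both of the single-difference upper bounds (you only need \emph{one} of $\int\theta_1=0$ or $\int\theta_2=0$ to produce each bound), so the elementary inequality $I\le\min(\mathrm{bd}_1,\mathrm{bd}_2)\le\sqrt{\mathrm{bd}_1\cdot\mathrm{bd}_2}$ applies directly to $I$ without invoking $\Delta$; and since each $\iint \mathrm{bd}_k \,\frac{dz\,dt}{t^{n+1}}$ is computed separately anyway, the geometric mean can be taken after integration, which avoids the Cauchy--Schwarz step entirely. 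The same remark applies to your use of Cauchy--Schwarz on the $LS$/$SL$ regions: there one bound on $I$ (from differencing in $y_1$) gives $A^{(1)2}$ after integration, and the size-condition bound on $I$ (using $t\gtrsim\ell(Q_2)$) gives $B^{(2)2}$, and the paper just takes the geometric mean of these two integrals. So: your proof plan is sound and would close, but the mixed-difference device and the pointwise/Cauchy--Schwarz factorizations are unnecessary overhead relative to the paper's direct geometric mean of the two integral estimates.
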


\begin{proof}
Let $x\in \mathbb R^n\setminus (64nQ_1\cup 64nQ_2)$. 
We write
$$
E_{k}(x):= \Big\{z \in \Rn: |x+z-c(Q_{k})|<16n \ell(Q_{k}) \Big\}
$$
for each $k=1,2$.

Let
\begin{equation}\label{eq:220506-1}
L:=t+ |x+z-y_1|+|x+z-y_2|.
\end{equation}
We
partition
$\Gamma_1(0)$ into $4$ domains:
\begin{align*}
\Gamma_1(0)
&=
\Gamma_1(0) \cap ((E_{1}(x) \cap E_{2}(x)) \times {\mathbb R})\\
&\quad
\cup
\Gamma_1(0) \cap ((E_{1}(x)^{\rm c} \cap E_{2}(x)) \times {\mathbb R})\\
&\quad
\cup
\Gamma_1(0) \cap ((E_{1}(x) \cap E_{2}(x)^{\rm c}) \times {\mathbb R})\\
&\quad
\cup
\Gamma_1(0) \cap ((E_{1}(x)^{\rm c} \cap E_{2}(x)^{\rm c}) \times {\mathbb R}).
\end{align*}
\begin{itemize}
\item
{\bf (a)} Let $z \in E_{1}(x) \cap E_{2}(x)$.
Then
similar to 
(\ref{eq:220402-1}),
we have
\begin{equation}\label{eq:220402-4}
t>16\max(\ell(Q_{1}),\ell(Q_{2})).
\end{equation}
Recall that $L$ is given by 
(\ref{eq:L}).
It 
follows 
from the size condition
and
(\ref{eq:220330-207}) that
\begin{align*}
I_{\theta_1,\theta_2}(x,z,t) 
&\leq \int_{\Rt}\frac{1}{L^{2n}}w \left( \frac{t}{L} \right) 
\Big|\theta_1(y_1) \theta_2(y_2) \Big| d\vec{y}.
\end{align*}
Let $k=1,2$.
We estimate
\[
L
\ge
\frac12t+\frac12|x-y_k|
\ge
\frac1{4}(t+|x-c(Q_{k})|)
\]
using (\ref{eq:220402-4}).
The result is:
\[
I_{\theta_1,\theta_2}(x,z,t) 
\lesssim
\frac{1}{(t+|x-c(Q_{k})|)^{2n}}w \left( \frac{4t}{t+|x-c(Q_{k})|} \right)
\times
\|\theta_1\|_{L^1}\|\theta_2\|_{L^2}.
\]
We integrate this inequality over
$\Gamma_1(0)\cap (E_1(x)\times \mathbb R)$.
\begin{equation}\label{eq:220402-1112}
\iint_{\Gamma_1(0)\cap (E_1(x)\times \mathbb R)} I_{\theta_1,\theta_2}(x,z,t) 
\frac{dz dt}{t^{n+1}}
\lesssim
\|\theta_1\|_{L^1} \|\theta_2\|_{L^1}B^{(k)}(x)^2.
\end{equation}
If we take the geometric mean of 
estimate
(\ref{eq:220402-1112})
over $k=1,2$,
then we have a term
that is included
in the right-hand side of
(\ref{eq:220402-111}).
\item
{\bf (b)} Let $z \in E_{2}(x) \setminus E_{1}(x)$.
In this case, we have
\[
\frac{|y_1-c(Q_{1})|}{t}\le\frac{n\ell(Q_{1})}{t}
<\frac{|x+z-c(Q_{1})|}{8t}.
\]
From the smoothness condition
and the moment condition of $\theta_1$,
we obtain
\begin{align*}
\lefteqn{
I_{\theta_1,\theta_2}(x,z,t)
}\\
&=\Big| \frac{1}{t^{2n}} \int_{\Rt} \Big( \psi \Big(
\frac{x+z}{t}, \frac{y_1}{t}, \frac{y_2}{t}\Big) - \psi \Big(
\frac{x+z}{t}, \frac{c(Q_{1})}{t}, \frac{y_2}{t} 
\Big)
\Big)
 \theta_1(y_1)\theta_2(y_2) d\vec{y} \Big|\\
&\lesssim \int_{\Rt} \frac{1}{L^{2n}}w \left( \frac{t}{L} \right)
\varphi \left( \frac{n\ell(Q_{1})}{L} \right)
 \Big|\theta_1(y_1) \theta_2(y_2) \Big| d\vec{y}\\
&\lesssim 
\|\theta_1\|_{L^1} \|\theta_2\|_{L^1}\times
\frac{1}{(t+|x-c(Q_{1})|)^{2n}}
\varphi\left(\frac{4n\ell(Q_{1})}{|x-c(Q_{1})|}\right)
w\left(\frac{4t}{t+|x-c(Q_{1})|}\right)
\end{align*}
for all $t>0$
from 
(\ref{eq:220330-201}).
If we integrate this estimate against
$(z,t) \in \Gamma_0$,
then we obtain
\begin{equation}\label{eq:220426-1}
\iint_{\Gamma_1(0)}
I_{\theta_1,\theta_2}(x,z,t)\frac{dzdt}{t^{n+1}}
\lesssim
\|\theta_1\|_{L^1} \|\theta_2\|_{L^1}
A^{(1)}(x)^2.
\end{equation}
Meanwhile,
(\ref{eq:220402-1112}) is still available 
for $k=2$.
Thus,
if we take the geometric mean of 
estimates
(\ref{eq:220402-1112})
with $k=2$
and
(\ref{eq:220426-1}),
then we have 
\[
\iint_{\Gamma_1(0)}
I_{\theta_1,\theta_2}(x,z,t)\frac{dzdt}{t^{n+1}}
\lesssim
\|\theta_1\|_{L^1} \|\theta_2\|_{L^1}
A^{(1)}(x)B^{(2)}(x),
\]
which is included
in the right-hand side of
(\ref{eq:220402-111}).

\item
{\bf (c)} Let $z \in E_{1}(x) \setminus E_{2}(x)$.
Simply swap the role of $j_1$ and $j_2$
in {\bf (b)} to have
\[
\iint_{\Gamma_1(0)}
I_{\theta_1,\theta_2}(x,z,t)\frac{dzdt}{t^{n+1}}
\lesssim
\|\theta_1\|_{L^1} \|\theta_2\|_{L^1}
B^{(1)}(x)A^{(2)}(x).
\]
\item
{\bf (d)} Let $z \in E_{1}(x)^{\rm c} \cap E_{2}(x)^{\rm c}$.
Argue as in {\bf (b)} to have
\[
\iint_{\Gamma_1(0)}
I_{\theta_1,\theta_2}(x,z,t)\frac{dzdt}{t^{n+1}}
\lesssim
\|\theta_1\|_{L^1} \|\theta_2\|_{L^1}
A^{(k)}(x)^2
\]
for $k=1,2$.
As before, we take the geometric mean
over
$k=1,2$ to have a term
that is included
in the right-hand side of
(\ref{eq:220402-111}).
\end{itemize}
\end{proof}

\subsection{Calder\'on--Zygmund decomposition--Setup}
\label{sect3.4}

Here and below for the sake of simplicity, we assume $m=2$.
 Suppose $\vec{f}=(f_1, f_2) \in L^1({\mathbb R}^n)^2$
with 
\begin{equation}\label{eq:220330-206}
\|f_i\|_{L^1}=1 \quad (i=1,2)
\end{equation}
and $\rho > 0$ is fixed. 
We will 
form the Calder\'on--Zygmund decomposition of each $f_i$
at height $\rho^{\frac12}$
as in \cite{GT1}. For each $i=1,2$, we write 
$f_i=g_i + b_i$, where there is a 
disjoint
collection of dyadic cubes 
$\{ Q_{i,j} \}_j \subset {\mathcal D}({\mathbb R}^n)$ 
such that $\supp b_{i,j} \in Q_{i,j} $ and each $b_i= \sum_{j} b_{i,j}$. Moreover, we have 
\begin{equation}\label{eq:220407-11111}
\int_{Q_{i,j}} b_{i,j}(x) dx =0, 
\end{equation}
\begin{equation}\label{eq:220407-11112}
\Big| \bigcup_j Q_{i,j} \Big| \lesssim \rho^{-\frac12},\end{equation}
\begin{equation}\label{eq:220407-11113}
 \|b_{i,j}\|_{L^1} \lesssim \|f_i\|_{L^1(Q_{i,j})},\end{equation}
\begin{equation}\label{eq:220407-11114}
\|g_i\|_{L^1}\le1,\end{equation}
 and
\begin{equation}\label{eq:220407-11115}
\|g_i\|_{L^\infty} \lesssim \rho^{\frac12}.\end{equation}
In particular, by interpolation
between
(\ref{eq:220407-11114})
and
(\ref{eq:220407-11115}),
we have
\begin{equation}\label{eq:220330-204}
 \|g_i\|_{L^{p_i}} \lesssim \rho^{\frac{1}{2 p'_i}}.
\end{equation}
Let
$$
\Omega= \bigcup_{k=1}^2 \left( \bigcup_{j} 64n Q_{k,j} \right).
$$
We summarize what we need for the proof of the theorem:
\begin{lemma}\label{lem:Omega}
Let $k=1,2$ and $x \in \Rn \setminus \Omega$.
\begin{enumerate}
\item[$(1)$]
We have
\begin{equation}\label{eq:220330-209}
\Big| \Omega\Big| \lesssim \rho^{-\frac12}.
\end{equation}
\item[$(2)$]
For all $j$, we have
\begin{equation}\label{eq:220330-207}
|x-c(Q_{k,j})|> 32n \ell(Q_{k,j}).
\end{equation}
\end{enumerate}
\end{lemma}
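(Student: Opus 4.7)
The plan is to deduce both parts directly from the construction of the Calder\'on--Zygmund decomposition recorded in \eqref{eq:220407-11112} and the definition of $\Omega$. Neither part requires any new analytic input; they are bookkeeping statements that will be invoked repeatedly in Sections \ref{s:Good}--\ref{s:Bad}.

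For part (1), I would use subadditivity of Lebesgue measure together with the fact that, for each fixed $k$, the cubes $\{Q_{k,j}\}_j$ produced by the Calder\'on--Zygmund decomposition are pairwise disjoint. The dilates $64n\,Q_{k,j}$ need no longer be disjoint, so I write
\[
|\Omega|
\le
\sum_{k=1}^{2}\Big|\bigcup_{j} 64n\,Q_{k,j}\Big|
\le
\sum_{k=1}^{2}\sum_{j}|64n\,Q_{k,j}|
=(64n)^{n}\sum_{k=1}^{2}\sum_{j}|Q_{k,j}|
=(64n)^{n}\sum_{k=1}^{2}\Big|\bigcup_{j}Q_{k,j}\Big|,
\]
where the last equality uses disjointness of the $Q_{k,j}$ at a fixed level $k$. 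Substituting the Calder\'on--Zygmund bound \eqref{eq:220407-11112} into both terms then yields $|\Omega|\lesssim \rho^{-1/2}$, which is \eqref{eq:220330-209}.

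For part (2), I would simply unwind the definition of $\Omega$. If $x\in \R^n\setminus \Omega$, then in particular $x\notin 64n\,Q_{k,j}$ for every $k\in\{1,2\}$ and every $j$. Recalling the convention that $Q(c,r)$ denotes the cube with centre $c$ and side length $2r$, the dilate $64n\,Q_{k,j}$ has the same centre $c(Q_{k,j})$ and side length $64n\,\ell(Q_{k,j})$, so the statement $x\notin 64n\,Q_{k,j}$ is equivalent to $\|x-c(Q_{k,j})\|_{\infty}>32n\,\ell(Q_{k,j})$. Since the Euclidean norm dominates the $\ell^\infty$-norm, this gives $|x-c(Q_{k,j})|>32n\,\ell(Q_{k,j})$, which is precisely \eqref{eq:220330-207}.

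The only step deserving any care is the translation between the set-theoretic statement ``$x$ lies outside the dilate $64n\,Q$'' and the explicit distance-from-centre bound in \eqref{eq:220330-207}; this requires one to remember the normalizations $\ell(Q(x,r))=2r$ and $c(Q(x,r))=x$ fixed in the notational list of the introduction. Beyond that, there is no genuine obstacle: the lemma is a direct consequence of the standard Calder\'on--Zygmund decomposition applied to each $f_i$ at height $\rho^{1/2}$.
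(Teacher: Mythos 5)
Your proof is correct and matches the paper's argument: part (1) is the same chain of subadditivity, the scaling identity $|64n\,Q_{k,j}|=(64n)^n|Q_{k,j}|$, disjointness at each level $k$, and the Calder\'on--Zygmund bound \eqref{eq:220407-11112}; part (2) the paper dismisses as "clear from the definition of $\Omega$," which is exactly the definition-unwinding you spell out (noting $\ell(Q(x,r))=2r$ and that the Euclidean norm dominates $\ell^\infty$).
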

\begin{proof}\
\begin{enumerate}
\item[$(1)$]
Thanks to 
(\ref{eq:220330-206})
and
(\ref{eq:220407-11112}),
\begin{align*}
\Big| \Omega\Big| \leq \sum_{k=1}^2 \sum_{j} |64n Q_{k,j}|
=(64n)^n \sum_{k=1}^2 \sum_{j} | Q_{k,j}| 
\lesssim 
\sum_{k=1}^2 \rho^{-\frac12} \lesssim \rho^{-\frac12}.
\end{align*}
\item[$(2)$]
This is clear from the definition of $\Omega$.
\end{enumerate}
\end{proof}

\begin{lemma}\label{lem:220330-111}
$
\left|\left\{x \in \Rn: {\mathcal M}b_1(x) {\mathcal M}b_2(x) >\rho\right\}\right|
\lesssim \rho^{-\frac12}.
$
\end{lemma}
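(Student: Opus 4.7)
The plan is to exploit the elementary observation that a product of two nonnegative numbers can exceed $\rho$ only if at least one of them exceeds $\sqrt{\rho}$. This yields the set inclusion
\[
\{x \in \Rn: {\mathcal M}b_1(x){\mathcal M}b_2(x) > \rho\}
\subseteq
\{x \in \Rn: {\mathcal M}b_1(x) > \rho^{1/2}\}\cup
\{x \in \Rn: {\mathcal M}b_2(x) > \rho^{1/2}\},
\]
after which the problem reduces to two single-function weak-type estimates.

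Next, I would apply the classical weak-$(1,1)$ boundedness of the Hardy--Littlewood maximal operator $\mathcal{M}$ to each of the two sets on the right-hand side. This bounds the measure of each set by a constant multiple of $\rho^{-1/2}\|b_i\|_{L^1}$. It therefore suffices to check that $\|b_i\|_{L^1}\lesssim 1$ for $i=1,2$. This follows immediately from the Calder\'on--Zygmund decomposition: summing (\ref{eq:220407-11113}) over $j$ and using the pairwise disjointness of the cubes $\{Q_{i,j}\}_j$ gives
\[
\|b_i\|_{L^1}\le \sum_j \|b_{i,j}\|_{L^1}
\lesssim \sum_j \|f_i\|_{L^1(Q_{i,j})}\le \|f_i\|_{L^1}=1,
\]
by the normalization (\ref{eq:220330-206}).

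Combining these two ingredients produces the bound
\[
\left|\{x \in \Rn: {\mathcal M}b_1(x){\mathcal M}b_2(x) > \rho\}\right|
\lesssim \rho^{-1/2}(\|b_1\|_{L^1}+\|b_2\|_{L^1})\lesssim \rho^{-1/2},
\]
as desired. There is no real obstacle here: the lemma is a quick corollary of the weak-$(1,1)$ bound for $\mathcal{M}$ together with the product-decoupling inclusion above. The only point requiring mild care is the bookkeeping that $\|b_i\|_{L^1}$ stays bounded, which is standard for the Calder\'on--Zygmund decomposition at height $\rho^{1/2}$.
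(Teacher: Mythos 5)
Your proof is correct and follows essentially the same route as the paper: the product-decoupling inclusion, the weak-$(1,1)$ bound for $\mathcal{M}$, and the $L^1$-control of $b_i$ coming from the Calder\'on--Zygmund decomposition and the normalization $\|f_i\|_{L^1}=1$. The only difference is that you spell out the bound $\|b_i\|_{L^1}\lesssim 1$ in slightly more detail, which the paper leaves implicit.
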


\begin{proof}
By the weak-$(1,1)$ inequality for ${\mathcal M}$
and the normalization
(\ref{eq:220330-206}),
we have
\begin{align*}
\lefteqn{
\left|\left\{x \in \Rn: {\mathcal M}b_1(x) {\mathcal M}b_2(x) >\rho\right\}\right|
}\\ 
&\leq \left|\left\{x \in \Rn: {\mathcal M}b_1(x) >\rho^{\frac12} \right\}\right|
+\left|\left\{x \in \Rn: {\mathcal M}b_2(x) >\rho^{\frac12} \right\}\right|\\
&\lesssim \rho^{-\frac12}\|b_1\|_{L^1}+ \rho^{-\frac12}\|b_2\|_{L^1} \lesssim \rho^{-\frac12},
\end{align*}
as required.
\end{proof}

\subsection{Good part}
\label{s:Good}
Since $S_{1,\psi}$ is assumed bounded from
$L^{p_1}(\Rn)\times L^{p_2}(\Rn)$ to $L^p(\Rn)$,
we deduce from
(\ref{eq:220330-204})
\begin{align*}
\rho \left|\left\{x \in {\mathbb R}^n\,:\, S_{1,\psi}\vec{g}(x)>\rho\right\}\right|^{\frac1p} 
&\leq \|S_{1,\psi}\vec{g}\|_{L^{p}} \\
&\lesssim
\|S_{1,\psi}\|_{L^{p_1} \times L^{p_2} \to L^p} 
\prod_{i=1}^2 \|g_i\|_{L^{p_i}}\\
&\lesssim
\|S_{1,\psi}\|_{L^{p_1} \times L^{p_2} \to L^p} 
\prod_{i=1}^2 \rho^{\frac{1}{2p'_i}}\\
&=
\|S_{1,\psi}\|_{L^{p_1} \times L^{p_2} \to L^p} \rho^{1-\frac{1}{2p}},
\end{align*}
where $\vec{g}=(g_1,g_2)$. 
So,
we have
$$
\left|\left\{x\in {\mathbb R}^n\,:\, S_{1,\psi}\vec{g}(x)>\rho\right\}\right|\lesssim \rho^{-\frac12}.
$$
\subsection{Bad part}
\label{s:Bad}

To deal with other parts, 
we treat the case $m=2$ for the sake of simplicity. 
Thus, we have
$g_1,g_2,b_1,b_2$.

Therefore, 
in view of (\ref{eq:220330-209}),
the proof of Theorem \ref{Thm-multi-w} hinges
on the following three estimates
(\ref{YI1})--(\ref{YI3}) below:
\begin{lemma}\label{lem:220330-101}\
\begin{itemize}
\item
The bad-good estimate
\begin{equation}\label{YI1}
\left|\left\{x \in \Rn \setminus \Omega: S_{1,\psi}(b_1,g_2)(x)>\rho\right\}\right| 
\lesssim \rho^{-\frac12} [{w}]_{\rm{Dini}} [{\varphi}]_{\rm{Dini}}
\end{equation}
holds.
\item
The good-bad estimate
\begin{equation}\label{YI2}
\left|\left\{x \in \Rn \setminus \Omega: S_{1,\psi}(g_1,b_2)(x)>\rho\right\}\right| 
\lesssim \rho^{-\frac12} [{w}]_{\rm{Dini}} [{\varphi}]_{\rm{Dini}} 
\end{equation}
holds.
\end{itemize}
\end{lemma}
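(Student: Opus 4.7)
By symmetry it suffices to prove (\ref{YI1}). The plan is to apply Chebyshev's inequality at level one,
\[
|\{x \in \Rn\setminus\Omega : S_{1,\psi}(b_1,g_2)(x)>\rho\}|
\leq \rho^{-1}\int_{\Rn\setminus\Omega}S_{1,\psi}(b_1,g_2)(x)\,dx,
\]
and to establish $\int_{\Rn\setminus\Omega}S_{1,\psi}(b_1,g_2)(x)\,dx\lesssim \rho^{1/2}[w]_{\rm{Dini}}[\varphi]_{\rm{Dini}}$; the factor $\rho^{-1/2}$ in (\ref{YI1}) then emerges directly. Writing $b_1=\sum_j b_{1,j}$ and using Minkowski's inequality in the $L^2(\Gamma_1(x),dydt/t^{n+1})$ norm that defines $S_{1,\psi}$, one has $S_{1,\psi}(b_1,g_2)(x)\leq\sum_j S_{1,\psi}(b_{1,j},g_2)(x)$, so it suffices to estimate each piece individually, summing in $j$ at the end via $\sum_j\|b_{1,j}\|_{L^1}\lesssim\|f_1\|_{L^1}=1$ and $\|g_2\|_{L^\infty}\lesssim\rho^{1/2}$.

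The key step is a bilinear analog of Lemma \ref{lem:220411-1}, namely
\[
S_{1,\psi}(b_{1,j},g_2)(x)
\lesssim A\,[w]_{\rm{Dini}}^{1/2}\,\|g_2\|_{L^\infty}\,\|b_{1,j}\|_{L^1}\,
\frac{\varphi\!\left(\ell(Q_{1,j})/|x-c(Q_{1,j})|\right)}{|x-c(Q_{1,j})|^n}
\]
for $x\in\Rn\setminus\Omega$. Following the pattern of Lemma \ref{lem:220406-1111}, I would split $\Gamma_1(0)$ into the near-cube set $E_j(x):=\{z:|x+z-c(Q_{1,j})|<16n\ell(Q_{1,j})\}$ and its complement. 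On the complement, the cancellation $\int b_{1,j}=0$ allows one to subtract $\psi(\cdot,c(Q_{1,j}),y_2)$ from $\psi(\cdot,y_1,y_2)$, and the smoothness condition in the first argument (whose hypothesis is automatically satisfied there) produces a factor $\varphi(\ell(Q_{1,j})/L)$ with $L=t+|x+z-y_1|+|x+z-y_2|$. Integrating $y_2$ out against the size kernel via the elementary radial bound
\[
\int_{\Rn}L^{-2n} w\!\left(t/L\right) dy_2
\lesssim (t+|x+z-y_1|)^{-n}w\!\left(t/(t+|x+z-y_1|)\right),
\]
and pulling out $g_2$ via its $L^\infty$ bound, reduces matters to a one-variable kernel over $\Gamma_1(0)$ on which Lemma \ref{auxiliary}(a) supplies $[w]_{\rm{Dini}}^{1/2}$ after squaring and integrating in $(z,t)$. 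On $E_j(x)$ itself the geometric constraint forces $t\gtrsim|x-c(Q_{1,j})|$, so the size condition alone yields an absolutely integrable tail of order $\ell(Q_{1,j})^{n/2}|x-c(Q_{1,j})|^{-3n/2}$, whose contribution is $O(1)$ after integrating in $x$.

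Integrating the displayed pointwise bound over $|x-c(Q_{1,j})|>32n\ell(Q_{1,j})$ introduces a factor $[\varphi]_{\rm{Dini}}$ via the change of variables used in Corollary \ref{cor:220330-1}; combining with the summation in $j$ and the $L^\infty$-bound on $g_2$ yields the stated $\rho^{1/2}[w]_{\rm{Dini}}[\varphi]_{\rm{Dini}}$. The main obstacle is the asymmetry of cancellation: because $g_2$ has no mean-zero property, Lemma \ref{lem:220406-1111} is not directly applicable, and the $y_2$-integration must proceed via the bare size estimate while still producing a pointwise kernel integrable over $\Rn\setminus\Omega$. The critical observation is that the one-sided cancellation in $y_1$ already yields enough $\varphi$-decay in $x$, so that the resulting tail is controlled by $[\varphi]_{\rm{Dini}}$ alone, without needing any smoothness of $\psi$ in the $y_2$ variable.
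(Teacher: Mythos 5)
Your proposal follows the same route as the paper: apply Chebyshev at height $\rho$, reduce to a pointwise bound on $S_{1,\psi}(b_{1,j},g_2)$ for $x$ outside $64nQ_{1,j}$, split the cone $\Gamma_1(0)$ into the near set $E_j(x)=\{z:|x+z-c(Q_{1,j})|<32n\ell(Q_{1,j})\}$ and its complement, use the cancellation of $b_{1,j}$ together with the smoothness in $y_1$ on the complement to produce the $\varphi$-factor, integrate $y_2$ out against the size kernel while freezing $g_2$ in $L^\infty$, and finally integrate in $x$ via the Dini lemmas. All of the ideas and intermediate steps match those in the paper's proof. Two small imprecisions are worth correcting. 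First, the displayed pointwise bound carries $[w]_{\rm Dini}^{1/2}$, but Lemma~\ref{auxiliary}(a) actually supplies $[w]_{\rm Dini}$: the $L^2(dt/t)$-norm of $\frac{1}{(1+t)^n}w(\frac{4t}{t+1})$ is $\lesssim [w]_{\rm Dini}$, not its square root, so the far-cube piece is bounded by $[w]_{\rm Dini}\varphi(\ell(Q_{1,j})/|x-c(Q_{1,j})|)/|x-c(Q_{1,j})|^n$ exactly as in the paper. Second, the displayed formula covers only the far-cube piece; the near-cube piece must be carried along as an additive tail. Your way of handling it is slightly different from the paper's: you observe that $z\in E_j(x)$ forces $t\gtrsim|x-c(Q_{1,j})|$ and deduce a crude tail of size $w(1)\,\ell(Q_{1,j})^{n/2}|x-c(Q_{1,j})|^{-3n/2}$, whereas the paper retains the finer sum $\sum_k 2^{-kn/2}(2^k\ell(Q_{1,j})+|x-c(Q_{1,j})|)^{-n}w(\cdots)$ and integrates via Lemma~\ref{lem:210801-1} to get $[w]_{\rm Dini}$. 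Both yield an $x$-integrable tail controlled by $[w]_{\rm Dini}$, and both eventually produce the stated $\rho^{-1/2}[w]_{\rm Dini}[\varphi]_{\rm Dini}$ bound after summing in $j$ and invoking $\|g_2\|_{L^\infty}\lesssim\rho^{1/2}$. With the exponent on $[w]_{\rm Dini}$ corrected, your argument is sound.
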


\begin{proof}
We concentrate on ($\ref{YI1}$)
due to similarity; simply swap the role of $b_1$ and $g_2$ for the proof of ($\ref{YI2}$).
Fix $j_1$.
 We estimate $S_{1,\psi}(b_{1,j_1},g_2)(x)$ for $\dist \left(x,c(Q_{1,j_1}) \right)> 64n\ell(Q_{1,j_1})$. We have
\begin{align*}
\lefteqn{
\Big| \frac{1}{t^{2n}} \int_{\Rt} \psi \Big(
\frac{x+z}{t}, \frac{y_1}{t}, \frac{y_2}{t} \Big) b_{1,j_1}(y_1)g_2(y_2) d\vec{y} \Big|
}\\
&=\Big| \frac{1}{t^{2n}} \int_{\Rt} \left(
 \psi \Big(\frac{x+z}{t}, \frac{y_1}{t}, \frac{y_2}{t} \Big)-\psi \Big(\frac{x+z}{t}, \frac{c(Q_{1,j_1})}{t}, \frac{y_2}{t} \Big)
\right) b_{1,j_1}(y_1)g_2(y_2) d\vec{y} \Big|.
\end{align*}
As in (\ref{eq:220402-1}), 
we have $t> 32n \ell(Q_{1,j_1})> \ell(Q_{1,j_1})$. Moreover, from $y_1 \in Q_{1,j_1} $, 
$|x-c(Q_{1,j_1})|> 64n\ell(Q_{1,j_1})$ and 
$(z,t) \in \Gamma_1(0)$, it follows that 
$$
2\left( t+ |x+z-y_1|\right)> t+ |x-y_1|+t-|z|> t+ |x-y_1|,
$$
and
\begin{align*}
2\left( t+ |x-y_1|\right)
&\geq 2t+ |x-c(Q_{1,j_1})|-|c(Q_{1,j_1})-y_1|\\ 
&\geq t+ |x-c(Q_{1,j_1})| +t- \frac{\sqrt{n} }{2} \ell(Q_{1,j_1})>t+ |x-c(Q_{1,j_1})|.
\end{align*}

We estimate 
\begin{align*}
\lefteqn{
\Big| \frac{1}{t^{2n}} \int_{\Rt} \psi \Big(
\frac{x+z}{t}, \frac{y_1}{t}, \frac{y_2}{t} \Big) b_{1,j_1}(y_1)g_2(y_2) d\vec{y} \Big|
}\\
&\lesssim \|g_2\|_{L^\infty} \int_{\Rt} \frac{1}{L^{2n}}w\left( \frac{t}{L} \right) |b_{1,j_1}(y_1)| d\vec{y} 
\\
 &\lesssim \rho^{\frac12} 
 \frac{1}{\left(t+|x-c(Q_{1,j_1})|\right)^{n}}w\left( \frac{4t}{t+|x-c(Q_{1,j_1})|} \right)\\
 &\quad \times
 \int_{\Rn} |b_{1,j_1}(y_1)| \left( \int_{\Rn} \Big(1+ \frac{|x+z-y_2|}{t+|x+z-y_1|} \Big)^{-2n} dy_2 \right) dy_1
\\
 &\lesssim \rho^{\frac12}\|b_{1,j_1}\|_{L^1}
 \frac{1}{\left(t+|x-c(Q_{1,j_1})|\right)^{n}}w\left( \frac{4t}{t+|x-c(Q_{1,j_1})|} \right).
\end{align*}

Hence, as in the linear case, 
\begin{align*}
\left(\iint_{\Gamma_1(0), |x+z-c(Q_{1,j_1})|<32n \ell(Q_{1,j_1}) } \Big| 
\int_{\Rt} \psi \Big(
\frac{x+z}{t}, \frac{y_1}{t}, \frac{y_2}{t} \Big) b_{1,j_1}(y_1)g_2(y_2) d\vec{y} \Big|^2 \frac{dz dt}{t^{3n+1}} \right)^{\frac12}\\
\lesssim \rho^{\frac12} \|b_{1,j_1}\|_{L^1} 
\sum_{k=1}^{\infty} \frac{1}{2^{\frac{kn}{2}}\Big( 2^k \ell(Q_{1,j_1})+ |x-c(Q_{1,j_1})| \Big)^n} 
w \left( \frac{2^{k+2} \ell(Q_{1,j_1})}{2^k \ell(Q_{1,j_1})+ |x-c(Q_{1,j_1})|} \right).
\end{align*}

Let $(z,t) \in \Gamma_1(0)$
satisfy $|x+z-c(Q_{1,j_1})|\geq 32n \ell(Q_{1,j_1})$. 
Also let $y_1 \in Q_{1,j_1} $ and assume $|x-c(Q_{1,j_1})|> 64n\ell(Q_{1,j_1})$. Then as before
$$
4\left( t+ |x+z-y_1|\right)> 2\left( t+ |x-y_1|\right) >t+|x-c(Q_{1,j_1})|.
$$
Moreover, 
\begin{align*}
|x+z-y_1|> |x+z-c(Q_{1,j_1})|-|y_1-c(Q_{1,j_1})| 
&> 32n \ell(Q_{1,j_1})-\frac{\sqrt{n} }{2} \ell(Q_{1,j_1}) \\
&\ge n \ell(Q_{1,j_1})> 2 \sqrt{n}|y_1-c(Q_{1,j_1})|.
\end{align*}
Therefore,
abbreviating
$t+|x+z-y_1|+|x+z-y_2|$
to
$L$
and 
using the above estimates and the smoothness condition for $\psi$, 
we have
\begin{align*}
\lefteqn{
\Big| \frac{1}{t^{2n}} \int_{\Rt} \psi \Big(
\frac{x+z}{t}, \frac{y_1}{t}, \frac{y_2}{t} \Big) b_{1,j_1}(y_1)g_2(y_2) d\vec{y} \Big|
}\\
&=\Big| \frac{1}{t^{2n}} \int_{\Rt} \left(
 \psi \Big(\frac{x+z}{t}, \frac{y_1}{t}, \frac{y_2}{t} \Big)-\psi \Big(\frac{x+z}{t}, \frac{c(Q_{1,j_1})}{t}, \frac{y_2}{t} \Big)
\right) b_{1,j_1}(y_1)g_2(y_2) d\vec{y} \Big|\\
&\lesssim \|g_2\|_{L^\infty}
\int_{\Rt} \frac{1}{L^{2n}}w\left( \frac{t}{L} \right)\varphi \left( \frac{|y_1-c(Q_{1,j_1})|}{L} 
 \right) |b_{1,j_1}(y_1)| d\vec{y} \\
 &\lesssim \|g_2\|_{L^\infty} w \left( \frac{4t}{t+|x-c(Q_{1,j_1})|} \right) \int_{\Rt}\frac{1}{L^{2n}} \varphi \left( \frac{2 \sqrt{n} \ell(Q_{1,j_1}) }{|x-c(Q_{1,j_1})|} \right) |b_{1,j_1}(y_1)| d\vec{y}.
 \end{align*}
 By a change of variables we obtain
 \begin{align*}
 \lefteqn{\Big| \frac{1}{t^{2n}} \int_{\Rt} \psi \Big(
\frac{x+z}{t}, \frac{y_1}{t}, \frac{y_2}{t} \Big) b_{1,j_1}(y_1)g_2(y_2) d\vec{y} \Big|
}\\
 &\lesssim \rho^{\frac12} \int_{\Rn} \frac{dy_2}{ \left( 1+|y_2| \right)^{2n} } \int_{\Rn} \frac{w \left( \frac{4t}{t+|x-c(Q_{1,j_1})|} \right) }{ ( 
t+|x+z-y_1| )^{n}} \varphi \left( \frac{2 \sqrt{n} \ell(Q_{1,j_1}) }{|x-c(Q_{1,j_1})|} \right)
 |b_{1,j_1}(y_1)| dy_1 \\
 &\sim \rho^{\frac12} \frac{1}{ \left( t+|x-c(Q_{1,j_1})| \right)^n} w \left( \frac{4t}{t+|x-c(Q_{1,j_1})|} \right) \varphi \left( \frac{2 \sqrt{n} \ell(Q_{1,j_1}) }{|x-c(Q_{1,j_1})|} \right) \|b_{1,j_1}\|_{L^1}.
 \end{align*}
 Hence, as in the linear case,
 \begin{eqnarray*}
&&\left(\int_{z \in \Gamma_1(0), |x+z-c(Q_{1,j_1})|\geq 32n \ell(Q_{1,j_1}) } \Big| \frac{1}{t^{2n}} \int_{\Rt} \psi \Big(
\frac{x+z}{t}, \frac{y_1}{t}, \frac{y_2}{t} \Big) b_{1,j_1}(y_1)g_2(y_2) d\vec{y} \Big|^2 \frac{dz dt}{t^{n+1}} \right)^{\frac12}\\
&&\lesssim \rho^{\frac{1}{2}}[{w}]_{\rm{Dini}} \frac{1}{|x-c(Q_{1,j_1})|^n} 
\varphi \left( \frac{2 \sqrt{n} \ell(Q_{1,j_1}) }{|x-c(Q_{1,j_1})|} \right) 
\|b_{1,j_1}\|_{L^1}. 
\end{eqnarray*} 
 Thus as in the linear case, we obtain
 $$
 \int_{\Rn \setminus \Omega} S_{1,\psi}(b_{1,j_1},g_2)(x) dx 
\lesssim \rho^{\frac12} [{w}]_{\rm{Dini}} [{\varphi}]_{\rm{Dini}} 
\|b_{1,j_1}\|_{L^1}
 $$
 and hence adding this estimate over $j$
 $$
 \int_{\Rn \setminus \Omega} S_{1,\psi}(b_1,g_2)(x) dx \lesssim \rho^{\frac12} 
[{w}]_{\rm{Dini}} [{\varphi}]_{\rm{Dini}}.
 $$
 From this and Chebychev's inequality, it follows 
 \begin{align*}
 \left|\left\{x \in \Rn \setminus \Omega: S_{1,\psi}(b_1,g_2)(x)>\rho\right\}\right| 
&\leq \frac{1}{\rho} \int_{\Rn \setminus \Omega} S_{1,\psi}(b_1,g_2)(x) dx 
\lesssim \frac{1}{\rho^{\frac{1}{2}}} 
[{w}]_{\rm{Dini}} [{\varphi}]_{\rm{Dini}}. 
 \end{align*} 
 
 This proves ($\ref{YI1}$). 
\end{proof}
\begin{lemma}\label{lem:220330-103}
The bad-bad estimate
\begin{equation}\label{YI3}
 \left|\left\{x \in \Rn \setminus \Omega: S_{1,\psi}(b_1,b_2)(x)>\rho
 \right\}\right| 
 \lesssim \rho^{-\frac12} 
([\sqrt{w}]_{\rm{Dini}}
+[\sqrt{w}]^{1/2}_{\rm{Dini}}[\sqrt{\varphi}]_{\rm{Dini}})
\end{equation}
holds.
\end{lemma}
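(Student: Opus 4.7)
The plan is to apply Chebyshev's inequality at the level $p=1/2$, which gives
\[
|\{x\in\Rn\setminus\Omega\,:\,S_{1,\psi}(b_1,b_2)(x)>\rho\}|\le \rho^{-1/2}\int_{\Rn\setminus\Omega}S_{1,\psi}(b_1,b_2)(x)^{1/2}\,dx,
\]
so it suffices to control the right-hand integral by $[\sqrt w]_{\rm Dini}+[\sqrt w]^{1/2}_{\rm Dini}[\sqrt\varphi]_{\rm Dini}$. Writing $b_i=\sum_{j_i}b_{i,j_i}$ and applying Minkowski's inequality on $L^2(\Gamma_1(0),dz\,dt/t^{n+1})$ together with the subadditivity $(\sum a_i)^{1/2}\le\sum a_i^{1/2}$ yields
\[
S_{1,\psi}(b_1,b_2)^{1/2}\le\sum_{j_1,j_2}S_{1,\psi}(b_{1,j_1},b_{2,j_2})^{1/2},
\]
reducing matters to a single-cube-pair estimate.

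For each pair $(j_1,j_2)$ and for $x$ outside $64nQ_{1,j_1}\cup 64nQ_{2,j_2}$, I would revisit the four-case analysis of Lemma \ref{lem:220406-1111}, but now applied to the squared integrand $I_{b_{1,j_1},b_{2,j_2}}(x,z,t)^{2}$ (which is what defines $S_{1,\psi}^{2}$). The size condition handles case (a), while in cases (b), (c), (d) one exploits the vanishing moments (\ref{eq:220407-11111}) of $b_{1,j_1}$ and/or $b_{2,j_2}$ together with the smoothness conditions, exactly as in the proof of Lemma \ref{lem:220406-1111}. The squaring replaces the kernel factors $w$ and $\varphi$ by $w^2$ and $\varphi^2$, and after integrating out $(z,t)\in\Gamma_1(0)$ and taking the square root one arrives at pointwise bounds of the form
\[
S_{1,\psi}(b_{1,j_1},b_{2,j_2})(x)\lesssim \prod_{k=1}^{2}\bigl(\widetilde A^{(k)}(x)+\widetilde B^{(k)}(x)\bigr)\|b_{1,j_1}\|_{L^1}\|b_{2,j_2}\|_{L^1},
\]
where $\widetilde A^{(k)},\widetilde B^{(k)}$ are the natural counterparts of $A^{(k)},B^{(k)}$ built from $w^{2}$ and $\varphi^{2}$. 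A second square root, needed to pass from $S_{1,\psi}$ to $S_{1,\psi}^{1/2}$, then produces the $\sqrt w$ and $\sqrt\varphi$ factors.

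Integrating these $\sqrt w,\sqrt\varphi$-bounds against $x\in\Rn\setminus\Omega$ via the obvious analogues of Lemma \ref{lem:210801-1} and Corollary \ref{cor:220330-1} with $\sqrt w,\sqrt\varphi$ in place of $w,\varphi$ produces the Dini constants $[\sqrt w]_{\rm Dini}$ and $[\sqrt\varphi]_{\rm Dini}$; the half-power $[\sqrt w]^{1/2}_{\rm Dini}$ arises from the mixed cases in which only one of the smoothness conditions is used, so only half of the $w^{2}$-integration is converted into $\sqrt w$ by the fourth root. Finally, summing over $j_1,j_2$ by Cauchy--Schwarz, using $\|b_{i,j_i}\|_{L^1}\lesssim\|f_i\|_{L^1(Q_{i,j_i})}$ together with the normalization $\|f_i\|_{L^1}=1$ and the packing estimate $|\bigcup_{j_i}Q_{i,j_i}|\lesssim\rho^{-1/2}$ from Lemma \ref{lem:Omega}(1), yields precisely the $\rho^{-1/2}$ factor once combined with the Chebyshev step. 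The main obstacle is the bookkeeping needed to adapt the four-case pointwise analysis of Lemma \ref{lem:220406-1111} from the $L^1$-integral defining $\widetilde S_{1,\psi}$ to the $L^2$-integral defining $S_{1,\psi}$, keeping track of which factor of $w$ or $\varphi$ is "spent" on producing decay in $x$ versus on producing the Dini constant; this is precisely what forces the slightly stronger Dini hypothesis on $\sqrt w$ and $\sqrt\varphi$ at the weak endpoint, as the remark preceding the lemma anticipates.
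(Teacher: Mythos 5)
Your proposal takes a genuinely different route from the paper. The paper never estimates the $L^2$-integral $S_{1,\psi}(b_1,b_2)$ directly. Instead it first writes
\[
S_{1,\psi}(b_1,b_2)(x)^2
=\iint_{\Gamma_1(0)} I_{b_1,b_2}(x,z,t)^2\,\frac{dz\,dt}{t^{n+1}}
\le\Bigl(\sup_{(z,t)\in\Gamma_1(0)}I_{b_1,b_2}(x,z,t)\Bigr)\cdot\widetilde S_{1,\psi}(b_1,b_2)(x),
\]
controls the supremum by Lemma \ref{Yabuta} (yielding $[w]_{\rm Dini}\,{\mathcal M}b_1(x)\,{\mathcal M}b_2(x)$), and controls $\widetilde S_{1,\psi}$ by applying Lemma \ref{lem:220406-1111} verbatim (since that lemma is already stated for the $L^1$-cone integral). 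The weak $(1,1)$ bound then drops out by splitting the level set $\{S_{1,\psi}(b_1,b_2)>\rho\}$ into the two events governed by Lemma \ref{lem:220330-111} and by the Chebyshev bound for $\widetilde S_{1,\psi}^{1/2}$. Your route discards the $\sup\times\widetilde S$ decomposition and Lemma \ref{Yabuta} altogether and re-derives, from scratch, a Lemma-\ref{lem:220406-1111}-type pointwise bound for the genuine $L^2$ square function $S_{1,\psi}(b_{1,j_1},b_{2,j_2})$. That is a legitimate alternative, and in fact it is somewhat more self-contained since it avoids both ${\mathcal M}b_1{\mathcal M}b_2$ and the auxiliary operator $\widetilde S_{1,\psi}$.

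The step you label ``bookkeeping'' is, however, the nontrivial one, and precisely what the paper's $\widetilde S$-trick is designed to circumvent. To arrive at a product bound of the form $S_{1,\psi}(b_{1,j_1},b_{2,j_2})(x)\lesssim\prod_{k}(\widetilde A^{(k)}(x)+\widetilde B^{(k)}(x))\|b_{1,j_1}\|_1\|b_{2,j_2}\|_1$ you must split the squared kernel into pieces depending on $Q_{1,j_1}$ and on $Q_{2,j_2}$ separately. For the $L^1$-integral this is done by a geometric-mean trick (bound once by the $Q_1$-decay, once by the $Q_2$-decay, multiply the two estimates and take square roots). For the $L^2$-integral you additionally need $L^{2n}\gtrsim(t+|x-c(Q_{1,j_1})|)^n(t+|x-c(Q_{2,j_2})|)^n$, the monotonicity $w(t/L)^2\le w\bigl(4t/(t+|x-c(Q_{1,j_1})|)\bigr)\,w\bigl(4t/(t+|x-c(Q_{2,j_2})|)\bigr)$, and a careful account of the $z$-measure (which contributes $\min(\ell(Q_{1,j_1}),\ell(Q_{2,j_2}),t)^n$, to be split as $\sqrt{|Q_{1,j_1}||Q_{2,j_2}|}$) before a Cauchy--Schwarz or geometric-mean in $t$ produces the claimed $\widetilde A^{(k)},\widetilde B^{(k)}$, now defined via \emph{fourth} roots. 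None of this is stated in your sketch, and the slogan ``squaring replaces $w,\varphi$ by $w^2,\varphi^2$'' elides it. If you carry these steps out you do recover an estimate of the announced form (with $[\sqrt w]_{\rm Dini}$ coming from the $\widetilde B$-integrals and a factor such as $[w]^{1/2}_{\rm Dini}[\sqrt\varphi]_{\rm Dini}\le[\sqrt w]_{\rm Dini}[\sqrt\varphi]_{\rm Dini}$ from the $\widetilde A$-integrals), and the final Cauchy--Schwarz-in-$x$ and Chebyshev steps go through as you describe. But as written the proposal asserts the key pointwise bound rather than proving it, so the essential content of the lemma is still missing.
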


For the proof, we employ the following notation:
Let $x \in \Rn \setminus \Omega$
and $(z,t) \in \Gamma_1(0)$. 
Also, let $k=1,2$.
\begin{itemize}
\item
We write 
$$
I_{j_1,j_2}(x,z,t)=\Big| \frac{1}{t^{2n}} \int_{\Rt} \psi \Big(
\frac{x+z}{t}, \frac{y_1}{t}, \frac{y_2}{t} \Big) b_{1,j_1}(y_1)b_{2,j_2}(y_2) 
d\vec{y} \Big|.
$$
\item
Let
\begin{align*}
A^{(k)}_j(x)&:=
\sqrt{
\frac{[\sqrt{w}]_{\rm Dini}}{|x-c(Q_{k,j})|^{2n}}
\varphi\left(\frac{4n\ell(Q_{k,j})}{|x-c(Q_{k,j})|}\right)}\\
B^{(k)}_{j}(x)&:=
\sqrt{
\int_{\ell(Q_{k,j})}^\infty
\frac{|Q_{k,j}|}{(t+|x-c(Q_{k,j})|)^{2n}}
w\left(\frac{t}{t+|x-c(Q_{k,j})|}\right)\frac{dt}{t^{n+1}}
}
\end{align*}

for each $j$.
\item
Also, define
\[
C^{(k)}(x):=\sum_{j} \|b_{k,j}\|_{L^1}(A^{(k)}_j(x)+B^{(k)}_j(x)).
\]
Note that
\[
\|C^{(k)}\|_{L^1(\mathbb R^n\setminus \Omega)} 
\lesssim ({[\sqrt{w}]_{\rm Dini}}
+
{[\sqrt{w}]}^{1/2}_{\rm Dini}[\sqrt{\varphi}]_{\rm Dini})
\sum_{j} \|b_{k,j}\|_{L^1}.
\]
\item
Let
$L:=t+ |x+z-y_1|+|x+z-y_2|$.
See (\ref{eq:220506-1}).
\end{itemize}
\begin{proof}
Let $x \in {\mathbb R}^n \setminus \Omega$.
As for $S_{1,\psi}(b_1,b_2)$, we apply Lemma \ref{Yabuta}. 
By
(\ref{eq:220330-205}) and Lemma \ref{Yabuta},
we have 
 \begin{align*} 
 \lefteqn{
S_{1,\psi}(b_1,b_2)(x)
}\\
&\leq 
\sup_{(z,t) \in \Gamma_1(0)} 
\sqrt{\frac{1}{t^{2n}} \int_{\Rt} \Big| \psi \Big(
\frac{x+z}{t}, \frac{y_1}{t}, \frac{y_2}{t} \Big) b_{1}(y_1)b_2(y_2) \Big| 
d\vec{y}} \sqrt{\widetilde{S}_{1,\psi}(b_1,b_2)(x)} 
\\
&\lesssim \sqrt{[{w}]_{\rm{Dini}} }
\sqrt{{\mathcal M}b_1(x) {\mathcal M}b_2(x)} 
\sqrt{\widetilde{S}_{1,\psi}(b_1,b_2)(x)}.
 \end{align*}

By the triangle inequality, we have
\begin{align*}
\widetilde{S}_{1,\psi}(b_1,b_2)(x)
\le
\sum_{j_1} \sum_{j_2} \widetilde{S}_{1,\psi}(b_{1,j_1}, b_{2,j_2})(x).
\end{align*}
Summing 
Lemma \ref{lem:220406-1111}
over $j_1$ and $j_2$, we get
\begin{align*}
\widetilde{S}_{1,\psi}(b_1,b_2)(x)\lesssim
\sum_{j_1,j_2}
\|b_{1,j_1}\|_{L^1} \|b_{2,j_2}\|_{L^1}
\prod_{k=1}^2
(A_{j_k}^{(k)}(x)+B_{j_k}^{(k)}(x))=C^{(1)}(x)C^{(2)}(x)
\end{align*}
for all $x \in \Omega$.

Then by the Cauchy--Schwarz inequality,
\begin{align*}
\int_{ \Rn \setminus \Omega}\widetilde{S}_{1,\psi}(b_1,b_2)(x)^{\frac12} dx 
&\lesssim
\prod_{k=1}^2\left(\int_{ \Rn \setminus \Omega} C^{(k)}(x) dx \right)^{\frac12} 
\\
&\lesssim ([\sqrt{w}]_{\rm{Dini}}+[\sqrt{w}]^{\frac12}_{\rm{Dini}}[\sqrt{\varphi}]_{\rm{Dini}})
\prod_{k=1}^2
\|b_k\|^{\frac12}_{L^1(\Rn \setminus \Omega)}
\\
&\lesssim ([\sqrt{w}]_{\rm{Dini}}
+[\sqrt{w}]^{\frac12}_{\rm{Dini}}[\sqrt{\varphi}]_{\rm{Dini}}).
\end{align*}
Therefore,
by the Chebychev inequality, 
\begin{equation}\label{eq:220402-1113}
\left|\left\{x \in \Rn \setminus \Omega: \widetilde{S}_{1,\psi}(b_1,b_2)(x)>\rho\right\}\right| 
\lesssim \rho^{-\frac12}
([\sqrt{w}]_{\rm{Dini}}
+[\sqrt{w}]^{\frac12}_{\rm{Dini}}[\sqrt{\varphi}]_{\rm{Dini}}),
\end{equation}
which means that (\ref{eq:220402-1113}),
or equivalently, ($\ref{YI3}$) holds. 
Here we have used the fact $[{w}]_{\rm{Dini}}\le[\sqrt{w}]^{2}_{\rm{Dini}}$. 

Now, set 
\begin{equation*}
\tilde{w}=\frac{w}{[\sqrt{w}]^2_{\rm Dini}},\quad 
\tilde \varphi = \frac{\varphi}{[\sqrt{\varphi}]^2_{\rm Dini}},\ \text{ and }\ 
\tilde A=A[\sqrt{w}]^2_{\rm Dini}(1+[\sqrt{\varphi}]^2_{\rm Dini}).
\end{equation*}
Then the size condition and the smoothness condition for $\psi$ 
in the definition of $S_{\alpha,\psi}$ are satisfied with $\tilde A$, 
$\tilde{w}$ and $\tilde{\varphi}$ in place of $A$, ${w}$ and ${\varphi}$, 
respectively. 
In this case, $[\sqrt{\tilde{w}}]_{\rm Dini}\le 1$ and 
$[\sqrt{\tilde\varphi}]_{\rm Dini}< 1$. 
Thus, by Lemmas \ref{lem:Omega}, \ref{lem:220330-111}, \ref{lem:220330-101}, 
and \ref{lem:220330-103}, we see that 
\begin{equation*}
\rho^{\frac{1}{2}}
\left|\left\{x \in \Rn : \widetilde{S}_{1,\psi}(f_1,f_2)(x)>\rho\right\}\right| 
\lesssim
(A[\sqrt{w}]_{\rm Dini}(1+[\sqrt{\varphi}]_{\rm Dini})
+1+\|S_{1,\psi}\|_{L^{p_1}\times L^{p_2}\to L^p})\|f_1\|_{L^1}\|f_2\|_{L^1}. 
\end{equation*}
A homogeneity argument yields
\begin{equation*}
\rho^{\frac{1}{2}}
\left|\left\{x \in \Rn : \widetilde{S}_{1,\psi}(f_1,f_2)(x)>\rho\right\}\right| 
\lesssim
(A[\sqrt{w}]_{\rm Dini}(1+[\sqrt{\varphi}]_{\rm Dini})
+\|S_{1,\psi}\|_{L^{p_1}\times L^{p_2}\to L^p})\|f_1\|_{L^1}\|f_2\|_{L^1}.
\end{equation*}
The proof of Theorem \ref{Thm-multi-w} in the case $m=2$
is therefore complete. 
\end{proof}

\subsection{Sparse domination for multilinear Littlewood--Paley operators}\label{s33}

We pass to the multilinear case.
Once again we work in the setting of the $m$-linear case
$m \ge 2$.
\begin{theorem}\label{domiTheorem2}Let
$\alpha \ge 1$,
$Q_0 \in {\mathcal D}$.
Suppose that we have
integrable functions
$f_1,f_2,\ldots,f_n$
supported in $3Q_0$.
Then there exists a sparse family $\S \subset {\mathcal D}(Q_0)$ 
$($depending on $\vec{f}$$)$ 
such that 
\begin{align}\label{Sparse-main}
S_{\alpha,\psi} \vec{f} \cdot 1_{Q_0}
&\lesssim \alpha^{nm}
\log^{\frac12+m}(2+\alpha) 
[{w}]_{\rm Dini}(1+[{\varphi}]_{\rm Dini}) 
\bigg[\sum\limits_{P \in \S} 
\prod_{i=1}^{m} \langle f_i \rangle_{1,P}^2 1_P\bigg]^{\frac12}.
\end{align}
\end{theorem}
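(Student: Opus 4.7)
The plan is to follow the linear skeleton in Section \ref{sparse-lastpart} essentially verbatim, with all operators recast in their $m$-linear form. First I would introduce the multilinear analogues of the auxiliary maximal functions of Section \ref{secweak2}:
\[
\mathcal{M}_{S_{\alpha,\psi}}\vec{f}(x)
:=\sup_{Q}1_Q(x)\sqrt{|S_{\alpha,\psi}\vec{f}(x)^2-S_{\alpha,\psi}(\vec{f}\cdot 1_{3Q})(x)^2|},
\]
\[
\mathcal{N}_{S_{\alpha,\psi}}\vec{f}(x)
:=\sup_{Q}1_Q(x)\,S_{\alpha,\psi}(\vec{f}\cdot 1_{\mathbb{R}^n\setminus 3Q})(x),
\]
where $\vec{f}\cdot 1_E:=(f_1 1_E,\dots,f_m 1_E)$. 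I would then repeat the triangle-inequality computation used in the proof of Lemma \ref{weakestiamte}, but with the multilinear smoothness condition on $\psi$ applied in the first variable and with Lemma \ref{Yabuta} replacing Lemma \ref{auxiliary}(b)–(c). This yields the pointwise bound
\[
\mathcal{N}_{S_{\alpha,\psi}}\vec{f}(x)
\lesssim \alpha^{nm}\log^{\frac12+m}(2+\alpha)\,[w]_{\rm Dini}[\varphi]_{\rm Dini}\prod_{i=1}^m\mathcal{M}f_i(x)
+\mathcal{M}_{1/(2m)}\!\circ\! S_{\alpha,\psi}\vec{f}(x),
\]
where the power $\frac12+m$ emerges because Lemma \ref{Yabuta} contributes one factor of $\log(2+\alpha)$ per variable. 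Combining this with the multilinear Kolmogorov inequality, Theorem \ref{Thm-multi-w}, and the elementary identity $|a^2-b^2|\le |a-b|^2+2|a-b||a|$ yields
\[
\bigl\|\mathcal{N}_{S_{\alpha,\psi}}\bigr\|_{L^1\times\cdots\times L^1\to L^{1/m,\infty}}
+\bigl\|\mathcal{M}_{S_{\alpha,\psi}}\bigr\|_{L^1\times\cdots\times L^1\to L^{1/m,\infty}}\lesssim K,
\]
with $K:=\alpha^{nm}\log^{\frac12+m}(2+\alpha)\bigl([w]_{\rm Dini}(1+[\varphi]_{\rm Dini})+\|S_{1,\psi}\|_{L^{p_1}\times\cdots\times L^{p_m}\to L^p}\bigr)$.

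Next, setting $\widetilde{\mathcal M}_{S_{\alpha,\psi,Q_0}}\vec{f}:=\max\{S_{\alpha,\psi}\vec{f},\mathcal{M}_{S_{\alpha,\psi}}(\vec{f}\cdot 1_{3Q_0})\}$ I would define the bad set
\[
E:=\Bigl\{x\in Q_0:\widetilde{\mathcal M}_{S_{\alpha,\psi,Q_0}}(\vec{f}\cdot 1_{3Q_0})(x)>\gamma^{1/2}K\prod_{i=1}^m\langle f_i\rangle_{1,3Q_0}\Bigr\}.
\]
The weak $L^{1/m,\infty}$-bound together with the normalization $\lambda|\{|T\vec{g}|>\lambda\}|^m\le K\prod\|g_i\|_{L^1}$ gives $|E|\le 3^n\gamma^{-1/(2m)}|Q_0|$, so choosing $\gamma$ comparable to $(3^n 2^{n+1})^{2m}$ (a purely dimensional quantity) forces $|E|\le 2^{-n-1}|Q_0|$. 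From there the stopping-time argument reproduced on pp.~\pageref{sparse-lastpart} of the linear case produces a pairwise disjoint family $\{P_j\}\subset \mathcal{D}(Q_0)$ with the standard maximality property; the nonlocal error on $E$ is controlled through the defining bound of $\mathcal{M}_{S_{\alpha,\psi}}$ applied at points of $\hat P_j\cap E^{\rm c}\neq\emptyset$, yielding the one-step decomposition
\[
S_{\alpha,\psi}^2\vec{f}\cdot 1_{Q_0}
\le \gamma K^2\prod_{i=1}^m\langle f_i\rangle_{1,3Q_0}^{\,2}\cdot 1_{Q_0}
+\sum_j S_{\alpha,\psi}^2(\vec{f}\cdot 1_{3\hat P_j})\cdot 1_{\hat P_j}.
\]
Iterating this inequality over generations of stopping cubes produces the sparse family $\mathcal{S}$ together with (\ref{Sparse-main}); the sparseness constant $\tfrac12$ follows from property (2) of the stopping cubes just as in the linear argument.

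The principal obstacle lies in Step~2, i.e.\ verifying weak-$L^{1/m,\infty}$ boundedness of $\mathcal{M}_{S_{\alpha,\psi}}$ and $\mathcal{N}_{S_{\alpha,\psi}}$ while keeping precise control of the constant in $K$. The multilinear version of the telescoping estimate $|S_{\alpha,\psi}(\vec{f}\cdot 1_{\mathbb R^n\setminus 3Q})(x)-S_{\alpha,\psi}(\vec{f}\cdot 1_{\mathbb R^n\setminus 3Q})(x')|$ requires applying the smoothness condition in the first variable and then, for the remaining integrals in $y_1,\ldots,y_m$, repeatedly using the geometric inequality (\ref{eq:211114-2}) to trade $|x+y-z|$ for $|x-z|$ at the cost of a factor of $\alpha^{nm}$. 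Each of the $m$ integrals then contributes one factor of $\log(2+\alpha)[w]_{\rm Dini}$ via Lemma \ref{Yabuta}, which is what produces the $\log^{\frac12+m}(2+\alpha)$ weight and forces us to assume the Dini condition on $\sqrt[m]{w}$ and $\sqrt[m]{\varphi}$ at the weak-endpoint level (cf.\ Lemma \ref{lem:220330-103}). The remainder of the argument—stopping time, sparseness, iteration—is mechanical once this endpoint bound is secured.
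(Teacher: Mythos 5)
Your proposal takes essentially the same route as the paper: introduce the multilinear maximal functions $\mathcal{M}_{S_{\alpha,\psi}}$ and $\mathcal{N}_{S_{\alpha,\psi}}$, control the telescoping difference using the smoothness condition in the first variable, apply Kolmogorov's inequality together with the multilinear weak endpoint bound, and then iterate the stopping--time decomposition exactly as in Section \ref{sparse-lastpart}. However, there is one misidentification worth flagging. You claim the $\log^{\frac12+m}(2+\alpha)$ arises from applying Lemma \ref{Yabuta} "one factor of $\log(2+\alpha)$ per variable,'' but Lemma \ref{Yabuta} is a \emph{pointwise supremum} bound over $(z,t)\in\Gamma_\alpha(x)$ that integrates all $m$ variables simultaneously and produces a single factor $\log(2+\alpha)$; it gives no control of the $L^2\bigl(\frac{dz\,dt}{t^{n+1}},\Gamma_\alpha(0)\bigr)$ integral that appears after the triangle inequality in the telescoping step (it is used in the paper only for the bad--bad estimate in Lemma \ref{lem:220330-103}). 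What the paper's proof actually does---mirroring the linear Lemma \ref{weakestiamte} verbatim---is apply Minkowski's inequality to pull the $\vec{y}$-integral outside, invoke Lemma \ref{auxiliary}(b) to handle the remaining $t$-integral (this gives $\log^{1/2}(2+\alpha)[w]_{\rm Dini}$), and then use the $m$-fold dyadic decomposition of $(\mathbb{R}^n\setminus 3Q)^m$ together with Lemma \ref{auxiliary}(c) applied once per variable $y_i$ to produce the remaining $\log^{m}(2+\alpha)[\varphi]_{\rm Dini}$. This is an easy fix and your overall architecture---definition of $E$, weak $L^{1/m,\infty}$ normalization $\lambda|E|^m\lesssim K\prod\|f_i\,1_{3Q_0}\|_{L^1}$, the choice of $\gamma$ of dimensional size to guarantee $|E|\le 2^{-n-1}|Q_0|$, and the one-step decomposition $S_{\alpha,\psi}^2\vec{f}\cdot1_{Q_0}\le\gamma K^2\prod\langle f_i\rangle_{1,3Q_0}^2 1_{Q_0}+\sum_j S_{\alpha,\psi}^2(\vec{f}\cdot1_{3\hat P_j})1_{\hat P_j}$---coincides with the paper's; it is only the mechanism producing the logarithmic power that needs to be corrected.
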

\medskip

\begin{proof} The proof is almost identical to the linear case, so we only 
sketch the proof.
Let $x \in \Rn$. Consider
\[
{\mathcal M}_{S_{\alpha,\psi}}\vec{f}(x)
:=
\sup_{Q}1_Q(x)
\sqrt{|S^2_{\alpha,\psi}\vec{f}(x)-S_{\alpha,\psi}^2(\vec{f} \cdot 1_{3Q})(x)|}.
\]
and
\[
{\mathcal N}_{S_{\alpha,\psi}}\vec{f}(x)
:=
\sup_{Q}1_Q(x)
S_{\alpha,\psi}(\vec{f} \cdot 1_{{\mathbb R}^n \setminus 3Q})(x),
\]
where $Q$ moves over all cubes. 
Fix a cube $Q$ so that $x \in Q$.
Let $x' \in Q$ be arbitrary. Keeping in mind that
\[
|x+z-y_i|+(2+\alpha)t \ge
|x-y_i|+(2+\alpha)t-|z|\ge
|x-y_i|+t
\]
for all $(z,t) \in \Gamma_\alpha(0)$ and $y_i \in {\mathbb R}^n$. We get
\begin{align*}
{\rm I}&:=
|
S_{\alpha,\psi}(\vec{f} \cdot 1_{{\mathbb R}^n \setminus 3Q})(x)
-
S_{\alpha,\psi}(\vec{f} \cdot 1_{{\mathbb R}^n \setminus 3Q})(x')
|\\
 &\le
\left(
\iint_{\Gamma_\alpha(0)}
|\psi_t(\vec{f} \cdot 1_{{\mathbb R}^n \setminus 3Q})(x+z)
-\psi_t(\vec{f} \cdot 1_{{\mathbb R}^n \setminus 3Q})(x'+z)|^2
\frac{dzdt}{t^{n+1}}
\right)^{\frac12}\\
&\lesssim\Big[ \int_{(\Rn \setminus 3Q)^m} \int_0^\infty\\
& \ \ \ \times 
\left\{
\frac{\alpha^{nm}\prod_{i=1}^{m}|f_i(y_i)|}{(t+\sum_{i=1}^{m}|x-y_i|)^{nm}}
\varphi\left(\frac{\sqrt{n}(2+\alpha)\ell(Q)}{t+\sum_{i=1}^{m}|x-y_i|}\right)
w\left(\frac{(2+\alpha) t}{t+\sum_{i=1}^{m}|x-y_i|}\right)d\vec{y}\right\}^2 
\frac{dt}{t}\Big]^{\frac12}
\\
& {\leq} \alpha^{nm} \log^{\frac12} (2+\alpha) [{w}]_{\rm{Dini}} 
\int_{(\Rn \setminus 3Q)^m} 
\frac{1}{(\sum_{i=1}^{m}|x-y_i|)^{nm}}
\varphi\left(\frac{\sqrt{n}(2+\alpha)\ell(Q)}{\sum_{i=1}^{m}|x-y_i|}\right)
 \prod_{i=1}^{m}|f_i(y_i)|d\vec{y}.
\end{align*}
Here we have used Lemma \ref{auxiliary} (b) in the last step.
By Lemma \ref{auxiliary} (c),
\begin{align*}
{\rm I} &\lesssim \alpha^{nm} \log^{\frac12} (2+\alpha) [{w}]_{\rm{Dini}} 
\sum_{k=1}^{\infty}
{\varphi}\left(\frac{\sqrt{n}(2+\alpha)}{2^k}\right)
 \prod_{i=1}^{m} {\mathcal M}f_i(x)
 \\
 & {\lesssim} \alpha^{nm} \log^{\frac12 + m} (2+\alpha) [{w}]_{\rm{Dini}} 
 [{\varphi}]_{\rm{Dini}} \prod_{i=1}^{m} {\mathcal M}f_i(x).
\end{align*}
As a result, 
\begin{align*}
\lefteqn{
S_{\alpha,\psi}(\vec{f} \cdot 1_{{\mathbb R}^n \setminus 3Q})(x)
}\\
&\lesssim \alpha^{nm} \log^{\frac12 + m} (2+\alpha) 
[{w}]_{\rm{Dini}} [{\varphi}]_{\rm{Dini}} 
\prod_{i=1}^{m} {\mathcal M}f_i(x)
+
S_{\alpha,\psi} \vec{f}(x')+
S_{\alpha,\psi}(\vec{f} \cdot 1_{3Q})(x').
\end{align*}

By Kolmogorov's inequality
(see Lemma \ref{lem:Kol})
$$
\int_{E} S_{\alpha,\psi}\vec{f}(x)^{\frac{1}{2m}}dx \lesssim
|E|^{\frac12} 
\Big( 
\alpha^{nm}\|S_{1,\psi}\|_{L^1 \times L^1 \times \cdots \times 
L^1 \to L^{1,\infty}}
 \prod_{i=1}^{m} \|f_i\|_{L^1} \Big)^{\frac{1}{2m}}.
$$
We write
$$
J:=S_{\alpha,\psi}(\vec{f} \cdot 
1_{{\mathbb R}^n \setminus 3Q})(x)^{\frac{1}{2m}}.
$$
Taking the average over $Q$ against $x'$
and using the Hardy--Littlewood maximal operator
${\mathcal M}$ twice, we obtain 
\begin{align*}
J 
&\lesssim \alpha^{\frac{n}{2}} \log^{ \frac12+\frac1{4m} } (2+\alpha) 
\left( [{w}]_{\rm{Dini}} [{\varphi}]_{\rm{Dini}} 
\prod_{i=1}^{m} {\mathcal M}f_i(x) \right)^{\frac{1}{2m}} \\
&\quad+
\alpha^{\frac{n}{2}}
\|S_{1,\psi}\|_{L^1 \times L^1 \times \cdots \times L^1 \to L^{1,\infty}}
{\mathcal M}_{\frac{1}{2m}} \circ S_{\alpha,\psi}\vec{f}(x).
\end{align*}

Since $Q$ is also arbitrary,
it follows that
\begin{align*}
{\mathcal N}_{S_{\alpha,\psi}}\vec{f}(x)
&\lesssim \alpha^{nm} \log^{ \frac12+m} (2+\alpha) 
\left( [{w}]_{\rm{Dini}} [{\varphi}]_{\rm{Dini}} 
\prod_{i=1}^{m} {\mathcal M}f_i(x) \right)^{\frac{1}{2m}} \\
&\quad+
\alpha^{nm}
\|S_{1,\psi}\|_{L^1 \times L^1 \times \cdots \times L^1 \to L^{1,\infty}}
{\mathcal M}_{\frac{1}{2m}} \circ S_{\alpha,\psi}\vec{f}(x).
\end{align*}
Since
${\mathcal M}_{\frac{1}{2m}}$ is bounded
on 
$L^{\frac1m,\infty}({\mathbb R}^n)$, ${\mathcal M}_{\frac{1}{2m}} 
\circ S_{\alpha,\psi}\vec{f}$ is bounded 
from $L^1(\Rn) \times \cdots \times L^1(\Rn) $ to $L^{\frac1m, \infty}(\Rn)$. 
So, there exists a constant $D>0$,
independent of $\alpha\ge1$, such that
\begin{eqnarray*}
&&\|{\mathcal N}_{S_{\alpha,\psi}}\|_{L^1 \times \cdots \times L^1 \to L^{\frac1m, \infty}}+
\|{\mathcal M}_{S_{\alpha,\psi}}\|_{L^1 \times \cdots \times L^1 \to L^{\frac1m, \infty}}\\
&&\leq D(\alpha^{nm}\log^{\frac12+m}(2+\alpha)
[{w}]_{\rm Dini}[{\varphi}]_{\rm Dini}+
\alpha^{nm}
\|S_{1,\psi}\|_{L^1 \times L^1 \times \cdots \times L^1 \to L^{1,\infty}}).
\end{eqnarray*}
As the last step of the proof, one can 
follow the idea in Section \ref{sparse-lastpart} very closely and use the above estimate. 
We omit further details.
\end{proof}

\section{Examples}
\label{s10.11}
Let $m=1$.
In this section, we 
exhibit some examples of $\psi$ and moduli of continuity $w$ and $\varphi$ 
for which
(\ref{eq-sizecondition}),
(\ref{eq-smoothcondition1})
and
(\ref{eq-smoothcondition2})
hold and $S_{\alpha,\psi}$ is $L^2$-bounded. 

For the sake of simplicity, we consider the operator $S_{1,\psi}$. 
Indeed, 
Lemma \ref{l2boundedness}
allows us to handle different apertures $\alpha>0$.

\subsection{Example 1.}
\label{s10.112}
We discuss how different the Dini condition and the log-Dini condition are.

Here we list a function
$\psi$ for which
(\ref{eq-sizecondition}),
(\ref{eq-smoothcondition1})
and
(\ref{eq-smoothcondition2})
hold.

In fact, letting $\kappa>1$,
we define
\[
\psi(x)=\psi(x_1,x_2,\ldots,x_n)
=
\frac{\sin x_1}{(1+|x|^2)^{\frac{n}{2}}\log^{\kappa}(2+|x|^2)}
\quad (x \in {\mathbb R}^n).
\]
We show that 
${\mathcal F}\psi$ decays rapidly at $\infty$ and $0$ for any $n$ if $\kappa>1$.
\begin{lemma}\label{lem:220404-1}
Let $l \in {\mathbb N}$.
Then
\[
|{\mathcal F}\psi(\xi)|
\lesssim \frac{1}{1+|\xi|^l}\log^{1-\kappa}\left(2+\frac{1}{|\xi|}\right)
\quad (\xi \in {\mathbb R}^n).
\]
\end{lemma}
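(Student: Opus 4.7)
The plan is to split the estimate into two regimes based on the size of $|\xi|$. Writing $\psi(x)=\sin(x_1)K(x)$ with
\[
K(x):=\frac{1}{(1+|x|^2)^{n/2}\log^{\kappa}(2+|x|^2)},
\]
the hypothesis $\kappa>1$ yields $K\in L^1(\mathbb{R}^n)$, via the radial integral $\int_1^\infty \frac{dr}{r\log^{\kappa}r}<\infty$. The claim then decomposes into polynomial decay for $|\xi|\ge 1$ and a logarithmic modulus near $\xi=0$.

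For $|\xi|\ge 1$, the log factor on the right-hand side of the claim is bounded, so it suffices to establish $|\mathcal{F}\psi(\xi)|\lesssim |\xi|^{-l}$. I would do this by integration by parts. A direct computation yields $|\partial^{\gamma}K(x)|\lesssim (1+|x|)^{-n-|\gamma|}\log^{-\kappa}(2+|x|)$, and since the derivatives of $\sin x_1$ are all bounded, $\partial^\alpha \psi\in L^1(\mathbb{R}^n)$ for every multi-index $\alpha$. Hence $|\xi^\alpha \mathcal{F}\psi(\xi)|=|\mathcal{F}(\partial^\alpha\psi)(\xi)|\lesssim 1$ for $|\alpha|=l$, which delivers the required decay.

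For $|\xi|\le 1$, I would exploit the vanishing $\mathcal{F}\psi(0)=0$, which holds because $K$ is radial while $\sin x_1$ is odd in $x_1$. Writing
\[
\mathcal{F}\psi(\xi)=(2\pi)^{-n/2}\int_{\mathbb{R}^n}\sin(x_1)K(x)(e^{-ix\cdot\xi}-1)dx
\]
and using $|\sin x_1|\le 1$, I would split the resulting integral at the scale $|x|=1/|\xi|$, employing $|e^{-ix\cdot\xi}-1|\le |x||\xi|$ on the inner piece and $|e^{-ix\cdot\xi}-1|\le 2$ on the outer. The key estimates, which trace back to the critical decay of $K$, are
\[
\int_{1<|x|<R}|x|K(x)\,dx\sim \frac{R}{\log^{\kappa}R},\qquad \int_{|x|>R}K(x)\,dx\sim \frac{1}{\log^{\kappa-1}R}\qquad (R\to\infty).
\]
Setting $R=1/|\xi|$, both contributions are of order $\log^{1-\kappa}(1/|\xi|)$, with the outer one dominating, which yields $|\mathcal{F}\psi(\xi)|\lesssim \log^{1-\kappa}(2+1/|\xi|)$.

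The main obstacle is the small-$|\xi|$ estimate, where one must pick the scale $R=1/|\xi|$ to balance the two pieces and thereby recover the exponent $1-\kappa$. The cancellation $\mathcal{F}\psi(0)=0$ is the only cancellation available, because $xK\notin L^1(\mathbb{R}^n)$; in particular, $\nabla \mathcal{F}K$ is not a classical function, so a higher-order Taylor expansion of $\mathcal{F}K$ around $\pm e_1$ cannot be invoked to extract faster decay. The logarithmic rate is dictated exactly by the $|x|^{-n}\log^{-\kappa}|x|$ Dini-borderline tail of $K$; any coarser split loses the correct exponent.
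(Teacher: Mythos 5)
Your proposal is correct and follows essentially the same route as the paper: polynomial decay for $|\xi|\ge 1$ via $\partial^\alpha\psi\in L^1$, and for $|\xi|<1$, exploiting the oddness of $\psi$ (the paper writes $|\sin(x\cdot\xi)|\le\min(1,|x||\xi|)$ where you write $|e^{-ix\cdot\xi}-1|\le\min(2,|x||\xi|)$, which is the same cancellation) and then splitting the resulting integral at the scale $|x|\sim 1/|\xi|$. The only cosmetic difference is that the paper passes to exponential coordinates $t=e^u$ and introduces an extra intermediate scale $1/\sqrt{|\xi|}$, whereas you bound the inner piece directly via the asymptotic $\int_1^R \frac{dr}{\log^\kappa r}\sim R/\log^\kappa R$; both lead to the same dominant contribution $\log^{1-\kappa}(2+1/|\xi|)$ from the tail $|x|>1/|\xi|$.
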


\begin{proof}
Since $\kappa>1$,
$\psi,\nabla^l \psi \in L^1(\Rn)$
for any $l \in {\mathbb N}$.
Thus, denoting by ${\mathcal F}$ the Fourier transform,
we have
\begin{equation}\label{eq:220404-1}
|{\mathcal F}\psi(\xi)| \lesssim \frac{1}{1+|\xi|^l}
\quad (\xi \in {\mathbb R}^n)
\end{equation}
for any $l \in {\mathbb N}$.

Let $|\xi|<1$.
We will seek a finer estimate than 
(\ref{eq:220404-1}) by paying attention to the expression
\[
{\mathcal F}\psi(\xi)
=
\frac{1}{(2\pi)^{\frac{n}{2}}}
\int_{{\mathbb R}^n}
\frac{\sin x_1}{(1+|x|^2)^{\frac{n}{2}}\log^{\kappa}(2+|x|^2)}
e^{-i x \cdot \xi}dx.
\]
Since $\psi$ is an odd function, we have
\[
|{\mathcal F}\psi(\xi)|
\lesssim
\int_{{\mathbb R}^n}
\frac{\min(1,|x|\cdot|\xi|)}{(1+|x|^2)^{\frac{n}{2}}\log^{\kappa}(2+|x|^2)}dx
\sim
\int_0^\infty \frac{t^{n-1}\min(1,t|\xi|)}{(1+t^2)^{\frac{n}{2}}\log^\kappa(2+t^2)}dt.
\]
We estimate this integral similar to Lemma \ref{auxiliary}.
We make a change of variables $t=e^u$
to have
\begin{align*}
|{\mathcal F}\psi(\xi)|
&\lesssim
\int_{-\infty}^\infty \frac{e^{n u}\min(1,e^u|\xi|)}
{(1+e^{2u})^{\frac{n}{2}}(\log(2+e^{2u}))^\kappa}du
\\
&= 
\int_{-\infty}^{\log\frac{1}{|\xi|}} 
\frac{e^{(n+1) u}|\xi|}{(1+e^{2u})^{\frac{n}{2}}\log^\kappa(2+e^{2u})}du
+
\int_{\log\frac{1}{|\xi|}}^\infty 
\frac{e^{n u}}{(1+e^{2u})^{\frac{n}{2}}\log^\kappa(2+e^{2u})}du.
\end{align*}
We decompose the integral into $4$ parts:
\begin{align*}
\lefteqn{
|{\mathcal F}\psi(\xi)|
}\\
&\lesssim 
|\xi|\biggl(\int_{-\infty}^{0} 
\frac{e^{(n+1) u}\,du}{(1+e^{u})^{n}}
+\int_{0}^{\log\frac{1}{\sqrt{|\xi|}}}\frac{e^u\,du}{(1+u)^\kappa}
+\int_{\log\frac{1}{\sqrt{|\xi|}}}^{\log\frac{1}{|\xi|}}
\frac{e^u\,du}{(1+u)^\kappa}\biggr)
+\int_{\log\frac{1}{|\xi|}}^\infty \frac{du}{(1+u)^\kappa}.
\end{align*}
Since
\[
e^u \le \frac{1}{\sqrt{|\xi|}} \quad \left(0 \le u \le \log \frac{1}{\sqrt{|\xi|}}\right)
\]
and
\[
\frac{1}{(1+u)^\kappa} 
\le \left(1+\log\frac{1}{\sqrt{|\xi|}}\right)^{-\kappa} 
\quad \left(\log \frac{1}{\sqrt{|\xi|}} \le u \le \log \frac{1}{{|\xi|}}\right),
\]
we obtain
\begin{align*}
|{\mathcal F}\psi(\xi)|
&\lesssim 
|\xi|\int_{0}^{1} \frac{t^n}
{1+t^n}dt+\sqrt{|\xi|}
+\left(1+\log\frac{1}{\sqrt{|\xi|}}\right)^{-\kappa} 
+\int_{\log\frac{1}{|\xi|}}^\infty \frac{du}{(1+u)^\kappa}
\\
&\lesssim \log^{1-\kappa}\left(2+\frac{1}{|\xi|}\right).
\end{align*}
Thus, the proof is complete.
\end{proof}
Let $\kappa>\frac32$.
With Lemma \ref{lem:220404-1} in mind, we consider the integral operator
\[
S_{1, \psi} f (x)=\Big(\iint_{\Gamma_1(x)}|f\star \psi_t(y)|^2
\f{dydt}{t^{n+1}}\Big)^{\frac12},
\]
which was defined in (\ref{eq:220330-2})
with $\phi$ replaced by $\psi$.

We can estimate the $L^2$-norm of $S_{1,\psi}$ with ease
by the use of the Fourier transform:
\[
\|S_{1,\psi}f\|_{L^2}
\lesssim
\|f\|_{L^2}
\sqrt{
\int_0^\infty
\min\left(\frac{1}{t^{2}},
\log^{2-2\kappa}\left(2+\frac{1}{t}\right)
\right)\frac{dt}{t}
}
\sim
\|f\|_{L^2}.
\]

We also deal with an estimate for $\log$.
\begin{lemma} \label{memo-star}
Let $h,x \in \Rn$.
Assume $|h| \leq \frac{|x|}{2}$. 
Then
for any $0<\gamma \leq 1$, we have
$$
\frac{\min \Big(1, |h|^\gamma \Big)}{\log (2+|x|)}\log \Big(2+\frac{1+|x|}{|h|} \Big) \lesssim_\gamma 1.
$$
That is, there exists a constant $C_\gamma$ which depends only on $\gamma$ such that
$$
\frac{\min \Big(1, |h|^\gamma \Big)}{\log (2+|x|)}\log \Big(2+\frac{1+|x|}{|h|} \Big) \le C_\gamma.
$$
\end{lemma}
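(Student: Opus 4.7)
The plan is to split the argument into two cases according to whether $|h| \ge 1$ or $|h| < 1$, since the factor $\min(1,|h|^\gamma)$ behaves very differently in these two regimes.

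In the first case $|h| \ge 1$, we have $\min(1,|h|^\gamma) = 1$, and since $(1+|x|)/|h| \le 1+|x|$, the numerator factor satisfies $\log(2+(1+|x|)/|h|) \le \log(3+|x|)$, which is clearly bounded by $2\log(2+|x|)$. The claim is immediate in this case with no use of the hypothesis $|h| \le |x|/2$.

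In the second case $|h| < 1$, I would use the elementary algebraic splitting
\[
\log\Bigl(2+\frac{1+|x|}{|h|}\Bigr) \le \log\Bigl(\frac{2|h|+1+|x|}{|h|}\Bigr) = \log(2|h|+1+|x|) + \log\frac{1}{|h|}.
\]
The hypothesis $2|h| \le |x|$ then yields $\log(2|h|+1+|x|) \le \log(1+2|x|) \lesssim \log(2+|x|)$, so the whole estimate reduces to proving
\[
\frac{|h|^\gamma \log(1/|h|)}{\log(2+|x|)} \lesssim_\gamma 1.
\]
For this I would invoke the elementary calculus bound $\sup_{0 < s \le 1} s^\gamma \log(1/s) = (e\gamma)^{-1}$, attained at $s = e^{-1/\gamma}$, together with the trivial lower bound $\log(2+|x|) \ge \log 2$.

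The only delicate point, which is not really an obstacle once identified, is that the denominator $\log(2+|x|)$ degenerates to the positive constant $\log 2$ as $|x| \to 0$, rather than growing. This is precisely where the hypothesis $|h| \le |x|/2$ does the work in the case $|h|<1$: it forces $|h|$ to be small whenever $|x|$ is small, so that the numerator factor $|h|^\gamma \log(1/|h|)$ is simultaneously driven to $0$, keeping the overall ratio bounded uniformly in $\gamma$.
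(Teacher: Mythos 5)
Your proof is correct and follows essentially the same structure as the paper's: split on whether $|h|$ is small or large, then in the small-$|h|$ regime reduce everything to the elementary calculus fact that $s^\gamma\log(1/s)$ is bounded on $(0,1]$ by a constant depending only on $\gamma$. The one technical difference is in how you treat $\log\bigl(\tfrac{2|h|+1+|x|}{|h|}\bigr)$: you use plain additivity $\log(a/b)=\log a+\log\tfrac1b$ and bound the two summands separately, whereas the paper instead applies the sub-multiplicative inequality $\log a+\log b\le\tfrac{2}{\log 2}\log a\cdot\log b$ (valid once both logs exceed $\log 2$) and then absorbs $\log\tfrac1{|h|}$ into $|h|^{-\gamma}$ in one step. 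Your additive splitting avoids the need to check that both logarithms are bounded below, so it is marginally cleaner, but the two arguments are interchangeable.

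One small inaccuracy in your closing commentary: the hypothesis $|h|\le\tfrac{|x|}{2}$ is not actually what saves you in the case $|h|<1$. The bound $\log(2|h|+1+|x|)\lesssim\log(2+|x|)$ already follows from $|h|<1$ alone, since $2|h|+1+|x|<3+|x|\le(2+|x|)^2$. And the second term $|h|^\gamma\log(1/|h|)/\log(2+|x|)$ is handled entirely by the calculus bound and $\log(2+|x|)\ge\log 2$, with no coupling between $|h|$ and $|x|$ needed. In fact the assumption $|h|\le|x|/2$ is not used in an essential way in either your proof or the paper's; it is carried along because in the application (the smoothness estimates for $\psi$) one is always in that regime anyway.
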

\begin{proof}We distinguish two cases.\\
\textbf{Case 1.} Let $|h| \leq \frac{1}{2}$. In this case $\log \frac{1}{|h|} \geq \log 2$ and 
\begin{align*}
\log \Big(2+\frac{1+|x|}{|h|} \Big) &\leq \log \Big(\frac{2+|h|+|x|}{|h|} \Big) \leq \frac{2}{\log 2} \log \Big( 2+|h|+|x| \Big) \log \frac{1}{|h|}\\
&\leq \frac{2}{\log 2} \log \Big( 2+|h|+|x| \Big)\frac{1}{\gamma}\cdot
\frac{1}{|h|^\gamma} \lesssim_\gamma \frac{\log (2+|x|)}{|h|^\gamma}.
\end{align*}
\textbf{Case 2.} Let $|h| \geq \frac{1}{2}$. Then
$$
\log \Big(2+\frac{1+|x|}{|h|} \Big) \leq 2 \log (2+ |x|). 
$$
So,
we obtain the desired result. 
\end{proof}

We now suppose that $\kappa>2$.
Let us verify that
(\ref{eq-sizecondition}),
(\ref{eq-smoothcondition1})
and
(\ref{eq-smoothcondition2})
hold
for
\begin{equation}\label{eq:220404-111}
w(t)=\varphi(t):=\log^{-\frac{\kappa}{2}}\left(2+\frac{1}{\min(1,t)}\right).
\end{equation}
It is noteworthy that $w$ fails the $\log$-Dini condition
if $2\le \kappa\le 4$.
Condition (\ref{eq-sizecondition})
is easy to check since $|\sin x_1| \le 1$ for all $x_1 \in {\mathbb R}$
and $\kappa>2$.

Since
\[
|\nabla \psi(x+h)| \lesssim 
\frac{1}{(1+|x|^2)^{\frac{n}{2}}\log^{\kappa}(2+|x|^2)}
\]
and
\[
\frac{|h|}{\log(2+|x|)} \lesssim \frac{1}{\log\left(2+\frac{1+|x|}{|h|}\right)}
\]
if $|h|<\frac{|x|}{2}$
(see Lemma \ref{memo-star}),
(\ref{eq-smoothcondition1})
and
(\ref{eq-smoothcondition2})
are satisfied.

\subsection{Example 2.}
The next example illustrates that
the product type Dini condition is useful.
Let $\kappa>1$ and $\beta \in {\mathbb R}$ satisfy $\beta-\kappa<0$.
We set
\[
\psi(x)=\psi(x_1,x_2,\ldots,x_n)
:=
\frac{\sin x_1}{(1+|x|^2)^{\frac{n}{2}}\log^{\kappa}(2+|x|^2)}
\quad (x \in {\mathbb R}^n),
\]
and
 \[
w(t):=\log^{\beta-\kappa}\Big( 1+\frac1{\min(1,t)} \Big).
\quad \varphi(t):=\log^{-\beta} \Big(1+\frac1{\min(1,t)} \Big)
\]
and before.

Let $\kappa>2$, $\beta>1$
satisfy $\kappa-\beta>1$.
 The functions $\psi$, $w$ and $\varphi$ enjoy the following properties:
 \begin{itemize}
 \item[{(a)}] Both $w$ and $\varphi$ are moduli of continuity satisfying
 the Dini condition.
 \item[{(b)}] We have
 $\|S_{1,\psi}f\|_{L^2} \lesssim
\|f\|_{L^2}$
for all $f \in L^2({\mathbb R}^n)$
 \item[{(c)}] We have
 \[ |\psi(x)| \lesssim {\mathcal M}1_{Q(0,1)}(x) w\Big( \frac{1}{ 1+|x|}\Big)
 \quad (x \in {\mathbb R}^n).
 \]
 Thus,
 (\ref{eq-sizecondition})
is satisfied.
 \item[{(d)}] For $x, h \in \Rn$ with $|h| \leq \frac{|x|}{2}$, we have 
 \[ |\psi(x+h)-\psi(x)| \lesssim {\mathcal M}1_{Q(0,1)}(x) w\Big( \frac{1}{ 1+|x|}\Big)\varphi \Big( \frac{|h|}{ 1+|x|}\Big).
 \]
 Thus,
(\ref{eq-smoothcondition1})
and
(\ref{eq-smoothcondition2})
are satisfied.
 \end{itemize}

\begin{proof} The proof of (a) and (c) is straightforward. Item (b) is proved in Example 1. We now focus on item (d). Set
\[
g(x)=g(x_1,x_2,\ldots,x_n)
=
\frac{1}{(1+|x|^2)^{\frac{n}{2}}\log^{\kappa}(2+|x|^2)}
\quad (x \in {\mathbb R}^n).
\]
Then 
\begin{align*}
|\psi(x+h)-\psi(x)| &= \Big( \sin(x_1+h_1)-\sin(x_1)\Big) g(x)+\sin(x_1+h_1)\Big(g(x+h)-g(x) \Big)\\
&=:I_1(x,h)+I_2(x,h).
\end{align*}
By Lemma \ref{memo-star}
with $\gamma=\frac{1}{\beta}$, we have
\begin{align*}
|I_1(x,h)| &\lesssim \min (1,|h|) {\mathcal M}1_{Q(0,1)}(x)\log^{-\kappa}(2+|x|)\\
&\lesssim 
{\mathcal M}1_{Q(0,1)}(x) 
\log^{\beta-\kappa} (2+|x| )
\log^{-\beta} \Big( 2+\frac{1+|x|}{h} \Big) \\
&= {\mathcal M}1_{Q(0,1)}(x)w\Big( \frac{1}{ 1+|x|}\Big)\varphi \Big( \frac{|h|}{ 1+|x|}\Big).
\end{align*}

We will estimate $I_2(x,h)$.
We write
\begin{align*}
J_1(x,h)&:= \Big(\log(2+|x-h|^2) \Big)^{-\kappa}\Big|
\Big(1+|x+h|^2\Big)^{-\frac{n}{2}} 
-
\Big(1+|x|^2\Big)^{-\frac{n}{2}}
 \Big|\\
J_2(x,h)&:=\Big(1+|x|^2\Big)^{-\frac{n}{2}} 
 \Big| \Big(\log(2+|x+h|^2) \Big)^{-\kappa}
 - \Big(\log(2+|x|^2) \Big)^{-\kappa} 
 \Big|.
\end{align*}
Then
\begin{align*}
|I_2(x,h)|
&\leq |g(x+h)-g(x)|
\le
J_1(x,h)+J_2(x,h).
\end{align*}

Recall that $2|h| \le |x|$.
So, we have
\begin{align*}
J_1(x,h)
&\lesssim \min (1,|h|) {\mathcal M}1_{Q(0,1)}(x)\log^{-\kappa}(2+|x|)\\
J_2(x,h) &\lesssim\min (1,|h|) {\mathcal M}1_{Q(0,1)}(x)\log^{-\kappa}(2+|x|).
\end{align*}
by the mean value theorem.
Thus, in a similar way to the calculation of $I_1(x,h)$, 
the same bound can be obtained for $J_1(x,h)$ and $J_2(x,h)$. 
The proof of item (d)
is therefore complete. 
\end{proof}

\subsection{Example 3.} 
Let
$w$ and $\varphi$ be given by
(\ref{eq:220404-111}).
Also let $\kappa>2$.
A similar observation as in Example 1 shows that
\[
\psi(x)
:=
\frac{\partial}{\partial x_1}
(1+|x|^2)^{-\frac{n-1}{2}}\log^{-\kappa}(2+|x|^2)\quad (x \in {\mathbb R}^n)
\]
satisfies
(\ref{eq-sizecondition}),
(\ref{eq-smoothcondition1})
and
(\ref{eq-smoothcondition2}).

\section{Concluding remarks}
\label{sect5}
 
Recall that the Marcinkiewicz function was a key tool
in the paper \cite{SXY}. It seems
useful to refine the boundedness property
in the spirit of this paper.
In connection with the Dini condition,
the following general estimate seems of use for further work:
 \begin{remark}
 Let $w$ be a modulus of continuity. 
Let
$$F_{w}(x)=\sum_{k=1}^\infty \lambda_k{\mathcal M}1_{Q(c_k,r_k)}(x) w\left(\frac{r_k}{r_k+|x-c_k|} \right)
\quad (x \in {\mathbb R}^n) $$
be 
the generalized Marcinkiewicz function corresponding to the cubes
$\{ Q_k\}_{k=1}^\infty$ with center at $c_k$ and 
side-length $r_k$. 
Here each $\lambda_k$ is a positive constant.
We claim that if $w$ satisfies
the Dini condition and $1<q<\infty$, then 
$$
\int_{{\mathbb R}^n} F_w(x)^q dx \lesssim \sum_{k=1}^\infty \lambda_k{}^q|Q_k|
$$
where $\{ Q_k\}_{k=1}^\infty$ 
is a collection of disjoint cubes in ${\mathbb R}^n$ with $k \in {\mathbb N}$.

In fact,
we choose a non-negative function $g \in L^{q'}({\mathbb R}^n)$ with
$\|g\|_{L^{q'}} \le 1$ and we establish
$$
\int_{{\mathbb R}^n} F_w(x) g(x) dx \lesssim \sum_{k=1}^\infty \lambda_k{}^q|Q_k|
$$
We note that
\begin{eqnarray*}
\lefteqn{
\int_{{\mathbb R}^n} {\mathcal M}1_{Q(c_k,r_k)}(x) w\left(\frac{r_k}{r_k+|x-c_k|} \right) g(x) dx
}\\
&\lesssim& w(1)
\fint_{B(c_k,r_k)}g(x)dx
+
\sum_{l=1}^\infty
w(2^{-l})
\fint_{Q(c_k,2^l r_k)}g(x)dx\\
&\lesssim& 
\fint_{Q(c_k,r_k)}g(x)dx
+
\sum_{l=1}^\infty
w(2^{-l})
\fint_{Q(c_k,2^l r_k)}g(x)dx\\
&\lesssim& 
|B(c_k,r_k)|\inf_{Q(c_k,r_k)}{\mathcal M}g(y)
\end{eqnarray*}
Thus,
\[
\int_{{\mathbb R}^n} F_w(x) g(x) dx
\lesssim
\int_{{\mathbb R}^n} \sum_{k=1}^\infty \lambda_k 1_{Q(c_k,r_k)}(x) {\mathcal M}g(x) dx.
\]
If we use H\"{o}lder's inequality and then use the $L^{q'}$-boundedness
of ${\mathcal M}$,
then 
\begin{align*}
\int_{{\mathbb R}^n} F_w(x) g(x) dx
&\lesssim
\int_{{\mathbb R}^n} \sum_{k=1}^\infty \lambda_k 1_{B(c_k,r_k)}(x) {\mathcal M}g(x) dx\\
&\lesssim
\left\|\sum_{k=1}^\infty \lambda_k 1_{B(c_k,r_k)}\right\|_{L^q}
\|{\mathcal M}g\|_{L^{q'}}\\
&\lesssim
\left(\sum_{k=1}^\infty \lambda_k{}^q|Q_k|\right)^{\frac1q}
\|g\|_{L^{q'}},
\end{align*}
as required.
This leads to a generalization of some results in \cite{FStein}.
\end{remark}

\section*{Acknowledgements}

 The authors are grateful to the anonymous referees for their careful reading of the paper and for making several valuable comments which have improved the quality of this paper.

\end{document}